\newtheorem{theorem}{Theorem}[section]
\newtheorem{lem}[theorem]{Lemma}
\newtheorem{cor}[theorem]{Corollary}
\newtheorem{prop}[theorem]{Proposition}
\newtheorem{definition}[theorem]{Definition}
\newtheorem{example}[theorem]{Example}
\newtheorem{remark}[theorem]{Remark}
\numberwithin{equation}{section}
\begin{document}

\title[Embedding of Sobolev spaces into BMO]{On some refinements of the embedding of critical Sobolev spaces into BMO}
\author[A. Butaev]{Almaz Butaev}

\begin{abstract}

We introduce the non-homogeneous analogs of function spaces studied by Van Schaftingen’s. We show that these classes refine the embedding $W^{1,n}(\mathbb{R}^n)\subset bmo(\mathbb{R}^n)$. The analogous results established on bounded Lipschitz domains and Riemannian manifolds with bounded geometry.

\end{abstract}

\maketitle

\section{Introduction}
Let $f$ be a locally integrable function on $\mathbb{R}^n$. Given a cube $Q\subset \mathbb{R}^n$ (henceforth by a cube we will understand a cube with sides parallel to the axes), we denote the average of $f$ over $Q$ by $f_Q$, i.e.
\[
f_Q = \frac{1}{|Q|} \int_Q f(x) dx,
\]
where $|Q|$ is the Lebesgue measure of $Q$.

In 1961 John and Nirenberg introduced the space of functions of bounded mean oscillation ($\rm{BMO})$.
\begin{definition}
We say that $f\in \rm{BMO}(\mathbb{R}^n)$ if
\[
\|f\|_{\rm{BMO}}:= \sup_{Q} \frac{1}{|Q|}\int_Q |f(x)-f_Q| dx < \infty.
\]
Note that $\|\cdot\|_{\rm{BMO}}$ is a norm on the quotient space of functions modulo constants.
\end{definition}

Functions of bounded mean oscillations turned out to be the right substitute for $L^\infty$ functions in a number of questions in analysis. In particular, the embedding theorem of Gagliardo-Nirenberg-Sobolev (see e.g.\ \cite{stein2016singular}, Chapter V) asserts that for any $p\in[1,n)$ there exists $C_p$ such that
\[
\|f\|_{L^{np/(n-p)}} \leq C_p \|\nabla f\|_{L^p}, \forall f\in \mathcal{D}.
\]
The inequality fails for $p=n$, so we do not have the embedding $W^{1,n}$ into $L^\infty$. However, it follows from the Poincare inequality that for some constant $C>0$,
\[
\|f\|_{\rm{BMO}} \leq C \|\nabla f\|_{L^n}, \forall f\in \mathcal{D}
\]
and therefore $\mathring{W}^{1,n}$ is continuously embedded into $\rm{BMO}(\mathbb{R}^n)$.

Based on one inequality established by Bourgain and Brezis in \cite{BB_New_estimates_Laplacian_Hodge_systems}, Van Schaftingen \cite{Schaftingen_function_spaces_BMO_Sobolev} defined a scale of spaces $D_k$ using the $k$-differential forms
\[
\Phi(x) = \sum_{1\leq i_1<\dots<i_k\leq n} \phi_{i_1,\dots i_k}(x) dx^{i_1}\wedge \dots \wedge dx^{i_k}
\]
as follows
\begin{definition}
For $1\leq k \leq n$, $D_k$ is defined as
\[
D_{k}(\mathbb{R}^n) = \{u \in \mathcal{D}'(\mathbb{R}^n): \|u\|_{D_{k}}<\infty \},
\]
where
\[
\|u\|_{D_{k}} := \sup\{|u(\phi_{i_1,\dots,i_k})|: \Phi \in  \mathcal{D}(\mathbb{R}^n;\Lambda^k(\mathbb{R}^n)),\  d\Phi=0, \ \|\Phi\|_{L^1} \leq 1\}.
\]
\end{definition}
It was shown in \cite{Schaftingen_function_spaces_BMO_Sobolev} that the $D_k$ classes lie strictly between the critical Sobolev spaces and $\rm{BMO}(\mathbb{R}^n)$, refining the classical embedding $\mathring{W}^{1,n} \subset \rm{BMO}$. More precisely, the following proper inclusions are continuous
\[
\mathring{W}^{1,n} \subset D_{n-1} \subset \dots \subset D_1 \subset \rm{BMO}.
\]

From the point of view of some applications to PDEs, as function spaces $D_k$ ($k<n$) lack certain  ``useful" properties: multiplications by smooth cut-off functions are not necessarily bounded operators on $D_k$ and $D_k$ are not invariant under all smooth changes of variables.

In this paper, we introduce the non-homogeneous analogs of Van Schaftingen's classes $D_k$, which we denote by $d^k(\mathbb{R}^n)$.
\begin{definition}
Let $1\leq k \leq n$. We say that $u\in \mathcal{D}'(\mathbb{R}^n)$ belongs to $d^k(\mathbb{R}^n)$ if
\begin{equation}
\label{d_k_def}
 \sup_{\|\Phi\|_{\Upsilon^1_k(\mathbb{R}^n)}\leq 1} \max_{I} |u(\phi_I)| <\infty,
\end{equation}
where the supremum is taken over all $k$-differential forms $\Phi = \sum_{I} \phi_I dx^I$, $\phi_I \in \mathcal{D}(\mathbb{R}^n)$ and $\|\Phi\|_{\Upsilon^1_k} = \|\Phi\|_{L^1} + \|d \Phi\|_{L^1}$. We will denote this supremum by $\|u\|_{d^k}$.
\end{definition}

It is useful to compare the defined classes $d^k(\mathbb{R}^n)$ with $D_k(\mathbb{R}^n)$. First of all, $d^k(\mathbb{R}^n) \subset D_k(\mathbb{R}^n), \ k=1,2,\dots n$ as sets. As Banach spaces $D_k(\mathbb{R}^n)$ are classes of functions modulo constants, while in $d^k(\mathbb{R}^n)$ two functions that differ by a non-zero constant are considered as different elements.

In contrast to $D_k$ spaces, the smooth change of variables and multiplications by cut-off functions are invariant operations on $d^k$. In particular, this allows to define $d^k$ on certain Riemannian manifolds.
In Section 2, we recall some facts from the theory of local Hardy spaces, which will be used later.
In Section 3, we prove the following theorem
\begin{theorem}
\label{main_thm}
$d^1(\mathbb{R}^n)$ is continuously embedded into the space $bmo(\mathbb{R}^n)$ and $\exists C>0$ so that for any $u\in d^k(\mathbb{R}^n)$, $1\leq k\leq n$
\[
\|u\|_{bmo} \leq C \|u\|_{d^k}.
\]
\end{theorem}
Combining this theorem with the result of Van Schaftingen \cite{Schaftingen2004181}, it shows that the $d^k$ classes refine the embedding $W^{1,n}(\mathbb{R}^n)\subset bmo(\mathbb{R}^n)$, where $bmo$ is the local BMO space of Goldberg \cite{Goldberg} in the sense that
\[
W^{1,n}\subset d^{n-1} \subset \dots \subset d^1 \subset bmo.
\]
We also prove that continuous $d^{n-1}$ functions can be characterized in terms of line integrals, similarly to the inequality of  Bourgain, Brezis and Mironescu \cite{bourgain2004h}
\begin{theorem}
Let $u\in \mathcal{D}(\mathbb{R}^n)$. Then $u\in d^{n-1}(\mathbb{R}^n)$ if and only if
\[
\sup\limits_{\partial \gamma=\emptyset}\frac{1}{|\gamma|} \left| \int_\gamma u(t) \tau(t) dt \right| + \sup\limits_{|\gamma|\geq 1}\frac{1}{|\gamma|} \left| \int_\gamma u(t) \tau(t) dt \right|<\infty,
\]
where the suprema are taken over smooth curves $\gamma$ with finite lengths $|\gamma|$, boundaries $\partial \gamma$ and unit tangent vectors $\tau$.
\end{theorem}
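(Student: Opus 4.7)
My plan is to prove the two implications separately, with the converse direction relying on Smirnov's decomposition of normal $1$-currents.

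For the forward implication, given a smooth curve $\gamma$ of length $L$ with unit tangent $\tau$, I would consider the $\mathbb{R}^n$-valued measure $\mu_\gamma := \tau\,\mathcal{H}^1\lfloor\gamma$ and mollify it to $\vec F_\varepsilon := \mu_\gamma * \rho_\varepsilon$, letting the components of $\vec F_\varepsilon$ play the role of the coefficients $\phi_I$ of a smooth compactly supported $(n-1)$-form $\Phi_\varepsilon$. Then $\|\Phi_\varepsilon\|_{L^1}\leq L$, and under Hodge duality $d\Phi_\varepsilon$ corresponds to $\operatorname{div}\vec F_\varepsilon=(\partial\mu_\gamma)*\rho_\varepsilon$, where $\partial\mu_\gamma$ equals $\delta_q-\delta_p$ for $\gamma$ running from $p$ to $q$ and vanishes when $\partial\gamma=\emptyset$; thus $\|d\Phi_\varepsilon\|_{L^1}\leq 2$ or $0$. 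Consequently $\|\Phi_\varepsilon\|_{\Upsilon^1_{n-1}}\leq L$ for closed curves and $\leq L+2\leq 3L$ whenever $L\geq 1$. Applying the defining bound for $\|u\|_{d^{n-1}}$ component-wise and passing to the limit $\varepsilon\to 0^+$ (justified since $u\in\mathcal{D}$ and $u*\tilde\rho_\varepsilon\to u$ uniformly) would give $\tfrac1L|\int_\gamma u\tau\,dt|\leq \|u\|_{d^{n-1}}$ for closed $\gamma$ and $\leq 3\|u\|_{d^{n-1}}$ when $L\geq 1$.

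For the converse, let $A$ and $B$ denote the two suprema in the hypothesis and fix a test form $\Phi$ with $\|\Phi\|_{\Upsilon^1_{n-1}}\leq 1$, which I would identify with a vector field $\vec F$ satisfying $\|\vec F\|_{L^1}+\|\operatorname{div}\vec F\|_{L^1}\leq 1$. The associated normal $1$-current $T_F$ admits a Smirnov decomposition $T_F=\int[[\gamma]]\,d\mu(\gamma)$ over simple Lipschitz curves, splitting as $\mu=\mu_c+\mu_{ac}$ into cyclic and acyclic parts with $\int|\gamma|\,d\mu=\|\vec F\|_{L^1}$ and total acyclic mass $\mu_{ac}(\cdot)\leq\tfrac12\|\operatorname{div}\vec F\|_{L^1}$ (using that each simple acyclic curve has boundary mass $2$). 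By Fubini,
\[
|u(F_j)|\leq\int\left|\int_\gamma u\tau\,dt\right|d\mu(\gamma),
\]
and I would split this integral according to three regions: cyclic, acyclic with $|\gamma|\geq 1$, and acyclic with $|\gamma|<1$. The first two contribute at most $A\|\vec F\|_{L^1}$ and $B\|\vec F\|_{L^1}$ respectively, directly from the hypothesis.

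The main obstacle is the third region, short non-closed curves, for which neither supremum applies directly. My plan is an extension trick: for each acyclic $\gamma$ with $|\gamma|<1$ joining $p$ to $q$, measurably select a smooth path $\sigma_\gamma$ from $p$ to $q$ of length in $[1,2]$, so that $\gamma-\sigma_\gamma$ is a piecewise-smooth closed curve of length at most $3$. After smoothing the corners at $p,q$ (introducing arbitrarily small error), both the extended path and the closed loop fall under the hypothesis, yielding
\[
\left|\int_\gamma u\tau\,dt\right|\leq \left|\int_{\sigma_\gamma}u\tau\,dt\right|+\left|\int_{\gamma-\sigma_\gamma}u\tau\,dt\right|\leq 2B+3A,
\]
a uniform bound. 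Integrating this against $\mu_{ac}$ restricted to $\{|\gamma|<1\}$ contributes at most $\tfrac{2B+3A}{2}\|\operatorname{div}\vec F\|_{L^1}$. Summing the three estimates gives $|u(F_j)|\leq C(A,B)(\|\vec F\|_{L^1}+\|\operatorname{div}\vec F\|_{L^1})\leq C(A,B)$, whence $\|u\|_{d^{n-1}}\leq C(A,B)$. The remaining technicalities are the measurable selection of $\sigma_\gamma$ as a function of $\gamma$ and the approximation of Smirnov's Lipschitz curves by smooth ones in order to invoke the line-integral hypothesis.
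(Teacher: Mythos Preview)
Your proposal is correct and follows essentially the same approach as the paper: mollification of the curve measure for the forward bound, and Smirnov's decomposition together with the extension trick (closing a short open curve by appending a path of length in $[1,2]$) for the converse. The only organizational difference is that the paper introduces an intermediate norm $\|u\|_{**}=\|u\|_*+\sup_{|\gamma|<1}|\int_\gamma u\tau\,ds|$ and uses the sequential (weak-limit) formulation of Smirnov's theorem rather than the integral one, which sidesteps your measurable-selection concern---though that concern is in fact moot for you as well, since you only need the pointwise bound $|\int_\gamma u\tau|\le 3A+2B$ and then integrate this constant against $\mu_{ac}$.
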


As an application of $d^k$ classes for PDEs, the following fact
is established
\begin{theorem}
Let $n\geq 2$, $F\in L^1(\mathbb{R}^n;\mathbb{R}^n)$ and $div F\in L^1(\mathbb{R}^n)$. Then the system $(I-\Delta) U = F$ admits a unique solution $U$ such that
\begin{itemize}
    \item If $n=2$, then
    \[
    \|U\|_{\infty} + \|\nabla U\|_{2} \leq C (\|F\|_1 + \|div F\|_1)
    \]
    \item If $n\geq 3$, then
    \[
    \|U\|_{n/(n-2)} + \|\nabla U\|_{n/n-1} \leq C (\|F\|_1 + \|div F\|_1)
    \]
\end{itemize}

\end{theorem}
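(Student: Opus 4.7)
The plan is to establish existence and uniqueness via Bessel potential theory, and to prove the a priori estimates by a duality argument based on the embedding chain $W^{1,n}(\mathbb{R}^n) \subset d^{n-1}(\mathbb{R}^n) \subset \cdots \subset d^1(\mathbb{R}^n) \subset bmo(\mathbb{R}^n)$ of Theorem \ref{main_thm}. Uniqueness of the tempered solution is immediate since $1+|\xi|^2$ is non-vanishing, and existence is given, for $F \in \mathcal{D}(\mathbb{R}^n;\mathbb{R}^n)$ (to which we reduce by density), by $U = G_2 \ast F$ with $G_2$ the Bessel kernel. The observation that activates the $d^k$-machinery is that the Hodge dual $\Psi = \ast F$ is an $(n-1)$-form satisfying $d\Psi = (\operatorname{div} F)\, dx^1 \wedge \cdots \wedge dx^n$, so
\[
\|\Psi\|_{\Upsilon^1_{n-1}} \simeq \|F\|_{L^1} + \|\operatorname{div} F\|_{L^1}.
\]
Directly from the definition of $d^{n-1}$, this produces the workhorse inequality
\[
|u(F_i)| \leq C\, \|u\|_{d^{n-1}}\bigl(\|F\|_{L^1} + \|\operatorname{div} F\|_{L^1}\bigr), \quad i = 1,\dots, n,
\]
for every $u \in d^{n-1}(\mathbb{R}^n)$.

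For the gradient bound, by duality it suffices to estimate $\int \nabla U : G\, dx$ over smooth matrix fields $G$ with $\|G\|_{L^{q'}} \leq 1$. Integration by parts and the equation give
\[
\int \nabla U : G \, dx = -\int U \cdot \operatorname{div} G \, dx = -\int F \cdot v \, dx, \quad v = \operatorname{div}\bigl((I-\Delta)^{-1} G\bigr),
\]
and the two-derivative smoothing of $(I-\Delta)^{-1}$ places $v$ in $W^{1,n}(\mathbb{R}^n)$ uniformly in $G$ (valid in both the $q'=n$ case for $n \geq 3$ and the $q'=2$ case for $n = 2$). Each component $v_i$ then lies in $d^{n-1}$ with bounded norm via $W^{1,n} \subset d^{n-1}$, and the workhorse inequality yields the desired estimate. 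For the $L^p$ bound in $n \geq 3$, the same argument applies with a test $h \in L^{n/2}$, using $(I-\Delta)^{-1}: L^{n/2} \to W^{2,n/2} \hookrightarrow W^{1,n} \subset d^{n-1}$.

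The main obstacle is the $n = 2$, $p = \infty$ case, because $(I-\Delta)^{-1}$ does not map $L^1(\mathbb{R}^2)$ into $W^{1,2}(\mathbb{R}^2)$ (a borderline Sobolev failure), so testing against $h \in L^1$ alone cannot reach $d^1$ via the $W^{1,2}$-route. The remedy is to couple the $L^\infty$ and $W^{1,2}$ bounds into a single pairing: test simultaneously against a vector field $h \in L^1$ and a matrix field $G \in L^2$, and extract $W^{1,2}$-regularity for $v = (I-\Delta)^{-1}(h - \operatorname{div} G)$ from the combined data $h - \operatorname{div} G \in L^1 + W^{-1,2}(\mathbb{R}^2)$. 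Equivalently, one may verify that $(I-\Delta)^{-1}: L^1(\mathbb{R}^2) \to d^1(\mathbb{R}^2)$ is bounded directly, bypassing the $W^{1,2}$ intermediary; either route requires a careful analysis of the borderline behaviour of the Bessel kernel, and this constitutes the technical heart of the $n = 2$ part of the proof.
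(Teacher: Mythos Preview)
Your duality scheme is a valid alternative, and for $n\ge 3$ it is a genuinely different route from the paper's argument. The paper does not dualize there: it rewrites $(I-\Delta)U=F$ as $\Delta U=\frac{\Delta}{I-\Delta}F=:\tilde F$, observes that $\frac{\Delta}{I-\Delta}$ is convolution with a finite measure (so $\tilde F,\operatorname{div}\tilde F\in L^1$ with controlled norms), invokes the Bourgain--Brezis estimate for $\Delta U=\tilde F$ to bound $\|\nabla U\|_{n/(n-1)}$, and finishes with Sobolev embedding. Your approach is more self-contained within the $d^k$-framework and avoids the external citation; the paper's reduction is shorter once Bourgain--Brezis is granted.

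For $n=2$ your diagnosis of the obstacle is correct, but Route~1 does not resolve it: the $L^1$ piece of $h-\operatorname{div}G$ still suffers the same borderline failure $(I-\Delta)^{-1}:L^1(\mathbb{R}^2)\not\to W^{1,2}(\mathbb{R}^2)$, and coupling with the $W^{-1,2}$ piece does not help that term. Route~2 is the right one, and it is essentially the paper's approach with the duality wrapper removed. The paper shows directly that $G_2\in d^{1}(\mathbb{R}^2)$ (via the pointwise criterion $|x|(|u|+|\nabla u|)\in L^\infty$ together with the standard asymptotics of the Bessel kernel), and then reads off
\[
|U_i(x)|=\bigl|\langle G_2(x-\cdot),F_i\rangle\bigr|\le \|G_2\|_{d^1}\bigl(\|F\|_1+\|\operatorname{div}F\|_1\bigr)
\]
pointwise in $x$. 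Your statement ``$(I-\Delta)^{-1}:L^1\to d^1$ bounded'' is equivalent to $G_2\in d^1$ by translation invariance, so it reduces to the same kernel computation; the pointwise formulation simply bypasses the duality detour. For $\|\nabla U\|_2$ the paper also avoids duality, using the energy identity $\int|\nabla U_i|^2=\int U_iF_i-\int U_i^2$ together with the $L^\infty$ bound just obtained.
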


In Section 4, we introduce the localized versions of $d^k$ spaces on bounded Lipschitz domains $\Omega$.
The main result of Section 4 is the proof of the following fact, which was conjectured by Van Schaftingen \cite{Schaftingen_function_spaces_BMO_Sobolev} for the bmo spaces on domains (see Definition \ref{def_bmo_r} below).

\begin{theorem}
Any $u\in d^1(\Omega)$ is a $bmo_r(\Omega)$ function as there exists $C>0$ such that
\[
\|u\|_{bmo_r(\Omega)} \leq C \|u\|_{d^1(\Omega)} \  \forall u\in d^1(\Omega).
\]
\end{theorem}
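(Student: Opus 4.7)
The plan is to reduce the statement to the already-established Euclidean version, Theorem~\ref{main_thm}, by constructing for every $u \in d^1(\Omega)$ an extension $\widetilde u \in d^1(\mathbb{R}^n)$ whose norm is controlled by $\|u\|_{d^1(\Omega)}$. Once such an extension is in hand, Theorem~\ref{main_thm} places $\widetilde u$ in $bmo(\mathbb{R}^n)$, and (assuming, as the notation suggests, that $bmo_r(\Omega)$ coincides with the restriction of $bmo(\mathbb{R}^n)$ to $\Omega$) restricting yields $u \in bmo_r(\Omega)$ with the desired norm inequality. The invariance properties of $d^1$ singled out in the introduction --- stability under multiplication by smooth cut-offs and under smooth changes of variables --- are precisely what make this extension strategy feasible.

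I would first localize via a finite open cover $\{U_j\}_{j=0}^N$ of $\overline\Omega$ with $U_0 \Subset \Omega$ and, for $j \geq 1$, each $U_j$ a chart in which $U_j \cap \Omega$ is the subgraph of a Lipschitz function, together with a subordinate smooth partition of unity $\{\chi_j\}$. Each $\chi_j u$ belongs to $d^1(\Omega)$ with norm $\lesssim \|u\|_{d^1(\Omega)}$ by the cut-off stability. For the interior piece $\chi_0 u$, which is compactly supported in $\Omega$, extension by zero is immediate: testing with a form $\Phi$ on $\mathbb{R}^n$ reduces to testing $\chi_0 u$ with $\eta \Phi$ for a fixed cut-off $\eta$ supported in $\Omega$ and equal to $1$ on $\operatorname{supp}\chi_0$, and multiplication by $\eta$ is bounded on $\|\cdot\|_{\Upsilon^1_1}$ by the Leibniz rule.

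For a boundary piece $\chi_j u$ with $j \geq 1$, I would pull back via a straightening map $\Psi_j$ of $\partial\Omega \cap U_j$ to a compactly supported distribution $v_j \in d^1(B^+)$ on an upper half-ball, and then extend $v_j$ across $\{x_n = 0\}$ via the form-adapted reflection familiar from Hodge theory on manifolds with boundary: given a test 1-form $\Phi = \sum_{i=1}^n \phi_i\, dx^i$ on the full ball $B$, fold the tangential components $\phi_i$ ($i<n$) by even reflection in $x_n$ and the normal component $\phi_n$ by odd reflection, then pair $v_j$ with the resulting form on $B^+$. The tangential-even/normal-odd parity is precisely the one under which $d$ commutes with reflection in $L^1$ and no singular contribution appears at $\{x_n = 0\}$, and each folded component satisfies $\|\cdot\|_{\Upsilon^1_1} \lesssim \|\Phi\|_{\Upsilon^1_1}$. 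Pushing the resulting extension forward by the inverse straightening, multiplying by a cut-off supported in $U_j$, and extending by zero produces an element of $d^1(\mathbb{R}^n)$ agreeing with $\chi_j u$ on $U_j \cap \Omega$. Summation over $j$ yields $\widetilde u$.

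The main obstacle is two-fold. First, the straightening map is only bi-Lipschitz, so the smooth-invariance of $d^1$ recorded in the introduction must be promoted to a bi-Lipschitz invariance via a smoothing/approximation argument, with constants tracked in terms of the Lipschitz norms of $\Psi_j$ and $\Psi_j^{-1}$; one needs to verify that the pullback of a test 1-form by a smooth approximant of $\Psi_j$ remains controlled in $\Upsilon^1_1$ uniformly. Second, and more delicate, the tangential-even/normal-odd reflection prescription must be checked rigorously at the distributional level: a naive even reflection of a scalar test function would introduce a delta at $\{x_n = 0\}$ in $d\Phi$ and destroy any $L^1$ bound, and the form-reflection is engineered to cancel this jump precisely through the opposite parity of the normal component. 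Once these two technical points are in place, the theorem follows by summation and a direct application of Theorem~\ref{main_thm}.
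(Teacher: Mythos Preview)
Your strategy is genuinely different from the paper's and, as written, contains a real gap. The paper does not attempt to extend $u$ to $d^1(\mathbb{R}^n)$ at all. It argues directly via the duality $bmo_r(\Omega)=(h^1_z(\Omega))'$ (Theorem~\ref{thm_bmor_as_dual}): given $f\in\mathcal{D}(\Omega)$, write $f=g+\theta$ with $\theta=(\int f)\psi$ for a fixed $\psi\in\mathcal{D}(\Omega)$, $\int\psi=1$; the piece $\theta$ is handled trivially. The mean-zero piece $g$ lies in $H^1_z(\Omega)$, and the key step is to solve $\operatorname{div} F=g$ with $F\in W^{1,1}_0(\Omega;\mathbb{R}^n)$ and $\|DF\|_{L^1}\le C\|g\|_{H^1}$, done atom by atom using the Ne\v{c}as inequality on each supporting cube (Lemmas~\ref{lem_auscher} and~\ref{lem_temp2}). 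The \emph{closed} $1$-forms $\Phi^j=\sum_i\partial_iF_j\,dx^i$ then lie in $\Upsilon^1_{1,0}(\Omega)$ and yield $|u(g)|\le C\|u\|_{d^1(\Omega)}\|f\|_{h^1}$. No reflection, straightening, or extension across $\partial\Omega$ is involved.

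The gap in your reflection step is this. By Remark~\ref{rem_d_local_def}, $v\in d^1(\mathbb{R}^n_+)$ may only be paired with components of forms in $\Upsilon^1_{1,0}(\mathbb{R}^n_+)$, the $\Upsilon^1_1$-closure of $\mathcal{D}^1(\mathbb{R}^n_+)$. Your tangential-even/normal-odd fold $\Psi$ of $\Phi\in\mathcal{D}^1(\mathbb{R}^n)$ does have $\|d\Psi\|_{L^1}$ controlled by $\|d\Phi\|_{L^1}$, but its tangential trace on $\{x_n=0\}$ is $\iota^*\Psi=2\sum_{i<n}\phi_i(x',0)\,dx^i$, generically nonzero. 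A Stokes-type argument (pair with an arbitrary $(n-2)$-form $\omega$ and pass to the limit along $\mathcal{D}^1(\mathbb{R}^n_+)$ approximants) shows that any smooth element of $\Upsilon^1_{1,0}(\mathbb{R}^n_+)$ must have vanishing tangential trace; equivalently, the obvious cutoffs $\eta_\epsilon(x_n)\Psi$ fail to converge in $\Upsilon^1_1$ because $\|d\eta_\epsilon\wedge\Psi\|_{L^1}$ stays bounded below by $\sum_{i<n}\int|\psi_i(x',0)|\,dx'$. Hence $\Psi\notin\Upsilon^1_{1,0}(\mathbb{R}^n_+)$ and the pairing $v(\psi_i)$ is undefined. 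Switching to tangential-odd/normal-even cures the trace obstruction, but then tangential and normal components of the putative extension are obtained by opposite reflections of $v$, so they do not come from a single scalar distribution; and an all-odd fold (which would correspond to a genuine odd extension of $v$) no longer bounds $\|d\Psi\|_{L^1}$ by $\|d\Phi\|_{L^1}$, since the $(i,n)$ components pick up the symmetric part $\partial_i\phi_n+\partial_n\phi_i$. In short, no reflection of this kind yields a bounded extension $d^1(\mathbb{R}^n_+)\to d^1(\mathbb{R}^n)$, and the existence of such an extension is a separate---possibly harder---question than the embedding into $bmo_r(\Omega)$ that is actually being asked.
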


In Section 5, we define $d^k$ classes on Riemannian manifolds with bounded geometry and based on the results of Section 3 we prove the refined embeddings between critical Sobolev space and $bmo$ on such manifolds.
\begin{theorem}
Let $M$ be the Riemannian manifold with bounded geometry. Then the following continuous embeddings are true
\[
W^{1,n}(M) \subset d^{n-1}(M) \subset \dots \subset d^1(M) \subset bmo(M).
\]
\end{theorem}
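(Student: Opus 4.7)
The plan is to reduce the manifold statement to the Euclidean chain of embeddings proved in Section 3. By bounded geometry we may fix an injectivity radius $r_0 > 0$, pick a maximal $r_0/4$-separated net $\{x_\alpha\}\subset M$, and work with the uniformly locally finite cover $\{U_\alpha\}$ where $U_\alpha = B_g(x_\alpha, r_0)$, together with exponential (or harmonic) coordinate charts $\varphi_\alpha : U_\alpha \to B(0,r_0)\subset\mathbb{R}^n$ in which the metric coefficients $g^{(\alpha)}_{ij}$ and all their derivatives are controlled uniformly in $\alpha$. Choose a smooth partition of unity $\{\chi_\alpha\}$ subordinate to $\{U_\alpha\}$ with uniform bounds on every $\chi_\alpha$ and its derivatives. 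The spaces $W^{1,n}(M)$, $d^k(M)$ (as introduced in Section 5), and $bmo(M)$ all admit chart-wise characterizations of the form $\|u\|_{X(M)} \asymp \sup_\alpha \|(\chi_\alpha u)\circ \varphi_\alpha^{-1}\|_{X(\mathbb{R}^n)}$, the implied constants depending only on the bounded geometry.

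With this reduction in place, each of the three kinds of inclusions is proved one chart at a time. For the first inclusion, $W^{1,n}(M)\subset d^{n-1}(M)$, one notes that if $u \in W^{1,n}(M)$ then $(\chi_\alpha u)\circ\varphi_\alpha^{-1} \in W^{1,n}(\mathbb{R}^n)$ with uniformly controlled norm, and Van Schaftingen's Euclidean embedding \cite{Schaftingen2004181} finishes the argument. For the intermediate inclusions $d^j(M)\subset d^{j-1}(M)$ one applies the corresponding Euclidean embeddings that follow from the chain established in Section 3. Finally, $d^1(M)\subset bmo(M)$ follows from Theorem \ref{main_thm} applied to each $(\chi_\alpha u)\circ\varphi_\alpha^{-1}$.

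The key technical point, and the main obstacle, is setting up and verifying the chart-wise characterization for $d^k(M)$. Two features of $d^k$ singled out in the introduction will be used crucially: (i) $d^k(\mathbb{R}^n)$ is invariant under smooth changes of variables, so that pulling a test form $\Phi$ on $U_\alpha$ back to $\mathbb{R}^n$ via $\varphi_\alpha^{-1}$ produces a form whose $\Upsilon^1_k$ norm is comparable to $\|\Phi\|_{L^1(U_\alpha)} + \|d\Phi\|_{L^1(U_\alpha)}$ uniformly in $\alpha$ (the Jacobian and all coefficients of the push-forward being uniformly bounded by bounded geometry); and (ii) multiplication by a smooth cut-off is bounded on $d^k$, which allows the passage from local testing against $\chi_\alpha u$ to global testing against $u$. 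Once these two points are set up carefully, assembling the chart-wise estimates across the locally finite atlas — by summation when the target is a Sobolev-type norm and by taking a supremum when it is of $bmo$ type — yields the full chain of continuous embeddings.
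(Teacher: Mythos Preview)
Your overall strategy---reduce to the Euclidean chain via a tame partition of unity and apply Lemma~\ref{lem_emb}, Theorem~\ref{thm_VS_my_version}, and Theorem~\ref{main_thm} chart by chart---is exactly the paper's approach. However, you are doing considerably more work than necessary, because you seem to assume $d^k(M)$ has an intrinsic definition (via test forms on $M$) that must then be shown equivalent to a chart-wise quantity. In this paper that is not the case: $W^{1,n}(M)$, $d^k(M)$, and $bmo(M)$ are all \emph{defined} by the formula
\[
\|u\|_{X(M)}=\sum_j \|(\psi_j u)\circ\exp_{p_j}\|_{X(\mathbb{R}^n)},
\]
with the same fixed tame partition $\{\psi_j\}$. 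Consequently the entire ``key technical point'' you flag---verifying the chart-wise characterization for $d^k(M)$ via diffeomorphism invariance and cutoff stability---is vacuous for the purpose of proving Theorem~\ref{thm_on_manifolds}; those properties are relevant only to the (separate) question of whether the definition depends on the choice of tame atlas. The proof in the paper is literally one line: apply the three Euclidean inequalities termwise and sum over $j$.

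Two small corrections. First, the paper uses $\sum_j$, not $\sup_\alpha$, in all of the localized norms (including $bmo(M)$); your mixed ``sum for Sobolev-type, sup for bmo-type'' assembly is both unnecessary and inconsistent with the definitions actually in force. Second, your remark that the chart-wise equivalence constants ``depend only on the bounded geometry'' is correct in spirit but again superfluous here, since with sums on both sides and a common partition the constant is simply the Euclidean embedding constant.
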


\section{Preliminaries}
Let $\Omega\subset \mathbb{R}^n$ be open. We will use the Schwartz notations: $\mathcal{E}(\Omega)$ will denote the class of smooth functions on $\Omega$,  $\mathcal{D}(\Omega)$ and $\mathcal{S}(\Omega)$ will stand for compactly supported smooth functions and smooth functions rapidly decaying at infinity with all their derivatives. By $\mathcal{D}^k(\Omega)$ we denote the class of $k$-differential forms with $\mathcal{D}(\Omega)$ components.
All $L^p$ spaces in this paper are considered relative to the Lebesgue measure. For the differential form of order $k$, $\Phi = \sum_{|I|=k} \phi_I dx^I$, we will use the notation
\[
\|\Phi\|_{L^1_k} = \sum\limits_{I} \|\phi_I\|_{L^1}.
\]
However, often when it does not create confusion we will omit the subscript $k$ and simply write $\|\Phi\|_{L^1}$ or $\|\Phi\|_1$.
\subsection{Local Hardy and BMO spaces of Goldberg}
We recall the definition and basic properties of the local Hardy space $h^1(\mathbb{R}^n)$ introduced by Goldberg \cite{Goldberg}.

Let us fix $\phi\in \mathcal{S}(\mathbb{R}^n)$ such that $\int \phi \neq 0$. For $f\in L^1(\mathbb{R}^n)$, we define the local maximal function $m_\phi f(x) $ by
\[
m_\phi f(x) = \sup\limits_{0<t<1} |\phi_t*f(x)|,
\]
where $\phi_t(y) = t^{-n} \phi (\frac{y}{t})$.

\begin{definition}
We say that $f$ belongs to the local Hardy space $h^1(\mathbb{R}^n)$ if $m_\phi f \in L^1(\mathbb{R}^n)$ and we put
\[
\|f\|_{h^1}:=\|m_\phi f\|_{L^1}.
\]
\end{definition}
It is useful to compare $h^1$ with the classic real Hardy space $H^1(\mathbb{R}^n)$, which can be defined using the global maximal function $M_\phi$,
\[
M_\phi f(x):= \sup_{t>0} |\phi_t*f(x)|, \ f\in L^1(\mathbb{R}^n).
\]
\begin{definition}
We say that $f$ belongs to the Hardy space $H^1(\mathbb{R}^n)$ if $M_\phi f \in L^1(\mathbb{R}^n)$, and we put
\[
\|f\|_{H^1}:=\|M_\phi f\|_{L^1}.
\]
\end{definition}
It follows from the definitions of the maximal functions that $m_\phi f(x) \leq M_\phi f(x)$ for any $f\in L^1$ and $x\in \mathbb{R}^n$. Therefore $H^1 \subset h^1$. One of the reasons why it is often more convenient to deal with a larger space $h^1$ instead of $H^1$ is that $\mathcal{S}(\mathbb{R}^n) \subset h^1(\mathbb{R}^n)$, while any $f\in H^1(\mathbb{R}^n)$ has to satisfy $\int_{\mathbb{R}^n} f = 0$. Moreover, the following result is true.
\begin{lem}[\cite{Goldberg}]
\label{lem_h1_density}
The space $\mathcal{D}(\mathbb{R}^n)$ is dense in $h^1(\mathbb{R}^n)$.
\end{lem}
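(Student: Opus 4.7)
The plan is to combine Goldberg's atomic decomposition of $h^1(\mathbb{R}^n)$ with a standard mollification argument. I would first invoke the atomic decomposition: every $f \in h^1(\mathbb{R}^n)$ can be written as $f = \sum_j \lambda_j a_j$ with $\sum_j |\lambda_j| \leq C\|f\|_{h^1}$, convergent in the $h^1$-norm, where each $a_j$ is a local $(1,\infty)$-atom, i.e.\ $a_j$ is supported in a cube $Q_j$, $\|a_j\|_\infty \leq |Q_j|^{-1}$, and $\int a_j = 0$ whenever $\mathrm{side}(Q_j) \leq 1$. Truncating the series at some large index $N$ reduces the problem, up to arbitrarily small $h^1$-error, to approximating a finite linear combination of atoms by $\mathcal{D}(\mathbb{R}^n)$-functions.

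For a single atom $a$ supported in a cube $Q$, I would set $a_\epsilon := \rho_\epsilon * a$, where $\rho_\epsilon$ is a standard $\mathcal{D}$-mollifier with $\int \rho_\epsilon = 1$. Then $a_\epsilon \in \mathcal{D}(\mathbb{R}^n)$ and is supported in the $\epsilon$-enlargement $Q_\epsilon$ of $Q$. The difference $b_\epsilon := a - a_\epsilon$ satisfies $\int b_\epsilon = 0$ because mollification preserves integrals, and using the $L^2$-version of the atomic characterization of $h^1$ one gets
\[
\|b_\epsilon\|_{h^1} \leq C\, |Q_\epsilon|^{1/2} \|b_\epsilon\|_{L^2}.
\]
Since $a$ lies in $L^\infty$ with compact support, it is in $L^2$, and standard $L^2$-mollification gives $\|b_\epsilon\|_{L^2} \to 0$ as $\epsilon \to 0$, so $\|b_\epsilon\|_{h^1} \to 0$. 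In the large-cube case $\mathrm{side}(Q_\epsilon) > 1$ the mean-zero condition is not needed; in the small-cube case it holds automatically since $\int b_\epsilon = 0$. Crucially, merely using the $L^\infty$-bound $\|b_\epsilon\|_\infty \leq 2\|a\|_\infty$ only gives an $\epsilon$-independent estimate, which is why the $L^2$-atom formulation is essential.

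Combining the two pieces, given $\delta > 0$ I would first fix $N$ so that $\|f - \sum_{j\leq N}\lambda_j a_j\|_{h^1} < \delta/2$, then choose $\epsilon$ small enough that $\sum_{j\leq N} |\lambda_j|\, \|a_j - (a_j)_\epsilon\|_{h^1} < \delta/2$. The resulting function $\sum_{j\leq N} \lambda_j (a_j)_\epsilon$ lies in $\mathcal{D}(\mathbb{R}^n)$ and approximates $f$ within $\delta$. The main obstacle is not the mollification step but the two substantive inputs from Goldberg's paper, namely the atomic decomposition of $h^1$ and the equivalence of $(1,\infty)$- and $(1,2)$-atomic norms; both are by now standard but nontrivial, while the convolution argument reduces to routine $L^2$-estimates.
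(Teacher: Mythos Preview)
The paper does not supply its own proof of this lemma; it is simply quoted with a citation to Goldberg and used as a black box. Your argument via the atomic decomposition of $h^1$ followed by mollification of a finite sum of atoms is a standard and correct way to establish the result, and the two nontrivial inputs you flag (the atomic decomposition and the equivalence of the $(1,\infty)$- and $(1,2)$-atomic norms) are precisely the content of Goldberg's paper that the citation is pointing to. There is nothing in the paper to compare against beyond noting that your proof is sound; in effect you have unpacked the citation.
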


It is important to note that $f\in h^1(\mathbb{R}^n)$ and $\int\limits_{\mathbb{R}^n} f = 0$ do not imply that $f\in H^1(\mathbb{R}^n)$ (see Theorem 3 in \cite{Goldberg}). However, the following is true
\begin{lem}
\label{lem_H1_h1}
If $f\in h^1(\mathbb{R}^n)$, $\int\limits_{\mathbb{R}^n} f(x) dx =0$ and $supp \ f\subset B$, where $B$ is a bounded subset of $\mathbb{R}^n$, then there exists $C_B>0$ such that
\[
\|f\|_{H^1} \leq C_B \|f\|_{h^1}.
\]
\end{lem}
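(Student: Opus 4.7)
The plan is to control the global Hardy maximal function $M_\phi f$ by the local one $m_\phi f$ plus a ``tail'' that is bounded purely by $\|f\|_{L^{1}}$, and then to use the moment condition $\int f = 0$ together with the compact support of $f$ to render that tail small. Observe first that
\[
M_\phi f(x) = \max\Bigl( m_\phi f(x),\ \sup_{t\ge 1}|\phi_t * f(x)| \Bigr),
\]
so it suffices to estimate $T(x):=\sup_{t\ge 1}|\phi_t * f(x)|$ in $L^{1}$ by $\|f\|_{h^1}$. Fix $R>0$ with $B\subset \overline{B(x_0,R)}$ for some $x_0$, and split $\mathbb{R}^n = B(x_0,2R) \cup \{|x-x_0|>2R\}$. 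On the inner piece, use the naive bound $|\phi_t * f(x)|\le t^{-n}\|\phi\|_\infty\|f\|_{L^1}\le \|\phi\|_\infty \|f\|_{L^1}$ for $t\ge 1$, which integrates to $C R^n \|f\|_{L^1}$.

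On the outer piece I exploit cancellation: since $\int f=0$, for any $y_0\in B$,
\[
\phi_t * f(x) = \int_{B} \bigl(\phi_t(x-y) - \phi_t(x-y_0)\bigr) f(y)\, dy.
\]
By the mean value theorem the integrand is bounded by $|y-y_0| \sup_{\tau\in[0,1]}|\nabla\phi_t(x-y_0-\tau(y-y_0))|$, and Schwartz decay gives $|\nabla \phi_t(z)|\le C_N t^{-n-1}(1+|z|/t)^{-N}$. For $|x-x_0|>2R$ and $y,y_0\in B$ the argument $z$ satisfies $|z|\ge \tfrac12|x-x_0|$, so with $N=n+1$ one checks that both the case $t\le |x-x_0|$ (using the decay factor $(t/|x-x_0|)^{n+1}$) and the case $t\ge |x-x_0|$ (keeping $t^{-n-1}$ and bounding by $|x-x_0|^{-n-1}$) yield
\[
\sup_{t\ge 1}|\phi_t * f(x)| \ \le\ \frac{C_{n,\phi}\,R}{|x-x_0|^{n+1}} \,\|f\|_{L^1}.
\]
This is integrable on $\{|x-x_0|>2R\}$ and contributes $C'_B \|f\|_{L^1}$.

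Combining, $\|T\|_{L^1}\le C_B \|f\|_{L^1}$, hence $\|M_\phi f\|_{L^1} \le \|f\|_{h^1} + C_B\|f\|_{L^1}$. To finish, I reduce $\|f\|_{L^1}$ to $\|f\|_{h^1}$: since $\phi_t * f \to (\int\phi) f$ in $L^{1}$ as $t\to 0^+$, Fatou (or passing to the limit) gives $\bigl|\int\phi\bigr|\,\|f\|_{L^1} \le \|m_\phi f\|_{L^1} = \|f\|_{h^1}$, and $\int\phi\ne 0$ by assumption. Assembling everything yields $\|f\|_{H^1}\le C_B\|f\|_{h^1}$.

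The main technical obstacle is the tail estimate on $\{|x-x_0|>2R\}$: one needs a bound that beats $|x|^{-n}$ uniformly in $t\ge 1$, and in particular must handle the transition regime $t\sim |x-x_0|$ where neither the size bound on $\phi_t$ nor the plain Schwartz decay alone is sufficient. Using the cancellation $\int f=0$ to introduce a first-order difference of $\phi_t$, and then optimising the exponent $N$ in the Schwartz bound for $\nabla\phi$, is what gains the extra factor $|x-x_0|^{-1}$ (and the factor $R$) needed to make the result integrable and dependent only on $B$.
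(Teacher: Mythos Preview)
The paper states this lemma as a preliminary fact without proof, so there is no ``paper's own proof'' to compare against.

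Your argument is correct and is the standard direct route: split $M_\phi f$ into the local piece $m_\phi f$ and the tail $T(x)=\sup_{t\ge 1}|\phi_t*f(x)|$; bound $T$ trivially on a ball around $\operatorname{supp} f$, and far away use the cancellation $\int f=0$ together with the mean-value/Schwartz estimate on $\nabla\phi_t$ to gain the extra power $|x-x_0|^{-1}$; finally absorb $\|f\|_{L^1}$ into $\|f\|_{h^1}$ via $|\!\int\phi|\,|f|\le m_\phi f$ a.e. Two minor remarks. First, with the specific choice $N=n+1$ the inequality $(1+|z|/t)^{-(n+1)}\le (t/|z|)^{n+1}$ already gives $|\nabla\phi_t(z)|\le C|z|^{-n-1}$ uniformly in $t>0$, so the case split $t\lessgtr |x-x_0|$ is not actually needed (though it does no harm). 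Second, in your outer integral the factor $R$ in the numerator cancels against the $(2R)^{-1}$ coming from $\int_{2R}^\infty r^{-2}\,dr$, so that contribution is in fact bounded independently of $R$; the dependence of $C_B$ on $B$ enters only through the inner-ball term $CR^n\|f\|_{L^1}$.
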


\begin{definition}[\cite{Goldberg}]
We say that $f\in L^1_{loc}(\mathbb{R}^n)$ belongs to $bmo(\mathbb{R}^n)$ if
\[
\|f\|_{bmo}:= \sup_{l(Q)\leq 1} \frac{1}{|Q|} \int_Q |f(x) - f_Q| dx  + \sup_{l(Q)\geq 1} \frac{1}{|Q|} \int_Q |f(x)| dx < \infty,
\]
where $f_Q = \frac{1}{|Q|} \int_Q f(y) dy$ and $Q$ are cubes with sides parallel to the axes, of side-length $l(Q)$.
\end{definition}

It is clear that $bmo(\mathbb{R}^n)$ is a subspace of $\rm{BMO}(\mathbb{R}^n)$. Moreover, if $\|f\|_{bmo} = 0$, then $f=0$ a.e. on $\mathbb{R}^n$, unlike in $\rm{BMO}(\mathbb{R}^n)$, where constant functions are identified with $f\equiv 0$.

The following theorem of Goldberg shows the relation between $h^1$ and $bmo$ and the boundedness of pseudo-differential operators of degree zero on $h^1$.
\begin{theorem} [\cite{Goldberg}]
The space $bmo(\mathbb{R}^n)$ is isomorphic to the space of continuous linear functionals on $h^1(\mathbb{R}^n)$.
\end{theorem}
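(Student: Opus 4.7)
The plan is to mirror Fefferman's classical $H^1$--BMO duality argument in Goldberg's localized setting, establishing two inclusions: (i) every $g\in bmo(\mathbb{R}^n)$ induces a bounded linear functional on $h^1(\mathbb{R}^n)$, and (ii) every $\ell\in (h^1)^*$ arises from a unique such $g$. The cornerstone of both halves is an atomic decomposition of $h^1$ with a scale-adapted cancellation condition: every $f\in h^1$ admits a representation $f=\sum_j \lambda_j a_j$ with $\sum_j |\lambda_j|\leq C\|f\|_{h^1}$, where each $a_j$ is supported in a cube $Q_j$, satisfies $\|a_j\|_\infty\leq |Q_j|^{-1}$, and obeys $\int a_j=0$ \emph{only when} $l(Q_j)\leq 1$ (large atoms may have nonzero mean). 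This dichotomy precisely mirrors the two suprema defining $\|\cdot\|_{bmo}$; I would produce it by a Calderón--Zygmund stopping-time construction applied to $m_\phi f$, which, because $t$ is restricted to $0<t<1$, does not enforce cancellation at scales above the unit.

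Once this decomposition is in hand, direction (i) follows atom-by-atom. When $l(Q_j)\leq 1$, cancellation gives
\[
|\langle a_j,g\rangle|=\Big|\int_{Q_j} a_j(g-g_{Q_j})\Big|\leq \|a_j\|_\infty \int_{Q_j}|g-g_{Q_j}|\leq \|g\|_{bmo},
\]
while for $l(Q_j)>1$ no cancellation is needed and $|\langle a_j,g\rangle|\leq \|a_j\|_\infty\int_{Q_j}|g|\leq \|g\|_{bmo}$. Summing and using Lemma~\ref{lem_h1_density} to extend the pairing by density from $\mathcal{D}(\mathbb{R}^n)$ to $h^1(\mathbb{R}^n)$ yields $|\ell_g(f)|\leq C\|g\|_{bmo}\|f\|_{h^1}$, so $\ell_g \in (h^1)^*$ with the stated operator-norm bound.

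For direction (ii), given $\ell\in (h^1)^*$, I would build $g$ locally via Riesz representation: for any cube $Q$ with $l(Q)\leq 1$ the space $L^\infty_0(Q)=\{\varphi\in L^\infty(Q):\int\varphi=0\}$ embeds continuously in $h^1$ (such $\varphi$ is, up to normalization, a local atom), so $\ell$ restricts to a bounded $L^2$-functional there, represented by some $g_Q\in L^2(Q)$ modulo constants; for $l(Q)>1$ the full $L^\infty(Q)$ embeds with no mean restriction, fixing the constant. Patching the $g_Q$ on overlapping cubes produces a global $g\in L^1_{loc}(\mathbb{R}^n)$, and then testing $\ell$ against the unit-mass sign functions $|Q|^{-1}\operatorname{sgn}(g-g_Q)\chi_Q$ on small cubes and $|Q|^{-1}\operatorname{sgn}(g)\chi_Q$ on large cubes controls the two suprema defining $\|g\|_{bmo}$ by $\|\ell\|$. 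The main obstacle I anticipate is the atomic decomposition with its mixed cancellation condition, since the classical $H^1$-theory relies on every atom being mean-zero; the modification needed to allow non-cancelling atoms above the unit scale, while retaining control of the $h^1$-norm via $m_\phi$, is exactly what makes $bmo$ rather than $\mathrm{BMO}$ the correct dual space.
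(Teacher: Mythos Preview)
The paper does not prove this theorem; it is stated with a citation to Goldberg and used throughout as a black box. Your outline is a correct sketch of the standard argument (and essentially Goldberg's own, via an atomic decomposition of $h^1$ in which atoms on cubes with $l(Q)>1$ carry no moment condition), so there is nothing in the present paper to compare it against beyond the citation itself.
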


\begin{theorem} [{\cite{Goldberg}}]
\label{thm_goldberg_OPS}
If $T\in \rm{OPS}^0$, then there exists a constant $C>0$ such that
\[
\|T f\|_{h^1} \leq C \|f\|_{h^1} \text{ for any } f\in \mathcal{S}(\mathbb{R}^n).
\]
Therefore, any $T\in \rm{OPS^0}$ can be extended to a continuous linear operator on $h^1(\mathbb{R}^n)$.
\end{theorem}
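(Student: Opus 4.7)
The plan is to reduce the $h^1$-boundedness of $T\in\mathrm{OPS}^0$ to a uniform estimate on atoms. I will invoke Goldberg's atomic characterization of $h^1(\mathbb{R}^n)$: every $f\in h^1$ decomposes as $f=\sum_j \lambda_j a_j$ with $\sum_j|\lambda_j|\lesssim\|f\|_{h^1}$, where each $a_j$ is a local $(1,\infty)$-atom, i.e.\ supported in a cube $Q_j$ with $\|a_j\|_\infty\le|Q_j|^{-1}$ and additionally satisfying $\int a_j=0$ whenever $\ell(Q_j)\le 1$. By linearity and the density furnished by Lemma \ref{lem_h1_density}, it will then suffice to prove $\|Ta\|_{h^1}\le C$ uniformly in all such atoms.

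Next I would split the operator itself. Fix $\chi\in\mathcal{D}(\mathbb{R}^n)$ equal to $1$ near the origin and supported in the unit ball, and write $T=T_0+T_\infty$, where $T_0$ has distributional kernel $\chi(x-y)K(x,y)$ and $T_\infty$ has kernel $(1-\chi(x-y))K(x,y)$; here $K$ denotes the distributional kernel of $T$, smooth off the diagonal and satisfying the standard Calder\'on--Zygmund size and derivative estimates $|\partial^\alpha_x\partial^\beta_y K(x,y)|\lesssim|x-y|^{-n-|\alpha|-|\beta|}$. Consequently $T_0$ is a compactly supported Calder\'on--Zygmund operator and inherits $L^2$-boundedness from $T$, while integration by parts in the oscillatory integral representation of $K$ shows that $T_\infty$ has kernel decaying faster than any polynomial in $|x-y|$.

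For an atom $a$ supported in a cube $Q$ with $\ell(Q)\le 1$, I would apply the classical Coifman--Weiss argument to $T_0a$: the H\"ormander regularity of $K$ combined with the vanishing mean of $a$ yields $\int_{\{|x-x_Q|\ge 2\ell(Q)\}}|T_0a|\lesssim 1$, while the $L^2$-boundedness and Cauchy--Schwarz handle the doubled cube, and the local maximal function $m_\phi(T_0a)$ is controlled analogously, giving $\|T_0a\|_{h^1}\lesssim 1$. The $T_\infty$ contribution is easier: rapid kernel decay together with $\|a\|_1\le 1$ yields $\|T_\infty a\|_{h^1}\lesssim 1$ directly. For an atom with $\ell(Q)\ge 1$, where no cancellation is available, the locality of $h^1$ saves the argument: since $m_\phi$ only probes scales $t<1$, the $L^2$-boundedness of $T$ together with Cauchy--Schwarz controls $Ta$ on the doubled cube, while the bound $|K(x,y)|\lesssim|x-y|^{-n}$ produces an integrable tail outside $Q$ since $\ell(Q)\ge 1$ keeps the relevant tail integrals finite.

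The main obstacle I foresee is not the small-atom case, which parallels the classical $H^1\to L^1$ theory of Calder\'on--Zygmund operators, but the bookkeeping for the large atoms, where the absence of a vanishing moment must be compensated precisely by the restricted range $t<1$ in the definition of $m_\phi$. This asymmetry between small and large scales is exactly what distinguishes $h^1$ from $H^1$ and ensures that a generic $T\in\mathrm{OPS}^0$, which may well fail to map $H^1$ into itself, nevertheless acts boundedly on $h^1$.
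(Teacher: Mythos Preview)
First, note that the paper does not prove this theorem: it is quoted from Goldberg with no argument and is invoked as a black box in the proof of Theorem~\ref{main_thm}. So there is no proof in the paper against which to compare.

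Your atomic outline is the standard route and would succeed, but there is a concrete error in your handling of the large atoms. You assert that for an atom $a$ supported in $Q$ with $\ell(Q)\ge 1$, ``the bound $|K(x,y)|\lesssim|x-y|^{-n}$ produces an integrable tail outside $Q$ since $\ell(Q)\ge 1$ keeps the relevant tail integrals finite.'' That is false as written: the integral $\int_{|z|>R}|z|^{-n}\,dz$ diverges logarithmically for every $R>0$, so the Calder\'on--Zygmund size bound alone never yields an $L^1$ tail, no matter how large $\ell(Q)$ is. What actually rescues the large-atom case is the feature you already isolated for $T_\infty$: because the symbol of $T\in\mathrm{OPS}^0$ is smooth at $\xi=0$, repeated integration by parts shows the kernel decays faster than any negative power of $|x-y|$ once $|x-y|\gtrsim 1$. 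Using that rapid decay (not the $|x-y|^{-n}$ bound) makes the tail of $m_\phi(Ta)$ integrable and closes the argument. This is exactly the step at which the hypothesis $T\in\mathrm{OPS}^0$ is essential and a bare Calder\'on--Zygmund operator would not suffice; you have the right ingredient in hand but plugged in the wrong estimate at the decisive moment.

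A secondary remark: the passage from ``$\|Ta\|_{h^1}\le C$ uniformly on atoms'' to ``$T$ bounded on $h^1$'' is not automatic in general (cf.\ Bownik's example for $H^1$). It is harmless here since $T$ is $L^2$-bounded and one can go through finite atomic sums, but ``linearity and density'' alone does not justify it.
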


\subsection{Local Hardy and BMO spaces on Lipschitz domains}
The BMO and Hardy spaces on bounded Lipschitz domains were studied in \cite{CDS}, \cite{CKS} and \cite{Myachi_Hp} (see also \cite{Jones_Extension_theorems_for_BMO} and \cite{Strichartz_Hardy_on_manifolds}).

\begin{definition} \cite{CKS}, \cite{Myachi_Hp}

Let $\Omega\subset \mathbb{R}^n$ be a bounded Lipschitz domain. The space $h^1_r(\Omega)$ consists of elements of $L^1(\Omega)$ which are the restrictions to $\Omega$ of elements of $h^1(\mathbb{R}^n)$, i.e.
\[
h^1_r(\Omega) = \{f\in L^1(\Omega): \exists F\in h^1(\mathbb{R}^n): F=f \text{ on } \Omega\}.
\]
We can consider this as a quotient space equipped with the quotient norm
\[
\|f\|_{h^1_r(\Omega)} := \inf\{\|F\|_{h^1(\mathbb{R}^n)}: F=f \text{ on } \Omega \}.
\]
\end{definition}
\begin{definition} \cite{CDS}
The space $h^1_z(\Omega)$ is defined to be the subspace of $h^1(\mathbb{R}^n)$ consisting of those elements which are supported on $\overline{\Omega}$.
\end{definition}

Like in the case of $\mathbb{R}^n$, smooth and compactly supported functions are dense in these spaces:

\begin{lem} \cite{caetano2000approximation}
Let $\Omega$ be a bounded Lipschitz domain. Then the space $\mathcal{D}(\Omega)$ is dense in $h^1_r(\Omega)$.
\end{lem}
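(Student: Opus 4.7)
The plan is to proceed in two stages: first reduce to functions that are restrictions of smooth, compactly supported functions on $\mathbb{R}^n$; then truncate these smooth restrictions in the interior of $\Omega$ and control the resulting error in the quotient norm via a reflection-type extension across the Lipschitz boundary.

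For the first stage, given $f \in h^1_r(\Omega)$ and $\varepsilon > 0$, I would pick an extension $F \in h^1(\mathbb{R}^n)$ with $\|F\|_{h^1} \leq \|f\|_{h^1_r(\Omega)} + \varepsilon$, and then apply Lemma \ref{lem_h1_density} to find $G \in \mathcal{D}(\mathbb{R}^n)$ with $\|F - G\|_{h^1} < \varepsilon$. Since the quotient norm is dominated by the $h^1$-norm of any extension, $\|f - G|_\Omega\|_{h^1_r(\Omega)} < \varepsilon$. It remains to approximate $G|_\Omega$ by elements of $\mathcal{D}(\Omega)$.

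For the second stage, introduce a smooth cutoff $\chi_\eta \in \mathcal{D}(\Omega)$ with $\chi_\eta = 1$ on $\{d(\cdot,\partial\Omega) > 2\eta\}$, $\mathrm{supp}(\chi_\eta) \subset \{d(\cdot,\partial\Omega) > \eta\}$, and $|\nabla \chi_\eta| \lesssim 1/\eta$. Then $G\chi_\eta \in \mathcal{D}(\Omega)$. To estimate $\|(G - G\chi_\eta)|_\Omega\|_{h^1_r(\Omega)}$ via the quotient norm, I would construct an extension $\Psi_\eta$ of $(G(1-\chi_\eta))|_\Omega$ to $\mathbb{R}^n$ that is supported in a narrow collar $\{d(\cdot, \partial\Omega) < C\eta\}$ of Lebesgue measure $\lesssim \eta$, by reflecting across $\partial\Omega$ in local bi-Lipschitz charts and damping smoothly to zero in the normal direction. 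Then $\Psi_\eta$ is a bounded function with $\|\Psi_\eta\|_\infty \lesssim \|G\|_\infty$ supported in this collar.

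To show $\|\Psi_\eta\|_{h^1(\mathbb{R}^n)} \to 0$, I would decompose $\Psi_\eta = \sum_j \psi_j$ via a partition of unity subordinate to a bounded-overlap cover of the collar by $O(\eta^{1-n})$ cubes $Q_j$ of side $\eta$; each $\psi_j$ is supported in $Q_j$ with $\|\psi_j\|_\infty \lesssim \|G\|_\infty$. Splitting $\psi_j = (\psi_j - \langle\psi_j\rangle_{Q_j}\mathbf{1}_{Q_j}) + \langle\psi_j\rangle_{Q_j}\mathbf{1}_{Q_j}$, the mean-zero piece is a constant multiple (of size $\lesssim \|G\|_\infty \eta^n$) of a classical $H^1$-atom on $Q_j$, and summing over $j$ yields a contribution of order $\|G\|_\infty \eta$. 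The residual $R_\eta := \sum_j \langle\psi_j\rangle_{Q_j}\mathbf{1}_{Q_j}$ is bounded, supported in a fixed bounded set with $L^1$-norm $\lesssim \|G\|_\infty \eta$, and decomposes into finitely many Goldberg Type-2 atoms at scale one with total coefficient $\lesssim \|G\|_\infty \eta$. Consequently $\|\Psi_\eta\|_{h^1} \lesssim \|G\|_\infty \eta \to 0$, completing the approximation. The main obstacle is precisely this atomic estimate: a naive bound via the $L^1$-norm fails because $h^1$ is strictly stronger than $L^1$, and the small-cube pieces $\psi_j$ generically fail the cancellation condition required for sub-unit scale $H^1$-atoms. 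The Lipschitz hypothesis enters essentially through the collar volume estimate $|\{d(\cdot,\partial\Omega) < \eta\}| \lesssim \eta$ and the existence of the reflection extension in bi-Lipschitz charts; both are needed to make the atomic accounting tight.
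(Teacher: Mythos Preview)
The paper does not supply its own proof of this lemma; it is quoted directly from \cite{caetano2000approximation}. So there is no in-paper argument to compare against, and your two-stage plan (reduce to $G|_\Omega$ with $G\in\mathcal{D}(\mathbb{R}^n)$ via Lemma~\ref{lem_h1_density}, then cut off near $\partial\Omega$ and control the boundary layer through the quotient norm) is a legitimate self-contained route.

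Your argument is essentially correct, with one quantitative slip. In the residual step you assert that $R_\eta=\sum_j\langle\psi_j\rangle_{Q_j}\mathbf{1}_{Q_j}$ decomposes into finitely many unit-scale Goldberg atoms with total coefficient $\lesssim\|G\|_\infty\,\eta$. Using $L^2$-normalized atoms on a fixed finite family of unit cubes covering the collar, the bound you actually get is
\[
\|R_\eta\mathbf{1}_Q\|_{L^2}\le \|R_\eta\|_\infty\,|\{\operatorname{supp}R_\eta\}\cap Q|^{1/2}\lesssim \|G\|_\infty\,\eta^{1/2},
\]
so the total coefficient is $\lesssim_\Omega \|G\|_\infty\,\eta^{1/2}$ rather than $\|G\|_\infty\,\eta$. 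This is harmless: $\eta^{1/2}\to 0$ suffices, and the mean-zero pieces already give the $O(\eta)$ contribution you claim. Everything else---the collar volume estimate, the bi-Lipschitz reflection producing a bounded extension supported in the $C\eta$-collar, and the atom count $O(\eta^{1-n})$---is in order and uses the Lipschitz hypothesis exactly where needed.
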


\begin{lem}
\label{lem_density_h1_domains}
Let $\Omega$ be a domain of $\mathbb{R}^n$. Then the set of $\mathcal{D}(\Omega)$ functions is dense in $h^1_z(\Omega)$.
\end{lem}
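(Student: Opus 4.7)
The plan is to approximate any $f \in h^1_z(\Omega)$ in two stages: first replace $f$ by an element of $h^1(\mathbb{R}^n)$ whose support lies compactly inside $\Omega$, then mollify to obtain a function in $\mathcal{D}(\Omega)$. The first stage exploits the Lipschitz structure of $\partial\Omega$ together with the $h^1$-boundedness of smooth diffeomorphisms, while the second is a standard mollification.

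For the first stage, I would construct a smooth vector field $X$ on a neighborhood of $\overline{\Omega}$ that points transversely into $\Omega$ along $\partial\Omega$; locally the Lipschitz graphs supply such directions, and a smooth partition of unity patches them into a global field. Denoting its time-$t$ flow by $\Phi_t$, for every sufficiently small $t>0$ the map $\Phi_t$ is a diffeomorphism of a neighborhood of $\overline{\Omega}$ with $\Phi_t(\overline{\Omega})\subset\subset \Omega$. Define $f_t$ as the pushforward of $f$ under $\Phi_t$, extended by zero. Then $\mathrm{supp}(f_t) \subset \Phi_t(\overline{\Omega})$ is a compact subset of $\Omega$, and the critical claim is that $f_t \to f$ in $h^1$ as $t\to 0^+$. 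To justify this, I would use that smooth diffeomorphisms equal to the identity outside a compact set induce bounded operators on $h^1(\mathbb{R}^n)$, a consequence of viewing composition with such a diffeomorphism as an order-zero pseudodifferential operator and invoking Theorem \ref{thm_goldberg_OPS}. Combined with the density of $\mathcal{D}(\mathbb{R}^n)$ in $h^1(\mathbb{R}^n)$ from Lemma \ref{lem_h1_density}, a standard $\varepsilon/3$ argument transfers the easy smooth convergence $g_t\to g$ for $g\in\mathcal{D}(\mathbb{R}^n)$ to arbitrary $f\in h^1$.

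For the second stage, given $g:=f_t$ with $\mathrm{supp}(g)\subset K\subset\subset\Omega$, convolve with a standard mollifier $\rho_\varepsilon$ supported in $B(0,\varepsilon)$. For $\varepsilon<\mathrm{dist}(K,\partial\Omega)$ one has $\rho_\varepsilon * g\in\mathcal{D}(\Omega)$, and $\rho_\varepsilon * g\to g$ in $h^1$ as $\varepsilon\to 0^+$ by dominated convergence applied to the local maximal function characterization of $h^1$. Combining the two stages, for $f\in h^1_z(\Omega)$ and $\delta>0$, choosing $t$ so small that $\|f-f_t\|_{h^1}<\delta/2$ and then $\varepsilon$ so small that $\|f_t-\rho_\varepsilon * f_t\|_{h^1}<\delta/2$ produces an element of $\mathcal{D}(\Omega)$ within $\delta$ of $f$.

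The main obstacle is the first stage, specifically establishing the uniform $h^1$-boundedness in $t$ of the flow-induced change of variables and its strong continuity in $t$ on all of $h^1$. My primary route would be through the pseudodifferential calculus as sketched above; a backup approach is to decompose $f$ atomically and verify directly that a smooth deformation sends an $h^1$-atom to a \emph{molecule} with controlled norm, which lies in $h^1$ by the standard molecular characterization. Either route reduces the convergence $f_t\to f$ to the trivial case of smooth compactly supported $f$, for which it is immediate.
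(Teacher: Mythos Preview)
The paper states this lemma without proof and without citation, so there is no argument of the paper's to compare against; I can only assess your proposal on its own merits.

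Your two-stage strategy---push the support compactly into $\Omega$ via a transverse flow, then mollify---is sound and is the standard route for bounded Lipschitz domains, which is the setting in which the lemma is actually used (Theorem~\ref{thm_3}). Note, however, that the lemma as stated says ``a domain of $\mathbb{R}^n$'' with no regularity hypothesis; your construction of the inward-pointing vector field explicitly uses the Lipschitz graph structure of $\partial\Omega$, so you are proving a slightly different (but adequate) statement and should say so.

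There is one genuine technical slip in your primary route: composition with a smooth diffeomorphism is \emph{not} a pseudodifferential operator of order zero, so Theorem~\ref{thm_goldberg_OPS} does not apply to it. The map $f\mapsto f\circ\Phi_t$ is a Fourier integral operator; its phase is $\Phi_t(x)\cdot\xi$ rather than $x\cdot\xi$, and the resulting amplitude does not lie in any $S^m_{1,0}$ class unless $\Phi_t$ is affine. The conclusion you need---uniform $h^1$-boundedness and strong continuity in $t$---is nevertheless true: Goldberg proves diffeomorphism invariance of $h^1$ directly in \cite{Goldberg}, and your backup via atoms also works cleanly (a smooth compactly supported diffeomorphism sends an $h^1$-atom to a bounded multiple of an atom on a comparably sized cube, with constants uniform for small $t$). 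So the gap is recoverable, but you should not route the argument through Theorem~\ref{thm_goldberg_OPS}.
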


The BMO analogs on $\Omega$ are defined as follows.
\begin{definition}
The space $bmo_z(\Omega)$ is defined to be a subspace of $bmo(\mathbb{R}^n)$ consisting of those elements which are supported on $\bar{\Omega}$, i.e.
\[
bmo_z(\Omega) = \{g\in bmo(\mathbb{R}^n): g=0 \text{ on } \mathbb{R}^n\setminus \bar{\Omega} \}
\]
with
\[
\|g\|_{bmo_z(\Omega)}= \|g\|_{bmo(\mathbb{R}^n)}.
\]
\end{definition}
\begin{definition} \cite{CDS}
\label{def_bmo_r}
Let $\Omega$ be a bounded Lipschitz domain. A function $g\in L^1_{loc}(\Omega)$ is said to belong to $bmo_r(\Omega)$ if
\[
\|g\|_{bmo_r(\Omega)} = \sup\limits_{|Q|\leq 1} \frac{1}{|Q|} \int_Q |g(x)-g_Q| dx + \sup\limits_{|Q|>1} \frac{1}{|Q|} \int_Q |g(x)| dx < \infty,
\]
where suprema are taken over all cubes $Q\subset \Omega$.
The space of such functions  equipped with norm $\|\cdot\|_{bmo_r(\Omega)}$ is called $bmo_r(\Omega)$.
\end{definition}

\begin{theorem} [\cite{Chang}, \cite{Myachi_Hp}]
\label{thm_bmoz_as_dual}
The space $bmo_z(\Omega)$ is isomorphic to the dual of $h^1_r(\Omega)$.
\end{theorem}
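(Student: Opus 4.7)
The plan is to realize $h^1_r(\Omega)$ as a quotient of $h^1(\mathbb{R}^n)$ and then invoke Goldberg's duality $(h^1(\mathbb{R}^n))^*\cong bmo(\mathbb{R}^n)$. Under these two identifications $(h^1_r(\Omega))^*$ becomes an annihilator sitting inside $bmo(\mathbb{R}^n)$, and the real work is to verify that this annihilator coincides exactly with $bmo_z(\Omega)$.

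First I would note that, directly from the definition of $h^1_r(\Omega)$ and its quotient norm, the restriction map $R\colon h^1(\mathbb{R}^n)\to h^1_r(\Omega)$, $F\mapsto F|_\Omega$, is a bounded surjection with kernel $N:=\{F\in h^1(\mathbb{R}^n) : F=0\text{ a.e.\ on }\Omega\}$, and the induced map $h^1(\mathbb{R}^n)/N\to h^1_r(\Omega)$ is an isometric isomorphism. By the standard identification of the dual of a quotient with the annihilator of the subspace, together with Goldberg's duality, we obtain
\[
(h^1_r(\Omega))^* \;\cong\; N^\perp \;=\; \{g\in bmo(\mathbb{R}^n) : \langle g,F\rangle = 0 \text{ for every } F\in N\},
\]
with equivalent norms.

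Next I would identify $N^\perp$ with $bmo_z(\Omega)$. For the inclusion $bmo_z(\Omega)\subseteq N^\perp$, note that $N$ coincides with $h^1_z(\mathbb{R}^n\setminus\overline{\Omega})$, hence by Lemma \ref{lem_density_h1_domains} it is the closure in $h^1$ of $\mathcal{D}(\mathbb{R}^n\setminus\overline{\Omega})$; for $g\in bmo_z(\Omega)$ and $\varphi\in\mathcal{D}(\mathbb{R}^n\setminus\overline{\Omega})$ the pairing is literally $\int g\varphi$, which vanishes because $g$ is supported in $\overline{\Omega}$ and $\varphi$ is supported in the open set $\mathbb{R}^n\setminus\overline{\Omega}$, so continuity of the pairing extends the vanishing to all of $N$. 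For the reverse inclusion $N^\perp\subseteq bmo_z(\Omega)$, if $g\in bmo(\mathbb{R}^n)$ annihilates $N$ then in particular $\langle g,\varphi\rangle=0$ for every $\varphi\in\mathcal{D}(\mathbb{R}^n\setminus\overline{\Omega})\subset\mathcal{S}(\mathbb{R}^n)\subset h^1(\mathbb{R}^n)$, so $g$ vanishes as a distribution on the open set $\mathbb{R}^n\setminus\overline{\Omega}$, and since $g$ is locally integrable it vanishes a.e.\ there, so $g\in bmo_z(\Omega)$.

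The main technical point is that the $bmo$--$h^1$ pairing is defined \emph{a priori} via duality and only coincides with $\int g\varphi$ on the dense subspace $\mathcal{D}(\mathbb{R}^n)$, so one has to make careful use of the density statements of Section 2, specifically Lemma \ref{lem_h1_density} to interpret the pairing and Lemma \ref{lem_density_h1_domains} (applied to $\mathbb{R}^n\setminus\overline{\Omega}$) to propagate the annihilator condition to all of $N$. Once these density reductions are in place, combining the two inclusions with the quotient-dual identification yields the claimed isomorphism $(h^1_r(\Omega))^*\cong bmo_z(\Omega)$ with equivalence of norms.
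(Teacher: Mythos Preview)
The paper does not give its own proof of this theorem: it is quoted from the literature (Chang, Miyachi) as a known duality result, with no argument supplied. So there is nothing in the paper to compare your proposal against line by line.

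That said, your outline is the standard functional-analytic route to such results and is essentially correct. A couple of small points worth tightening. First, when you identify $N=\{F\in h^1(\mathbb{R}^n):F=0\text{ a.e.\ on }\Omega\}$ with $h^1_z(\mathbb{R}^n\setminus\overline{\Omega})$ and invoke Lemma~\ref{lem_density_h1_domains}, note that the lemma is stated for a \emph{domain}, whereas $\mathbb{R}^n\setminus\overline{\Omega}$ is open and unbounded and need not be connected; the density statement still holds for arbitrary open sets (the proof is local), but you should say so rather than silently apply the lemma outside its stated hypotheses. Second, the equality $\overline{\mathbb{R}^n\setminus\overline{\Omega}}=\mathbb{R}^n\setminus\Omega$, which you are implicitly using to match $N$ with $h^1_z(\mathbb{R}^n\setminus\overline{\Omega})$, relies on $\Omega$ being a Lipschitz domain (so that $\partial\overline{\Omega}=\partial\Omega$); it would fail for, say, a slit domain. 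With these caveats made explicit, your argument goes through.
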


\begin{theorem} [\cite{Chang}, \cite{jonsson1984hardy}]
\label{thm_bmor_as_dual}
The space $bmo_r(\Omega)$ is isomorphic to the dual of $h^1_z(\Omega)$.
\end{theorem}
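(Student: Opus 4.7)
The plan is to reduce Theorem~\ref{thm_bmor_as_dual} to Goldberg's duality $(h^1(\mathbb{R}^n))^* \cong bmo(\mathbb{R}^n)$ by means of two complementary ingredients: a Hahn--Banach extension on the Hardy side, and a bounded linear extension operator $E:bmo_r(\Omega)\to bmo(\mathbb{R}^n)$ on the BMO side. Once both are in place, the identification follows formally.

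First I would treat the direction $(h^1_z(\Omega))^*\hookrightarrow bmo_r(\Omega)$, which is the cleaner one. Since $h^1_z(\Omega)$ is by definition a closed subspace of $h^1(\mathbb{R}^n)$, any $\Lambda\in (h^1_z(\Omega))^*$ admits, by Hahn--Banach, an extension $\widetilde{\Lambda}\in (h^1(\mathbb{R}^n))^*$ with $\|\widetilde{\Lambda}\|=\|\Lambda\|$. Goldberg's theorem produces $G\in bmo(\mathbb{R}^n)$ representing $\widetilde{\Lambda}$ with $\|G\|_{bmo}\lesssim \|\Lambda\|$. Set $g:=G|_\Omega$. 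Every cube $Q\subset\Omega$ is a cube in $\mathbb{R}^n$, so
\[
\sup_{Q\subset\Omega,\,|Q|\leq 1}\frac{1}{|Q|}\int_Q|g-g_Q|\,dx+\sup_{Q\subset\Omega,\,|Q|>1}\frac{1}{|Q|}\int_Q|g|\,dx \;\leq\; \|G\|_{bmo(\mathbb{R}^n)},
\]
hence $g\in bmo_r(\Omega)$ with $\|g\|_{bmo_r(\Omega)}\lesssim\|\Lambda\|$. Ambiguity in the Hahn--Banach extension is harmless: any two choices of $G$ differ by an element of the annihilator of $h^1_z(\Omega)$, which is supported in $\mathbb{R}^n\setminus\overline{\Omega}$ and so yields the same restriction to $\Omega$.

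For the opposite direction $bmo_r(\Omega)\hookrightarrow (h^1_z(\Omega))^*$ I need the extension operator $E:bmo_r(\Omega)\to bmo(\mathbb{R}^n)$. The construction follows the pattern of Jones's extension for BMO: partition $\mathbb{R}^n\setminus\overline{\Omega}$ into Whitney cubes, pair each Whitney cube with a reflected cube of comparable size inside $\Omega$ (using the Lipschitz character of $\partial\Omega$), and define $Eg$ on $\mathbb{R}^n\setminus\overline{\Omega}$ by a smooth partition-of-unity average of the $g$-means on the reflected cubes, cut off to vanish for distance greater than $1$ from $\overline{\Omega}$ (the cutoff is what controls the large-cube term of the inhomogeneous $bmo$-norm). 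Given such $E$, for $g\in bmo_r(\Omega)$ define $\Lambda_g(f):=\langle f,Eg\rangle_{h^1(\mathbb{R}^n),\,bmo(\mathbb{R}^n)}$ for $f\in h^1_z(\Omega)$; by Goldberg's duality $|\Lambda_g(f)|\lesssim \|Eg\|_{bmo}\,\|f\|_{h^1}\lesssim \|g\|_{bmo_r(\Omega)}\|f\|_{h^1_z(\Omega)}$. Because every $h^1_z(\Omega)$-atom is supported in $\overline{\Omega}$, the value $\Lambda_g(f)$ depends only on $Eg|_\Omega=g$, so $\Lambda_g$ is independent of the particular choice of extension. A density argument via $h^1$-atoms then shows that the two maps $\Lambda\mapsto g$ and $g\mapsto \Lambda_g$ are inverses up to constants.

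The main obstacle is the extension operator $E$. The standard Jones construction is phrased for homogeneous BMO on Lipschitz (even uniform) domains, and one must check that the reflection/averaging procedure also controls the non-homogeneous piece $\sup_{l(Q)\geq 1}\frac{1}{|Q|}\int_Q|Eg|\,dx$. Here the boundedness of $\Omega$ and a cutoff of the extension outside a unit neighbourhood of $\overline{\Omega}$ are used, together with the local $L^1$-type control that $bmo_r(\Omega)$ gives on unit-scale cubes. Once this technical estimate is in hand, the rest of the proof is essentially formal duality.
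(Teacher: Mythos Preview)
The paper does not prove this theorem; it is quoted as a known result with the citations \cite{Chang} and \cite{jonsson1984hardy} and no argument is supplied. So there is no ``paper's own proof'' to compare your proposal against.

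That said, your outline is the standard route to this duality and is essentially correct. The first direction (Hahn--Banach followed by Goldberg's duality and restriction) is clean; your remark that the annihilator of $h^1_z(\Omega)$ vanishes on $\Omega$ is justified by testing against $\mathcal{D}(\Omega)\subset h^1_z(\Omega)$ (Lemma~\ref{lem_density_h1_domains}). For the second direction, the Jones-type extension $E:bmo_r(\Omega)\to bmo(\mathbb{R}^n)$ is exactly what the cited references construct; as you correctly flag, the nontrivial work is verifying that the Whitney reflection controls the inhomogeneous large-cube term, and this is where the boundedness of $\Omega$ and the cutoff at unit distance enter. One small point: when you say $\Lambda_g(f)$ depends only on $Eg|_\Omega$, bear in mind that the $h^1$--$bmo$ pairing is not literally $\int fg$ for arbitrary $f\in h^1$; you should phrase this via atoms (for which the pairing is the integral) and then pass to general $f$ by density, which you do indicate.
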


\subsection{$H^1_z(\Omega)$ space}
We will also need the following function space:
\begin{definition} \cite{CKS}
The space $H^1_z(\Omega)$ is defined to be the subspace of $H^1(\mathbb{R}^n)$ consisting of those elements which are supported on $\overline{\Omega}$.
\end{definition}

One of the alternative ways to define $H^1_z(\Omega)$ is to evoke the notion of atoms.
\begin{definition}
An $H^1_z(\Omega)$ atom is a Lebesgue measurable function $a$ on $\mathbb{R}^n$, supported on a cube $Q\subset \Omega$, such that
\[
\|a\|_{L^2(Q)} \leq |Q|^{-1/2}
\]
and
\[
\int_{Q} a(x) dx = 0.
\]
\end{definition}
Any $H^1_z(\Omega)$ function can be represented as a series of $H^1_z$ atoms in the following sense:
\begin{theorem} [Theorem 3.3 in \cite{CKS}]
\label{thm_CKS_H1_atoms}
Let $\Omega$ be a bounded Lipschitz domain and $f\in L^1(\Omega)$. Then $f\in H^1_z(\Omega)$ if and only if there exist a sequence of $H^1_z(\Omega)$ atoms $\{a_k\}$ and real numbers $\{\lambda_k\} \subset \mathbb{R}$ such that $\sum |\lambda_k| < \infty$ and
\[
\sum_{k} \lambda_k a_k \to f \text{ in } \mathcal{D}'(\Omega).
\]
Furthermore,
\[
\|f\|_{H^1} \approx  \inf\{\sum_k |\lambda_k|: f= \sum_k \lambda_k a_k\},
\]
where the infimum is taken over all atomic decompositions of $f$.
\end{theorem}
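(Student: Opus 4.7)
\medskip
\noindent\textbf{Proof proposal.} The plan is to prove the two inclusions separately; the reverse (``atomic $\Rightarrow H^1_z$'') direction is easy and the forward direction reduces, via a boundary modification, to the classical atomic decomposition of $H^1(\mathbb{R}^n)$.

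For the reverse direction I would first observe that any $H^1_z(\Omega)$ atom $a$ supported on a cube $Q\subset\Omega$ satisfies the defining conditions of a classical $(1,2)$-atom for $H^1(\mathbb{R}^n)$: cube-supported, $L^2$-normalized by $|Q|^{-1/2}$, and mean zero. By the Coifman--Weiss theory there is a universal constant with $\|a\|_{H^1(\mathbb{R}^n)}\leq C$. Hence given $\sum_k|\lambda_k|<\infty$ the series $\sum_k\lambda_k a_k$ converges absolutely in $H^1(\mathbb{R}^n)$, hence in $\mathcal{D}'(\Omega)$, to some $f$ with $\|f\|_{H^1}\leq C\sum_k|\lambda_k|$. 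Because each $a_k$ is supported in $Q_k\subset\overline\Omega$ the limit is also supported in $\overline\Omega$, so $f\in H^1_z(\Omega)$ and one gets the ``$\lesssim$'' half of the norm equivalence.

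For the forward direction I would start from $f\in H^1_z(\Omega)$ and apply the grand maximal function / Calder\'on--Zygmund construction on $\mathbb{R}^n$: perform a Whitney partition of each level set $\{M_Nf>2^k\}$ into cubes $\{Q_{k,j}\}$ and build localized pieces $b_{k,j}$ supported in a fixed dilate $cQ_{k,j}$ with $\|b_{k,j}\|_\infty\lesssim 2^k$, $\int b_{k,j}=0$, and a reconstruction $f=\sum_{k,j}\lambda_{k,j}b_{k,j}$ in $\mathcal{D}'(\mathbb{R}^n)$ with $\sum_{k,j}|\lambda_{k,j}|\lesssim\|f\|_{H^1}$. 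After $L^2$-renormalization each $b_{k,j}$ becomes a classical $(1,2)$-atom. Writing $\mathcal I$ for the indices with $cQ_{k,j}\subset\Omega$ and $\mathcal B$ for the boundary indices, the atoms indexed by $\mathcal I$ already satisfy the $H^1_z(\Omega)$ atom conditions and contribute the expected estimate.

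The main obstacle, and where the Lipschitz structure of $\partial\Omega$ enters, is the treatment of the boundary family $\mathcal B$. Here I would use a finite cover of $\partial\Omega$ by Lipschitz graph charts; in each chart the interior of $\Omega$ contains a uniform cone, so any Whitney-type cube $cQ_{k,j}$ meeting $\Omega^c$ can be translated along a fixed cone direction by a distance comparable to its diameter to obtain a cube $\widetilde Q_{k,j}\subset\Omega$ of comparable size that still contains $cQ_{k,j}\cap\Omega$. Using $f\equiv 0$ on $\Omega^c$ together with the $\int b_{k,j}=0$ identity, one rewrites each boundary piece as a difference of two $L^\infty$-bounded and mean-zero functions supported in $\widetilde Q_{k,j}$; after $L^2$-renormalization these become $H^1_z(\Omega)$ atoms, and the shift introduces only bounded overlap by the Whitney geometry and the finite chart cover, so the coefficient sum $\sum|\lambda_{k,j}|$ is preserved up to a multiplicative constant depending on $\Omega$.

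The hard part of the plan is this last bookkeeping step: verifying that after boundary modification the new atoms still reconstruct $f$ in $\mathcal D'(\Omega)$ without cascading errors, and that the modified coefficients still sum to $O(\|f\|_{H^1})$. Once this is done, the two bounds combine to give the claimed equivalence of norms, and the ``$f\in H^1_z(\Omega)\Leftrightarrow$ atomic representation exists'' characterization. As a consistency check, the duality $bmo_r(\Omega)\cong H^1_z(\Omega)^*$ from Theorem~\ref{thm_bmor_as_dual} is compatible with this atomic characterization, since pairing with an $H^1_z(\Omega)$ atom recovers the standard $bmo_r$ mean-oscillation/size conditions on cubes contained in $\Omega$.
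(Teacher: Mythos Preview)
The paper does not contain a proof of this statement: Theorem~\ref{thm_CKS_H1_atoms} is quoted from \cite{CKS} (as Theorem~3.3 there) in the Preliminaries section and is used as a black box in the proof of Lemma~\ref{lem_temp2}. There is therefore nothing in the present paper to compare your proposal against.

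As a brief comment on the proposal itself: your reverse direction is the standard one-line argument and is fine. For the forward direction, the strategy of running the classical Calder\'on--Zygmund/Whitney construction on $\mathbb{R}^n$ and then pushing the boundary cubes inward along a cone direction coming from the Lipschitz graph structure is a legitimate route and is close in spirit to what is done in \cite{CKS}; the key geometric input you identify (the interior cone condition) is exactly what makes the boundary modification work. The step you flag as ``hard'' --- checking that the modified pieces still reconstruct $f$ in $\mathcal D'(\Omega)$ with controlled coefficient sum --- is indeed where the work lies, and your sketch is thin there: you should make explicit that $b_{k,j}$ restricted to $\Omega$ already has mean zero (since $f=0$ off $\Omega$ forces $\int_{\Omega^c} b_{k,j}=0$ is not automatic; rather one uses that the good part $g_k$ is supported where $f$ is, so the bad pieces near $\partial\Omega$ can be taken supported in $\overline\Omega$ from the outset), and that the bounded-overlap claim after translation requires the Whitney separation together with a uniform bound on the translation distance relative to the cube size. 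These are fixable, but as written the boundary paragraph is more of a plan than a proof.
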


\section{$d^k$ spaces on $\mathbb{R}^n$}

\begin{definition}
Let $1\leq k \leq n$. We say that $u\in \mathcal{D}'(\mathbb{R}^n)$ belongs to $d^k(\mathbb{R}^n)$ if
\begin{equation}
\label{d_k_def}
 \sup_{\|\Phi\|_{\Upsilon^1_k(\mathbb{R}^n)}\leq 1} \max_{|I|=k} |u(\phi_I)| <\infty,
\end{equation}
where the supremum is taken over all $\Phi = \sum_{|I|=k} \phi_I dx^I \in \mathcal{D}^k(\mathbb{R}^n)$ and $\|\Phi\|_{\Upsilon^1_k} = \|\Phi\|_{L^1_k} + \|d \Phi\|_{L^1_{k+1}}$. We will denote this supremum by $\|u\|_{d^k}$.
\end{definition}

\begin{remark}
\label{rem_d_space_upsilon}
It is not difficult to show that the class of compactly supported $\Upsilon^1_k(\mathbb{R}^n)$ forms is dense in $\Upsilon^1_k(\mathbb{R}^n)$. This suggests that the domain of $u\in d^k(\mathbb{R}^n)$ can be extended to include components of all $\Upsilon^1_k(\mathbb{R}^n)$ forms. Let $u\in \mathcal{D}'(\Omega)$ and  $\tilde{u}$ be a linear map from $\mathcal{D}^k(\Omega)$ to $\left(\mathbb{R}^{\binom{n}{k}}, \|\cdot\|_{max}\right)$, associated to $u$ by
\[
\tilde{u}\left(\sum\limits_{|I|=k} \phi_I dx^I\right) = (u(\phi_I)).
\]
Then $u\in \mathcal{D}'(\mathbb{R}^n)$ belongs to $d^k(\mathbb{R}^n)$, if and only if $\tilde{u}$ can be extended to a bounded linear map from $\Upsilon^1_k(\mathbb{R}^n)$ to $\binom{n}{k}$ dimensional Euclidean space equipped with the $max$ norm.
\end{remark}

Note that $\Upsilon^1_n(\mathbb{R}^n) = L^1(\mathbb{R}^n)$, so $d^n(\mathbb{R}^n)$ is isomorphic to $L^\infty(\mathbb{R}^n)$.
\begin{lem}
\label{lem_emb}
Let $1\leq k<l \leq n$ and $u\in d^l(\mathbb{R}^n)$. Then $u \in d^{k}(\mathbb{R}^n)$ and $\|u\|_{d^{k}(\mathbb{R}^n)} \leq \|u\|_{d^{l}(\mathbb{R}^n)}$.
In other words, the following embeddings are continuous
\[
d^n (\mathbb{R}^n) \subset d^{n-1} (\mathbb{R}^n) \subset \cdots \subset d^{1} (\mathbb{R}^n)  \]
\end{lem}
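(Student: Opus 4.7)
The plan is to reduce the control of any single coefficient of an arbitrary $k$-form to the $d^l$-norm by wedging with a constant $(l-k)$-form, which augments the degree without increasing the $\Upsilon^1$-norm.

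Fix $\Phi = \sum_{|I|=k} \phi_I\, dx^I$ with $\|\Phi\|_{\Upsilon^1_k}\le 1$ and fix a multi-index $I_0$ with $|I_0|=k$. Since $|I_0|=k$ and $l\le n$, we have $l-k\le n-k$, so we can pick indices $j_1<\cdots<j_{l-k}$ in $\{1,\dots,n\}\setminus I_0$. The step I would carry out first is to define the $l$-form
\[
\Psi \;=\; \Phi\wedge dx^{j_1}\wedge\cdots\wedge dx^{j_{l-k}}.
\]
Writing $\omega:=dx^{j_1}\wedge\cdots\wedge dx^{j_{l-k}}$, which is a constant (hence closed) form, $\Psi$ is a $\mathcal{D}^l$-form and its coefficients in the canonical basis are, up to a sign, the $\phi_I$ for those $I$ disjoint from $\{j_1,\dots,j_{l-k}\}$, and zero otherwise. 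In particular, for $K_0 = I_0\cup\{j_1,\dots,j_{l-k}\}$ (sorted), the $K_0$-coefficient of $\Psi$ equals $\pm \phi_{I_0}$.

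Next, I would verify that $\|\Psi\|_{\Upsilon^1_l}\le\|\Phi\|_{\Upsilon^1_k}$. For the $L^1$ piece, summing absolute values of coefficients gives
\[
\|\Psi\|_{L^1_l} \;=\; \sum_{\substack{|I|=k\\ I\cap\{j_1,\dots,j_{l-k}\}=\emptyset}} \|\phi_I\|_{L^1} \;\le\; \|\Phi\|_{L^1_k}.
\]
Since $d\omega=0$, the Leibniz rule yields $d\Psi = d\Phi\wedge \omega$, and the same coefficient-matching argument applied to $d\Phi$ gives $\|d\Psi\|_{L^1_{l+1}}\le \|d\Phi\|_{L^1_{k+1}}$. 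Adding the two bounds shows $\|\Psi\|_{\Upsilon^1_l}\le\|\Phi\|_{\Upsilon^1_k}\le 1$.

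The final step is to apply the definition of $\|\cdot\|_{d^l}$ (in the extended sense of Remark \ref{rem_d_space_upsilon}, noting that $\Psi\in\mathcal{D}^l(\mathbb{R}^n)$ is already compactly supported) to obtain
\[
|u(\phi_{I_0})| \;=\; |u(\psi_{K_0})| \;\le\; \max_{|K|=l}|u(\psi_K)| \;\le\; \|u\|_{d^l}\,\|\Psi\|_{\Upsilon^1_l} \;\le\; \|u\|_{d^l}.
\]
Taking the maximum over $I_0$ and the supremum over admissible $\Phi$ yields $\|u\|_{d^k}\le\|u\|_{d^l}$. There is no real obstacle here; the only bookkeeping is making sure the wedge product sorts correctly and that $d\omega=0$, both of which are immediate. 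The argument is essentially the non-homogeneous analog of Van Schaftingen's monotonicity for $D_k$, with the observation that wedging by a constant form is simultaneously bounded on $L^1$ of forms and on $L^1$ of their exterior derivatives, so the $\Upsilon^1_k\to\Upsilon^1_l$ estimate holds with constant $1$.
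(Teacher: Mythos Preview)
Your proof is correct and follows essentially the same approach as the paper: both arguments wedge $\Phi$ by a constant form chosen so that $\phi_{I_0}$ survives as a coefficient of the resulting higher-degree form, and both use that wedging by a constant form does not increase the $\Upsilon^1$-norm. The only cosmetic difference is that the paper reduces to the case $l=k+1$ by induction (wedging by a single $dx^j$ at each step), whereas you wedge directly by $dx^{j_1}\wedge\cdots\wedge dx^{j_{l-k}}$; the content is identical.
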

\begin{proof}
It is enough to consider the case $k=l-1$, because the general case will follow from it by induction. Let $1\leq l\leq n$, $u\in d^l(\mathbb{R}^n)$ and
\[
\Phi(x) = \sum_{|I| = l-1} \phi_{I}(x) dx^{I} \in \mathcal{D}^{l-1}(\mathbb{R}^n).
\]
We need to show for any component $\phi_I$,
\[
|u(\phi_I)|\leq \|u\|_{d^l} \|\Phi\|_{\Upsilon^1_{l-1}}.
\]
Fix any such $I$. Since $|I|=l-1<n$, there exists $j\in [1,n]$ such that $dx^I \wedge dx^j\neq 0$. Put $
 \tilde{\Phi}(x) = \Phi(x) \wedge dx^{j}$.
Then $\tilde{\Phi} \in \mathcal{D}^{l}$ and $\|\tilde{\Phi}\|_{\Upsilon^1_{l}} \leq \|\Phi\|_{\Upsilon^1_{l-1}}$. Moreover, by construction, one of the components of $\tilde{\Phi}$ equals to $\pm \phi_I dx^I\wedge dx^j$. Since $u\in d^l(\mathbb{R}^n)$, we have
\[
|u(\phi_I)| \leq \|u\|_{d^l} \|\tilde{\Phi}\|_{\Upsilon^1_{l}} \leq \|u\|_{d^l} \|\Phi\|_{\Upsilon^1_{l-1}}.
\]
\end{proof}

The following theorem follows immediately from the definition of $d^k$ spaces and the result of Van Schaftingen \cite{Schaftingen2004181}.
\begin{theorem}
\label{thm_VS_my_version}
$W^{1,n}(\mathbb{R}^n)$ is continuously embedded into $d^{n-1}(\mathbb{R}^n)$ as $\exists C>0$ so that for any $u\in W^{1,n}$
\[
\|u\|_{d^{n-1}} \leq C \|u\|_{W^{1,n}}.
\]
\end{theorem}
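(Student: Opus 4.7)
The plan is a short bookkeeping argument that reduces the statement to the $L^1$-vector field estimate of Van Schaftingen \cite{Schaftingen2004181}. First, I would identify each $(n-1)$-form $\Phi = \sum_{|I|=n-1}\phi_I dx^I \in \mathcal{D}^{n-1}(\mathbb{R}^n)$ with the smooth compactly supported vector field $F = (F_1,\ldots,F_n)$ defined by $F_j = (-1)^{j-1}\phi_{I_j}$, where $I_j = \{1,\ldots,n\}\setminus\{j\}$. A direct computation gives $d\Phi = (\operatorname{div} F)\, dx^1\wedge\cdots\wedge dx^n$, so, up to constants depending only on $n$,
\[
\|\Phi\|_{\Upsilon^1_{n-1}} \approx \|F\|_{L^1} + \|\operatorname{div} F\|_{L^1}.
\]

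Next, I would reduce to $u \in \mathcal{D}(\mathbb{R}^n)$ by density in $W^{1,n}(\mathbb{R}^n)$, and then invoke the inequality of Van Schaftingen \cite{Schaftingen2004181}: for any smooth compactly supported vector field $F$ and any $u \in W^{1,n}(\mathbb{R}^n)$,
\[
\Bigl|\int_{\mathbb{R}^n} u F_j \Bigr| \leq C\, \|u\|_{W^{1,n}} \bigl(\|F\|_{L^1} + \|\operatorname{div} F\|_{L^1}\bigr), \quad 1 \leq j \leq n.
\]
Under the dictionary above this is precisely $|u(\phi_I)| \leq C\|u\|_{W^{1,n}} \|\Phi\|_{\Upsilon^1_{n-1}}$ for every $(n-1)$-multi-index $I$ and every test form $\Phi$. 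Taking the supremum over $\|\Phi\|_{\Upsilon^1_{n-1}} \leq 1$ and over $I$ then yields $\|u\|_{d^{n-1}} \leq C\|u\|_{W^{1,n}}$, and the embedding follows.

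The only step that requires attention is verifying that the cited Van Schaftingen inequality covers non-divergence-free $F$ with the $\|\operatorname{div} F\|_{L^1}$ correction on the right. If his statement treats only the case $\operatorname{div} F = 0$, the general case can be recovered via the Helmholtz decomposition $F = F^d + \nabla g$ with $g = \Delta^{-1}(\operatorname{div} F)$: the divergence-free piece $F^d$ is estimated by the classical Van Schaftingen bound $|\int u F^d_j| \leq C\|\nabla u\|_{L^n}\|F^d\|_{L^1}$, while the gradient piece yields $\int u\,\partial_j g = -\int (\partial_j u)\,g$ after integration by parts, which is handled by Hölder's inequality with $\|\partial_j u\|_{L^n}$ against a Sobolev-type norm of $g$ controlled by $\|\operatorname{div} F\|_{L^1}$. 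This last step is the delicate one, and it is precisely where the full inhomogeneous norm $\|u\|_{W^{1,n}}$ is used in place of the homogeneous seminorm $\|\nabla u\|_{L^n}$ that governs the classical embedding $\mathring{W}^{1,n}\subset D_{n-1}$.
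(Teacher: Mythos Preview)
Your reduction of the statement to Van Schaftingen's $L^1$-vector field estimate via the dictionary between $(n-1)$-forms and vector fields is exactly what the paper does: the paper's entire proof is the sentence ``follows immediately from the definition of $d^k$ spaces and the result of Van Schaftingen \cite{Schaftingen2004181}.'' So the core of your plan matches the paper.

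Where you go further than the paper is in worrying about the case $\operatorname{div} F \neq 0$, and here your proposed Helmholtz fix has a genuine gap. With $g = \Delta^{-1}(\operatorname{div} F)$, the Leray projection $F \mapsto F^d = F - \nabla g$ is built out of iterated Riesz transforms $\partial_i\partial_k\Delta^{-1}$, which are Calder\'on--Zygmund operators and are \emph{not} bounded on $L^1$; hence $F^d \notin L^1$ in general, and the Van Schaftingen bound $|\int u F^d_j| \le C\|\nabla u\|_{L^n}\|F^d\|_{L^1}$ cannot be applied. On the gradient side, after integrating by parts you need $g \in L^{n/(n-1)}$ to pair with $\partial_j u \in L^n$, but Hardy--Littlewood--Sobolev only gives $g = I_2(\operatorname{div} F) \in L^{n/(n-2),\infty}$ from $\operatorname{div} F \in L^1$ (weak type at the endpoint), which does not embed into $L^{n/(n-1)}$. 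So neither half of the decomposition is controlled in the way you need, and using the full inhomogeneous norm $\|u\|_{W^{1,n}}$ does not rescue this particular split.

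The extension to $\operatorname{div} F \in L^1$ is nonetheless true, but it is itself a theorem of the Bourgain--Brezis/Van Schaftingen circle rather than a consequence of the naive Helmholtz decomposition: in dual form it is precisely the estimate $\|\nabla \Delta^{-1}F\|_{L^{n/(n-1)}} \le C(\|F\|_{L^1} + \|\operatorname{div} F\|_{L^1})$ (Theorem~$4'$ in \cite{bourgain2007new}, see also Remark~2.1 in \cite{Brezis_Schaftingen}), from which $|\int u F_j| = |\sum_k \int (\partial_k u)(\partial_k \Delta^{-1}F_j)| \le C\|\nabla u\|_{L^n}(\|F\|_{L^1}+\|\operatorname{div} F\|_{L^1})$ follows by H\"older. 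The paper itself invokes exactly this result later in Section~3.6, so citing it here closes the gap cleanly.
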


One of main results in this section is the following
\begin{theorem}
\label{main_thm}
$d^1(\mathbb{R}^n)$ is continuously embedded into the space $bmo(\mathbb{R}^n)$ and $\exists C>0$ so that for any $u\in d^k(\mathbb{R}^n)$, $1\leq k\leq n$
\[
\|u\|_{bmo} \leq C \|u\|_{d^k}.
\]
\end{theorem}
\begin{remark}
This result is a non-homogeneous analogue of the main theorem in \cite{Schaftingen_function_spaces_BMO_Sobolev}. We adapt the proof of that theorem to the non-homogeneous setting.
\end{remark}
\begin{proof}
By Lemma \ref{lem_emb}, it is enough to prove the case $k=1$. The argument is based on the fact that $bmo(\mathbb{R}^n)$ is the dual space of $h^1(\mathbb{R}^n)$. We claim that given $f\in \mathcal{D}(\mathbb{R}^n)$, there exist $n$ differential forms $\{\Phi^j\}_{j=1}^n \subset \Upsilon^1_1(\mathbb{R}^n)$ such that for some $C$ independent of $f$,
\begin{equation}
\label{tempo1}
\|\Phi^j\|_{\Upsilon^1_1} \leq C \|f\|_{h^1},
\end{equation}
\begin{equation}
\label{tempo2}
f=\sum_{i=1}^n \phi^i_i,
\end{equation}
where
\[
\Phi^j = \sum\limits_{i=1}^n \phi^j_i dx^i.
\]
Assuming the claim the proof is easy. Let $u\in d^1(\mathbb{R}^n)$. For arbitrary $f\in \mathcal{D}(\mathbb{R}^n)$, let $\Phi^j$ be such that (\ref{tempo1}) and (\ref{tempo2}) are true. Then by the Remark \ref{rem_d_space_upsilon} we can apply $u$ to $\phi^i_i$ to have
\begin{equation}
\label{temp3}
|u(f)| \leq \sum_{i=1}^n |u(\phi^i_i)|\leq \sum_{i=1}^n \|u\|_{d^1} \|\Phi^i\|_{\Upsilon^1_1} \leq C n \|u\|_{d^1} \|f\|_{h^1}.
\end{equation}
By the density of $\mathcal{D}$ in $h^1$ and the duality $bmo = (h^1)'$,  we conclude that $u\in bmo(\mathbb{R}^n)$.

In order to prove the claim, let $f\in \mathcal{D}$ be arbitrary and consider the equation
\[
(I-\Delta) v = f \text{ in } \mathbb{R}^n.
\]
Then $v=\mathcal{J}(f)$, where $\mathcal{J}$ is a convolution operator whose kernel is the Bessel potential of order 2, $G_2$.
For $j\in [1,n]$, let
\[
\Phi^j  = \sum_{i=1}^n \left( \frac{\mathcal{J} }{n} - \partial_i\partial_j \mathcal{J}\right)(f) dx^i.
\]
Since $f\in \mathcal{D}\subset \mathcal{S}$, all components of $\Phi^j$ are $\mathcal{S}$ functions and
\[
d \Phi^j  = \sum_{1\leq i<k \leq n} \left(\frac{\partial_i \mathcal{J}- \partial_k \mathcal{J}}{n}\right) (f) \ dx^i\wedge dx^k.
\]
It is clear that,
\[
\frac{\mathcal{J}}{n} - \partial_i \partial_j \mathcal{J} \in OPS^{-2}(\mathbb{R}^n) + OPS^0(\mathbb{R}^n) \subset OPS^0(\mathbb{R}^n)
\]
and
\[
\left(\frac{\partial_i \mathcal{J}- \partial_k \mathcal{J}}{n}\right) \in OPS^{-1}(\mathbb{R}^n) \subset OPS^0(\mathbb{R}^n).
\]
Recalling Theorem \ref{thm_goldberg_OPS}, we see that the components of $\Phi^j$ and $d\Phi^j$ are $h^1$ functions and for some $C$ independent of $f$,
\[
\|\Phi^j\|_{L^1_1} + \|d\Phi^j\|_{L^1_2}  \leq C \|f\|_{h^1},
\]
which proves (\ref{tempo1}).
Finally, $\{\Phi^j\}$ satisfy (\ref{tempo2}) for
\[
\sum_{i=1}^n \left( \frac{\mathcal{J} }{n} - \partial_i\partial_i \mathcal{J}\right)f  = \mathcal{J}(f) - \Delta \mathcal{J}(f) = (I-\Delta) \mathcal{J}(f) = f.
\]
\end{proof}
\begin{cor}
For $1\leq k\leq n$, the space $d^k(\mathbb{R}^n)$ equipped with the norm $\|\cdot\|_{d^k}$ is a Banach space.
\end{cor}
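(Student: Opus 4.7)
The plan is first to verify that $\|\cdot\|_{d^k}$ satisfies all norm axioms, then to construct the limit of a Cauchy sequence directly as a distribution and check that the convergence holds in $d^k$ norm.

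Finiteness, positive homogeneity, and subadditivity of $\|\cdot\|_{d^k}$ are immediate from its definition as a supremum over forms. For definiteness, I would invoke Theorem \ref{main_thm}: since $\|u\|_{bmo} \leq C \|u\|_{d^k}$ for every $1 \leq k \leq n$ and the Goldberg $bmo$ norm is itself definite (nonzero constants have nonzero $bmo$ norm, unlike in $\mathrm{BMO}$), $\|u\|_{d^k} = 0$ forces $u = 0$.

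For completeness, the central idea is that any test function embeds as a single component of an admissible $k$-form. Fixing a multi-index $I$ with $|I| = k$ and setting $\Phi = \phi \, dx^I$ for $\phi \in \mathcal{D}(\mathbb{R}^n)$, a short computation of $d\Phi = \sum_{j \notin I} \partial_j \phi \, dx^j \wedge dx^I$ yields $\|\Phi\|_{\Upsilon^1_k} \leq \|\phi\|_{L^1} + \|\nabla \phi\|_{L^1}$. Applied to any Cauchy sequence $\{u_m\} \subset d^k$ this gives
\[
|u_m(\phi) - u_{m'}(\phi)| \leq \|u_m - u_{m'}\|_{d^k}\bigl(\|\phi\|_{L^1} + \|\nabla\phi\|_{L^1}\bigr),
\]
so $\{u_m(\phi)\}$ is Cauchy in $\mathbb{R}$ for every $\phi \in \mathcal{D}$. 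I would set $u(\phi) := \lim_m u_m(\phi)$; linearity is automatic, and the uniform bound $|u_m(\phi)| \leq (\sup_m \|u_m\|_{d^k})(\|\phi\|_{L^1} + \|\nabla\phi\|_{L^1})$ passes to the limit, giving continuity of $u$ on $\mathcal{D}(\mathbb{R}^n)$ and hence $u \in \mathcal{D}'(\mathbb{R}^n)$.

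It remains to show that $u_m \to u$ in $d^k$, which will simultaneously give $u \in d^k$. Given $\varepsilon > 0$, I pick $N$ with $\|u_m - u_{m'}\|_{d^k} \leq \varepsilon$ for $m, m' \geq N$. For any form $\Phi \in \mathcal{D}^k(\mathbb{R}^n)$ with $\|\Phi\|_{\Upsilon^1_k} \leq 1$ and any component $\phi_I$, letting $m' \to \infty$ in the numerical inequality $|u_m(\phi_I) - u_{m'}(\phi_I)| \leq \varepsilon$ yields $|u_m(\phi_I) - u(\phi_I)| \leq \varepsilon$. Taking the supremum over $I$ and $\Phi$ gives $\|u_m - u\|_{d^k} \leq \varepsilon$ for $m \geq N$, so $u \in d^k$ (by the triangle inequality applied to $u = u_N + (u - u_N)$) and $u_m \to u$ in $d^k$. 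I do not anticipate a real obstacle; the only delicate point is the embedding $\phi \mapsto \phi \, dx^I$, which is what allows one to transfer Cauchyness in the $d^k$ norm to Cauchyness against each individual test function.
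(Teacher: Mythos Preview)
Your argument is correct. The route for completeness, however, differs from the paper's. The paper invokes Theorem \ref{main_thm} to push the Cauchy sequence into $bmo(\mathbb{R}^n)$, uses completeness of $bmo$ to locate the limit $u$ there, and then checks $d^k$-convergence via the $h^1$--$bmo$ duality (each $\phi_I\in\mathcal{D}\subset h^1$). You instead construct the limit directly in $\mathcal{D}'(\mathbb{R}^n)$ by the single-component embedding $\phi\mapsto\phi\,dx^I$, bypassing both the embedding into $bmo$ and the duality machinery. Your approach is more elementary and in fact shows that completeness of $d^k$ does not depend on Theorem \ref{main_thm} at all; the same embedding trick would also give definiteness of the norm without appealing to $bmo$, since $\|u\|_{d^k}=0$ forces $u(\phi)=0$ for every $\phi\in\mathcal{D}$. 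The paper's approach, on the other hand, has the side benefit of immediately identifying the limit as a $bmo$ function. The final verification step---passing to the limit in $|u_m(\phi_I)-u_{m'}(\phi_I)|\le \|u_m-u_{m'}\|_{d^k}\|\Phi\|_{\Upsilon^1_k}$---is the same in both arguments.
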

\begin{proof}
Let $\{u_m\}_{m=0}^\infty$ be a Cauchy sequence in $d^k$. The above theorem shows that $u_m$ is a Cauchy sequence in $bmo(\mathbb{R}^n)$. Since $bmo$ is a complete Banach space, there exists $u \in bmo(\mathbb{R}^n)$, such that $u_m\to u$ in $\|\cdot\|_{bmo}$. Moreover, for any $\Phi=\sum_{|I|=k} \phi_I dx^I \in \mathcal{D}^k(\mathbb{R}^n)$ and $j\geq 0$, using duality of $bmo$ and $h^1$ and the fact that each $\phi_I\in \mathcal{D}\subset h^1$,
\[
\label{temp_cor}
\left|\int (u_j - u) \phi_I \right| = \lim\limits_{m\to \infty } \left|\int (u_j - u_m) \phi_I  \right| \leq
\]
\[
\leq \lim\limits_{m\to \infty} \|u_j - u_m\|_{d^k} \|\Phi\|_{\Upsilon^1_k},
\]
which shows that  $u\in d^k(\mathbb{R}^n)$, and $\|u_j - u\|_{d^k} \to 0$, as $j\to \infty$.
\end{proof}
Summing up the results of this section, we can now say that for $1\leq k\leq n$,
\[
W^{1,n}(\mathbb{R}^n) \subset d^{n-1}(\mathbb{R}^n) \subset \cdots \subset d^1(\mathbb{R}^n) \subset bmo(\mathbb{R}^n).
\]

\subsection{$v^k$ classes}
\begin{definition}
Let $1\leq k\leq n$. We define the class $v^k(\mathbb{R}^n)$ as the closure of $C_0(\mathbb{R}^n)$ functions in the norm $\|\cdot\|_{d^k}$. Here
\[
C_0(\mathbb{R}^n) = \{u:\in C(\mathbb{R}^n): \lim\limits_{|x|\to \infty} u(x) = 0\}.
\]
\end{definition}
First of all we notice that by Proposition \ref{lem_emb}, $v^k(\mathbb{R}^n)$ form a monotone family of spaces
\[
v^n(\mathbb{R}^n) \subset v^{n-1}(\mathbb{R}^n) \subset \cdots \subset v^1(\mathbb{R}^n).
\]
The appropriate subspace that will contain all $v^k$ functions was studied by Dafni \cite{Dafni} and Bourdaud \cite{bourdaud2002bmo}.
\begin{definition}\cite{Dafni}
$vmo(\mathbb{R}^n)$ is the subspace of $bmo(\mathbb{R}^n)$ functions satisfying
\begin{equation}
\label{vmo_cond_1}
\lim_{\delta\to 0} \sup\limits_{l(Q)\leq \delta} \frac{1}{|Q|} \int\limits_{Q} |f(x) - f_Q| dx =0
\end{equation}
and
\begin{equation}
\label{vmo_cond_2}
\lim\limits_{R \to \infty} \sup\limits_{l(Q)>1, Q\cap B(0,R)=\emptyset} \frac{1}{|Q|} \int\limits_{Q} |f(x)| dx =0.
\end{equation}
\end{definition}
\begin{theorem} [\cite{Dafni}] $vmo(\mathbb{R}^n)$ is the closure of $C_0(\mathbb{R}^n)$  in $bmo(\mathbb{R}^n)$.
\end{theorem}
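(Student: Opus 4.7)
The plan is to prove the two inclusions $C_0(\mathbb{R}^n)\subset vmo(\mathbb{R}^n)$ (combined with closedness of $vmo$ in $bmo$) and $vmo(\mathbb{R}^n)\subset \overline{C_0(\mathbb{R}^n)}^{\|\cdot\|_{bmo}}$ separately.

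For the first, any $f\in C_0$ is bounded (hence in $bmo$) and uniformly continuous, so \eqref{vmo_cond_1} follows from uniform continuity (given $\eta>0$, pick $\delta$ so that $|f(x)-f(y)|<\eta$ whenever $|x-y|<\delta\sqrt{n}$, making the oscillation on any cube of side $\leq \delta$ bounded by $\eta$), while \eqref{vmo_cond_2} follows from decay at infinity. A standard three-$\varepsilon$ argument then shows $vmo$ is closed in $bmo$: if $f_k\to f$ in $bmo$ with each $f_k\in vmo$, the vanishing quantities for $f$ are dominated by $2\|f-f_k\|_{bmo}$ plus the corresponding quantities for $f_k$, each of which can be made arbitrarily small.

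For the reverse inclusion, given $f\in vmo$ I would construct an approximating sequence via mollification and smooth truncation. Fix a smooth approximate identity $\{\phi_\varepsilon\}$ and cutoffs $\chi_R\in\mathcal{D}(\mathbb{R}^n)$ with $\chi_R\equiv 1$ on $B(0,R)$, $\operatorname{supp}\chi_R\subset B(0,2R)$ and $|\nabla\chi_R|\lesssim 1/R$. Set $g_{\varepsilon,R}:=\chi_R(f*\phi_\varepsilon)\in\mathcal{D}(\mathbb{R}^n)\subset C_0(\mathbb{R}^n)$. The claim is that for a suitable diagonal pairing $R=R(\varepsilon)\to\infty$ as $\varepsilon\to 0$, one has $g_{\varepsilon,R}\to f$ in the $bmo$ norm. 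Decompose the error
\[
f - g_{\varepsilon,R} = (f - f*\phi_\varepsilon) + (1-\chi_R)(f*\phi_\varepsilon),
\]
and control each piece using the two $vmo$ moduli
\[
\omega(t) := \sup_{l(Q)\leq t}\frac{1}{|Q|}\int_Q |f-f_Q|\,dx, \qquad \rho(R) := \sup_{\substack{l(Q)>1\\ Q\cap B(0,R)=\emptyset}}\frac{1}{|Q|}\int_Q |f|\,dx,
\]
which by hypothesis tend to zero as $t\to 0$ and $R\to\infty$ respectively.

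The main obstacle is making the estimate $\|f-f*\phi_\varepsilon\|_{bmo}\to 0$ quantitative in terms of $\omega$ and $\rho$. Writing $(f*\phi_\varepsilon)(x)-f(x)=\int \phi_\varepsilon(y)(f(x-y)-f(x))\,dy$, on a small cube $Q$ with $l(Q)\leq 1$ one bounds the oscillation of the difference by comparing $f(\cdot-y)$ to averages over enlarged cubes, obtaining a control of order $\omega(l(Q)+\varepsilon)$; on a large cube with $l(Q)>1$ one splits according to whether $Q$ is far from or close to the origin, using $\rho$ in the former case and local $L^1$-convergence of the mollification in the latter. The truncation term $(1-\chi_R)(f*\phi_\varepsilon)$ is supported in $\{|x|\geq R\}$, so its $bmo$-norm is controlled by $\rho(R-\varepsilon)$ together with the boundedness of multiplication by the smooth cutoff $\chi_R$ on $bmo$ (a standard consequence of $\chi_R\in\mathrm{OPS}^0$). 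Choosing $R(\varepsilon)\to\infty$ slowly enough relative to $\varepsilon\to 0$ makes both pieces simultaneously tend to zero, completing the approximation.
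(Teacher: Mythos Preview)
First, note that the paper itself gives no proof of this statement; it is quoted from \cite{Dafni}, so there is no in-paper argument to compare against. I therefore comment on your sketch on its own merits.

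Your treatment of the inclusion $\overline{C_0}^{\,bmo}\subset vmo$ is fine, and the mollification estimate $\|f-f*\phi_\varepsilon\|_{bmo}\to 0$ is essentially correct. In fact both the small-cube oscillation and the large-cube average of $f-f*\phi_\varepsilon$ are bounded by $C\,\omega(C\varepsilon)$ uniformly over all cubes: tile any cube $Q$ by subcubes $Q_i$ of side $\varepsilon$ and compare $f(\cdot)$ and $f(\cdot-y)$, for $|y|\le\varepsilon$, to the average $f_{3Q_i}$. So the split ``near/far from the origin'' and the appeal to local $L^1$-convergence for large cubes are unnecessary detours.

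The genuine gap is in the truncation step. You claim that $\|(1-\chi_R)(f*\phi_\varepsilon)\|_{bmo}$ is controlled by $\rho(R-\varepsilon)$ ``together with the boundedness of multiplication by $\chi_R$ on $bmo$''. But multiplication boundedness only yields $\|(1-\chi_R)(f*\phi_\varepsilon)\|_{bmo}\lesssim\|f*\phi_\varepsilon\|_{bmo}\lesssim\|f\|_{bmo}$, which is not small; and $\rho$ only governs averages over \emph{large} cubes far from the origin. The problematic case is the oscillation over a small cube $Q$ with $l(Q)\le 1$ contained in $\{|x|>2R\}$: there $(1-\chi_R)(f*\phi_\varepsilon)=f*\phi_\varepsilon$, whose oscillation over $Q$ can a priori be as large as $\omega(1)$, and neither of your two ingredients touches this. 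To bound it you would have to observe that for $|x|$ large,
\[
|(f*\phi_\varepsilon)(x)|\le\|\phi_\varepsilon\|_\infty\int_{Q'}|f|\le C_\varepsilon\,\rho(|x|-c),
\]
where $Q'$ is any cube of side $>1$ containing $B_\varepsilon(x)$. But this says precisely that $f*\phi_\varepsilon\in C_0(\mathbb{R}^n)$, so the cutoff $\chi_R$ and the diagonal choice $R=R(\varepsilon)$ are superfluous. The clean argument is simply: condition \eqref{vmo_cond_2} forces $f*\phi_\varepsilon\in C_0$, while condition \eqref{vmo_cond_1} gives $\|f-f*\phi_\varepsilon\|_{bmo}\lesssim\omega(C\varepsilon)\to 0$.
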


An immediate consequence of this result and Theorem \ref{main_thm} is
\begin{theorem}
For $1\leq k\leq n$, the space $v^k(\mathbb{R}^n)$ is embedded into $vmo(\mathbb{R}^n)$.
\end{theorem}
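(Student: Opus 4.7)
The plan is to combine Theorem \ref{main_thm} with Dafni's characterization of $vmo(\mathbb{R}^n)$ as the $bmo$-closure of $C_0(\mathbb{R}^n)$. Once this is set up, the embedding will follow because the $d^k$-norm dominates the $bmo$-norm, so approximation in $d^k$ automatically yields approximation in $bmo$.

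First I would check that the definition of $v^k(\mathbb{R}^n)$ is consistent, i.e.\ that $C_0(\mathbb{R}^n) \subset d^k(\mathbb{R}^n)$. Since $C_0(\mathbb{R}^n) \subset L^\infty(\mathbb{R}^n)$ and $d^n(\mathbb{R}^n) \cong L^\infty(\mathbb{R}^n)$, Lemma \ref{lem_emb} gives $C_0(\mathbb{R}^n) \subset d^n(\mathbb{R}^n) \subset d^k(\mathbb{R}^n)$ for every $1\leq k\leq n$, so taking the closure in $\|\cdot\|_{d^k}$ makes sense.

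Next, let $u\in v^k(\mathbb{R}^n)$. By definition there is a sequence $\{u_m\}\subset C_0(\mathbb{R}^n)$ with $\|u_m-u\|_{d^k}\to 0$. Applying Theorem \ref{main_thm} to the difference (which lies in $d^k(\mathbb{R}^n)$) yields
\[
\|u_m - u\|_{bmo} \;\leq\; C\,\|u_m-u\|_{d^k} \;\longrightarrow\; 0,
\]
so $u_m \to u$ in $bmo(\mathbb{R}^n)$. Since $C_0(\mathbb{R}^n)\subset vmo(\mathbb{R}^n)$ and, by Dafni's theorem quoted above, $vmo(\mathbb{R}^n)$ is closed in $bmo(\mathbb{R}^n)$ (being the $bmo$-closure of $C_0(\mathbb{R}^n)$), the limit $u$ lies in $vmo(\mathbb{R}^n)$. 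The continuity of the embedding is immediate from the same inequality $\|u\|_{bmo}\leq C\|u\|_{d^k}$ that already appears in Theorem \ref{main_thm}.

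There is essentially no obstacle here: the statement is a soft consequence of having identified $v^k$ and $vmo$ as closures of the same dense class $C_0(\mathbb{R}^n)$ in comparable norms, with the $d^k$-norm being the stronger one by Theorem \ref{main_thm}. The only mild point to keep in mind is that one needs $vmo$ to be closed in $bmo$, which is automatic since it is itself defined as a closure.
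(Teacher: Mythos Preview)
Your argument is correct and matches the paper's approach exactly: the paper states the theorem as ``an immediate consequence'' of Dafni's characterization of $vmo(\mathbb{R}^n)$ as the $bmo$-closure of $C_0(\mathbb{R}^n)$ together with Theorem \ref{main_thm}, and you have simply written out that immediate consequence in full detail.
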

\begin{cor}
$v^1(\mathbb{R}^n)$ does not contain $d^n(\mathbb{R}^n)$ as a subspace.
In particular, $v^k(\mathbb{R}^n)$ are proper subspaces of $d^k(\mathbb{R}^n)$ for $k=1,\dots,n$.
\end{cor}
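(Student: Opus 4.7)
The plan is to exploit two facts already established in this section: (i) $d^n(\mathbb{R}^n)$ is isomorphic to $L^\infty(\mathbb{R}^n)$, which is noted right after Remark \ref{rem_d_space_upsilon} as an immediate consequence of $\Upsilon^1_n(\mathbb{R}^n) = L^1(\mathbb{R}^n)$; and (ii) by the theorem just before this corollary, $v^1(\mathbb{R}^n) \subset vmo(\mathbb{R}^n)$. Together these reduce the first assertion to exhibiting a single bounded function that is not in $vmo(\mathbb{R}^n)$.

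The natural witness I would take is the characteristic function $f = \chi_H$ of the half-space $H = \{x\in\mathbb{R}^n : x_1>0\}$. Then $f\in L^\infty(\mathbb{R}^n)$, so under the identification $d^n\cong L^\infty$ we have $f\in d^n(\mathbb{R}^n)$. To see $f\notin vmo(\mathbb{R}^n)$, consider the cubes $Q_\delta = [-\delta/2,\delta/2]^n$ straddling the bounding hyperplane. A routine volume computation gives $f_{Q_\delta} = 1/2$ and
\[
\frac{1}{|Q_\delta|}\int_{Q_\delta}|f(x) - f_{Q_\delta}|\,dx = \frac{1}{2}
\]
for every $\delta>0$. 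Thus the limit in condition (\ref{vmo_cond_1}) is at least $1/2$, so $f\notin vmo(\mathbb{R}^n)$. Combined with $v^1\subset vmo$, this produces $f\in d^n(\mathbb{R}^n)\setminus v^1(\mathbb{R}^n)$, proving that $v^1$ does not contain $d^n$.

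For the ``in particular'' part, I would combine the witness $f$ with the two monotone chains already noted. Lemma \ref{lem_emb} gives $d^n(\mathbb{R}^n)\subset d^k(\mathbb{R}^n)$, so $f\in d^k(\mathbb{R}^n)$ for every $k$. Conversely, the inclusion chain $v^n\subset\cdots\subset v^1$ (noted right after the definition of $v^k$) gives $v^k\subset v^1$, and since $f\notin v^1$ we get $f\notin v^k$. Thus $f\in d^k\setminus v^k$, which yields the strict inclusion $v^k\subsetneq d^k$ for each $k=1,\dots,n$.

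There is no substantive obstacle here: once the isomorphism $d^n\cong L^\infty$ and the embedding $v^1\subset vmo$ are in hand, the argument is a matter of exhibiting a bounded function whose mean oscillation does not vanish on arbitrarily small cubes, for which the half-space indicator is the textbook example. The only minor point to check is that failure of (\ref{vmo_cond_1}) alone is sufficient to exclude $f$ from $vmo$, which is immediate from the definition.
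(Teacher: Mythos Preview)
Your proof is correct and follows essentially the same approach as the paper: identify $d^n$ with $L^\infty$, use the embedding $v^1\subset vmo$, and exhibit a bounded function violating condition~(\ref{vmo_cond_1}). The only cosmetic differences are that the paper uses the characteristic function of the positive orthant rather than the half-space, and that your treatment of the ``in particular'' clause (via the two monotone chains) is more explicit than the paper's, which leaves that step to the reader.
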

\begin{proof}
Recall that $d^n(\mathbb{R}^n)$ coincides with $L^\infty(\mathbb{R}^n)$. If $L^\infty$ was a subspace of $v^1(\mathbb{R}^n)$, then by the last theorem we would have $L^\infty \subset  vmo(\mathbb{R}^n)$. However, choosing $f$ as a characteristic function of the quadrant $\{x=(x_1,\dots,x_n)\in \mathbb{R}^n: x_i>0\}$, we have an example of an $L^\infty$ function that does not satisfy (\ref{vmo_cond_1}). So $L^\infty \not\subset vmo(\mathbb{R}^n)$.
\end{proof}
Finally, we recall that $\mathcal{D}(\mathbb{R}^n)$ is dense in $W^{1,p}(\mathbb{R}^n)$ for any $p\in [1,\infty)$. Therefore by  Theorem \ref{thm_VS_my_version}, we have $W^{1,n} \subset v^{n-1}(\mathbb{R}^n)$.

All in all, we conclude that the following embeddings hold
\[
W^{1,n}(\mathbb{R}^n) \subset v^{n-1}(\mathbb{R}^n) \subset \dots \subset v^{1}(\mathbb{R}^n) \subset vmo(\mathbb{R}^n).
\]

\subsection{Intrinsic definition of the space $v^{n-1}$}
\begin{definition}
For $u\in d^{n-1}(\mathbb{R}^n) \cap C(\mathbb{R}^n)$, we will use the following notation
\[
\|u\|_*  = \sup\limits_{\partial \gamma=\emptyset}\frac{1}{|\gamma|} \left| \int_\gamma u(t) \tau(t) dt \right| + \sup\limits_{|\gamma|\geq 1}\frac{1}{|\gamma|} \left| \int_\gamma u(t) \tau(t) dt \right|,
\]
where the suprema are taken over smooth curves $\gamma$ with finite lengths $|\gamma|$, boundaries $\partial \gamma$ and  $\tau$ is the unit tangent vector to the curve $\gamma$.
\end{definition}

Our goal is to prove the following result.
\begin{theorem}
\label{thm_from_Smirnov}
There are constants $c_1,c_2>0$ such that for every $u\in d^{n-1}(\mathbb{R}^n)\cap C(\mathbb{R}^n)$,
\[
c_1 \|u\|_* \leq \|u\|_{d^{n-1}} \leq c_2 \|u\|_*.
\]
\end{theorem}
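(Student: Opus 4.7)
The plan is to prove both bounds via the standard identification of $(n-1)$-forms with vector fields on $\mathbb{R}^n$: to $\Phi\in\mathcal{D}^{n-1}(\mathbb{R}^n)$ I associate the vector field $\phi=(\phi_1,\dots,\phi_n)$ through $\Phi=\sum_{i=1}^n(-1)^{i-1}\phi_i\,dx^1\wedge\cdots\wedge\widehat{dx^i}\wedge\cdots\wedge dx^n$, so that $d\Phi=(\operatorname{div}\phi)\,dx^1\wedge\cdots\wedge dx^n$ and $\|\Phi\|_{\Upsilon^1_{n-1}}$ is comparable to $\|\phi\|_{L^1}+\|\operatorname{div}\phi\|_{L^1}$. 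The two inequalities rely on complementary tools: mollification of the tangent current of a single curve for the bound $\|u\|_*\leq c_1^{-1}\|u\|_{d^{n-1}}$, and Smirnov's decomposition of $L^1$ vector fields with integrable divergence for the reverse direction.

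For the first inequality, given a smooth curve $\gamma$ I would set $\phi^\epsilon(x):=\int_\gamma\rho_\epsilon(x-y)\vec\tau(y)\,d\mathcal{H}^1(y)$, a smooth compactly supported vector field. Direct computation yields $\|\phi^\epsilon_i\|_{L^1}\leq|\gamma|$, and $\operatorname{div}\phi^\epsilon=\rho_\epsilon*(\delta_a-\delta_b)$ if $\gamma$ has endpoints $a,b$ (and vanishes if $\gamma$ is closed), so $\|\operatorname{div}\phi^\epsilon\|_{L^1}\leq 2$. Applying the definition of $\|u\|_{d^{n-1}}$ to the associated $(n-1)$-form gives $|u(\phi^\epsilon_i)|\leq\|u\|_{d^{n-1}}(n|\gamma|+2)$, and letting $\epsilon\to 0$ (using continuity of $u$ to pass $u(\phi^\epsilon_i)\to\int_\gamma u\tau_i\,dt$) yields $|\int_\gamma u\tau_i\,dt|\leq(n|\gamma|+2)\|u\|_{d^{n-1}}$. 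Dividing by $|\gamma|$ and restricting either to closed curves (where the boundary term is absent) or to curves of length $\geq 1$ controls both suprema in $\|u\|_*$ by $\|u\|_{d^{n-1}}$ up to a dimensional constant.

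For the reverse inequality, the main tool will be Smirnov's decomposition theorem, which states that any $\phi\in L^1(\mathbb{R}^n;\mathbb{R}^n)$ with $\operatorname{div}\phi\in L^1$ admits a representation $\int_{\mathbb{R}^n}\psi\phi_i\,dx=\int_\Gamma\bigl(\int_\gamma\psi\,\tau_i\,d\mathcal{H}^1\bigr)\,d\nu(\gamma)$ (valid for bounded continuous $\psi$) against a finite positive Radon measure $\nu$ on oriented rectifiable curves, with $\int_\Gamma|\gamma|\,d\nu=\|\phi\|_{L^1}$ and $2\,\nu(\{\gamma:\partial\gamma\neq\emptyset\})=\|\operatorname{div}\phi\|_{L^1}$. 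Taking a smooth compactly supported $\Phi$ with $\|\Phi\|_{\Upsilon^1_{n-1}}\leq 1$ and setting $\psi=u$, I would partition $\Gamma$ into (i) closed curves, (ii) open curves with $|\gamma|\geq 1$, and (iii) short open curves with $|\gamma|<1$; on cases (i) and (ii), the inner integral is bounded by $|\gamma|\,\|u\|_*$ directly from the definition of $\|u\|_*$.

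The main obstacle is case (iii), since $\|u\|_*$ does not directly control line integrals on short open curves. The remedy is a geometric completion argument: for $\gamma$ from $a$ to $b$ with $|\gamma|<1$, join $\gamma$ to a short smooth arc $\sigma$ from $b$ back to $a$ (of length $\leq|\gamma|$) to form a closed curve of length $<2$, whose line integral is bounded by $2\|u\|_*$ via the first supremum in $\|u\|_*$; then handle the residual $\int_\sigma u\tau_i\,dt$ by inserting an auxiliary detour $\sigma'$ from $b$ to $a$ of length in $[1,2]$ and writing $\int_\sigma u\tau_i\,dt=\int_{\sigma'}u\tau_i\,dt+\int_{\sigma\cup(-\sigma')}u\tau_i\,dt$, where the first integral is over an admissible long curve and the second over an admissible closed curve. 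This yields a uniform bound $|\int_\gamma u\tau_i\,dt|\leq C\|u\|_*$ for short open curves. Combining the three cases,
\[
|u(\phi_i)|\leq\|u\|_*\int_\Gamma|\gamma|\,d\nu+C\|u\|_*\,\nu(\{\gamma:\partial\gamma\neq\emptyset\})\leq C'\|u\|_*\bigl(\|\phi\|_{L^1}+\|\operatorname{div}\phi\|_{L^1}\bigr)\leq C'\|u\|_*,
\]
and the density of smooth compactly supported forms in $\Upsilon^1_{n-1}$ (together with the extension discussed in Remark \ref{rem_d_space_upsilon}) then gives $\|u\|_{d^{n-1}}\leq c_2\|u\|_*$.
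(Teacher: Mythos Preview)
Your proposal is correct and follows essentially the same route as the paper: mollification of the tangent current for the easy direction, Smirnov's decomposition plus a geometric completion of short open curves for the hard direction. The only cosmetic differences are that the paper uses the sequential formulation of Smirnov (splitting $\Phi=P+Q$ into solenoidal and acyclic parts via Theorems~\ref{thm_Smirnov_B}--\ref{thm_Smirnov_C}) and packages the short-curve argument through an auxiliary norm $\|u\|_{**}$, completing a short $\gamma$ in a single step with one arc of length in $(1,2)$ rather than your two-step $\sigma,\sigma'$ construction.
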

The proof is based on the following three lemmas
\begin{lem}
There exists $C>0$ such that for any $\gamma$ with $\partial \gamma=\emptyset$ or $|\gamma|\geq 1$,
\[
\frac{1}{|\gamma|}\left| \int_\gamma u(y) \tau(y) dy \right| \leq C \|u\|_{d^{n-1}}.
\]
\end{lem}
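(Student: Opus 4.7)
The plan is to realize the line integral against $\gamma$ as a pairing $u(V_i)$ for some smooth compactly supported function $V_i$ that sits as a component of a test $(n-1)$-form $\Phi$ with controlled $\Upsilon^1_{n-1}$ norm, and then invoke the definition of $\|u\|_{d^{n-1}}$ (extended as in Remark \ref{rem_d_space_upsilon}).

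First I would encode $\gamma$ as the vector-valued measure $\vec{\gamma} = \tau \, d\mathcal{H}^1\llcorner\gamma$ on $\mathbb{R}^n$ and mollify: pick a nonnegative radial $\eta\in\mathcal{D}(\mathbb{R}^n)$ with $\int\eta=1$ and set $\vec V^{\,\epsilon}=\eta_\epsilon *\vec{\gamma}$, where $\eta_\epsilon(x)=\epsilon^{-n}\eta(x/\epsilon)$. Since $\gamma$ has finite length, each component $V^\epsilon_i$ lies in $\mathcal{D}(\mathbb{R}^n)$, with $\|V^\epsilon_i\|_{L^1}\le\int_\gamma|\tau_i|\,dy\le|\gamma|$, so the $(n-1)$-form
\[
\Phi^\epsilon \;=\; \sum_{i=1}^n (-1)^{i-1} V^\epsilon_i\, dx^1\wedge\cdots\widehat{dx^i}\cdots\wedge dx^n
\]
satisfies $\|\Phi^\epsilon\|_{L^1_{n-1}}\le n|\gamma|$.

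The decisive observation is that $d\Phi^\epsilon = (\operatorname{div}\vec V^{\,\epsilon})\,dx^1\wedge\cdots\wedge dx^n = \eta_\epsilon*(\operatorname{div}\vec\gamma)\,dx^1\wedge\cdots\wedge dx^n$. When $\partial\gamma=\emptyset$ the current $\vec\gamma$ is a cycle, so $\operatorname{div}\vec\gamma=0$ and $d\Phi^\epsilon=0$; when $\partial\gamma=\{b\}-\{a\}$ one has $\operatorname{div}\vec\gamma=\delta_a-\delta_b$, hence $\|d\Phi^\epsilon\|_{L^1_n}\le 2$. In both cases we get, under the standing hypothesis $\partial\gamma=\emptyset$ or $|\gamma|\ge 1$,
\[
\|\Phi^\epsilon\|_{\Upsilon^1_{n-1}} \;\le\; n|\gamma|+2 \;\le\; (n+2)|\gamma|.
\]
Applying the definition of $\|u\|_{d^{n-1}}$ (componentwise) yields $|u(V^\epsilon_i)|\le (n+2)|\gamma|\,\|u\|_{d^{n-1}}$ for every $i$.

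Finally I would pass to the limit $\epsilon\to 0$. Since $u$ is continuous and $\gamma$ is a compact smooth curve, Fubini gives $\int_{\mathbb{R}^n} u\,V^\epsilon_i\,dx = \int_\gamma \tau_i(y)(u*\eta_\epsilon)(y)\,dy$, and $u*\eta_\epsilon\to u$ uniformly on a neighbourhood of $\gamma$, so the right side tends to $\int_\gamma u\,\tau_i\,dy$. Taking the maximum over $i$ and dividing by $|\gamma|$ gives $|\gamma|^{-1}|\int_\gamma u\tau\,dy|\le (n+2)\|u\|_{d^{n-1}}$, as required. The only delicate point is ensuring $d\Phi^\epsilon$ really has $L^1$-norm controlled by a constant independent of $|\gamma|$; this is precisely what the two-case hypothesis buys us, since a closed curve contributes no boundary and a long curve absorbs the fixed mass $2$ coming from the two endpoints.
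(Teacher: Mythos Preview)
Your proof is correct and is essentially the same as the paper's: both mollify the tangent measure of $\gamma$ to produce a compactly supported $(n-1)$-form $\Phi^\epsilon$ whose exterior derivative is $(\eta_\epsilon(a-\cdot)-\eta_\epsilon(b-\cdot))\,dx^1\wedge\cdots\wedge dx^n$, yielding $\|\Phi^\epsilon\|_{\Upsilon^1_{n-1}}\le n|\gamma|+2$ and hence the constant $C=n+2$. The only difference is packaging---you phrase the construction in the language of currents ($\operatorname{div}\vec\gamma=\delta_a-\delta_b$) while the paper writes out the integrals explicitly.
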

\begin{proof}
The proof is based on the argument of Bourgain and Brezis \cite{BB_New_estimates_Laplacian_Hodge_systems}.

Let $\eta\geq 0$ be a smooth radial function on $\mathbb{R}^n$, compactly supported in $|x|\leq 1$, such that $\|\eta\|_{L^1} = 1$.  As usual we put $\eta_\epsilon(x) = \epsilon^{-n} \eta(x/\epsilon)$. Let us define the $(n-1)$-form
\[
\Phi^\epsilon(x) = \sum_{j=1}^n \left( \int_\gamma \eta_\epsilon(t-x) \tau_j(t) dt\right) dx^{I_j}, x\in \mathbb{R}^n,
\]
where $I_j=(i_1,\dots,i_{n-1}), i_k\neq j$.

The reason to introduce this differential form is the following equality
\[
\left| \int_\gamma u(t) \tau(t)   dt\right| = \lim\limits_{\epsilon\to 0} \left| \int_\gamma \tau(t) \int_{\mathbb{R}^n} u(x) \eta_\epsilon(x-t) dx  dt\right| =
\]
\[
\lim\limits_{\epsilon\to 0} \left| \int u(x) \Phi^\epsilon(x) dx \right|.
\]
By the Remark \ref{rem_d_space_upsilon}, we need to estimate $\|\Phi^\epsilon\|_{\Upsilon^1_{n-1}}$.
It is clear that $\|\Phi^\epsilon\|_{L^1_{n-1}} \leq n \|\eta_\epsilon\|_{L^1} |\gamma|= n |\gamma|$.
Moreover,
\[
d\Phi^\epsilon(x) = -\left( \int_\gamma \nabla\eta_\epsilon(y-x)\cdot \tau(y) dy \right) dx^1 \wedge \dots \wedge dx^n =
\]
\[
= [\eta_\epsilon(a-x) - \eta_\epsilon(b-x)] dx^1 \wedge \dots \wedge dx^n.
\]
Therefore $\|d\Phi^\epsilon\|_{L^1_{n}}$ is $0$ if $\gamma$ is closed or $\leq 2$ if $\gamma$ is not closed.
Finally,
\[
\frac{1}{|\gamma|} \left| \int_\gamma u(s) \tau(s)   ds\right| \leq \frac{1}{|\gamma|} \limsup_{\epsilon\to 0}\left| \int u(x) \Phi^\epsilon dx \right| \leq \|u\|_{d^k} (2+n),
\]
because, for non-closed $\gamma$, $|\gamma|\geq 1$. So we proved the lemma with $C=n+2$.
\end{proof}

In order to prove the converse estimate, Bourgain and Brezis evoked the decomposition theorem of Smirnov.

\begin{theorem} [\cite{smirnov1993decomposition}]
\label{thm_Smirnov_B}
For any compactly supported $\Phi\in L^1_{n-1}(\mathbb{R}^n)$,  with $d\Phi=0$, there exists a sequence of positive numbers $\{\mu^m_j\}$ and closed smooth curves $\{\gamma^m_j\}$ such that for all $m\geq 1$,
\[
\sum\limits_{j=1}^\infty |\mu^m_j| |\gamma^m_j| \leq \|\Phi\|_{L^1_{n-1}}
\]
and for every $u\in C(\mathbb{R}^n)$ and $1\leq i\leq n$
\[
\sum\limits_{j=1}^\infty \mu^m_j \int_{\gamma^m_j} u(s) \tau_i(s) ds \to \int u(x) \phi_i(x) dx, \text{ as } m\to \infty,
\]
where $\phi_i$ are the components of $\Phi$.
\end{theorem}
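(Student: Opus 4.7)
The plan is to realize $\Phi$ as a $1$-dimensional normal cycle in $\mathbb{R}^n$ and decompose it into a positive superposition of cycles supported on closed smooth curves. Hodge-dualize $\Phi = \sum_{j=1}^n \phi_{I_j} dx^{I_j}$ to a divergence-free vector field $F = (\pm \phi_{I_1},\dots,\pm \phi_{I_n})$; the condition $d\Phi = 0$ becomes $\operatorname{div} F = 0$ distributionally, and $T_F(\omega) := \int F \cdot \omega\, dx$ defines a compactly supported $1$-current with $\partial T_F = 0$ and mass $\mathbf{M}(T_F) \leq \|\Phi\|_{L^1_{n-1}}$. The task is to approximate $T_F$, evaluated against continuous test functions, by countable positive combinations of currents carried by smooth closed curves, with total length bounded by $\mathbf{M}(T_F)$.

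Next I would mollify: $F_\epsilon = F * \eta_\epsilon$ is smooth, compactly supported, divergence-free, and converges to $F$ in $L^1$. For smooth $F_\epsilon$ one partitions the support into thin flow tubes along integral curves of $F_\epsilon/|F_\epsilon|$; flux is constant along each tube by $\operatorname{div} F_\epsilon = 0$, and since the flow preserves Lebesgue measure, Poincar\'e recurrence inside a fixed bounded box containing all supports forces almost every orbit to return arbitrarily close to itself. A small perturbation then closes each tube into a smooth loop $\gamma^m_j$, with weight $\mu^m_j$ proportional to the flux carried, so that $\sum_j \mu^m_j |\gamma^m_j| \leq \|F_\epsilon\|_{L^1}$ up to an $o(1)$ error as the tube cross-sections shrink. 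For continuous $u$, the quantity $\sum_j \mu^m_j \int_{\gamma^m_j} u\, \tau_i\, ds$ is then a Riemann-type approximation of $\int u\, \phi_i \, dx$ in the flow-tube parametrization; uniform continuity of $u$ on the common compact support, together with the uniform mass bound, gives the required convergence, after a diagonal argument in $\epsilon$ and in the tube-refinement parameter.

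The main obstacle is the flow-tube closure step in dimensions $n \geq 3$, where no global scalar potential is available. In $n = 2$ one sidesteps this entirely: $F = (\partial_2\psi, -\partial_1\psi)$ for a stream function $\psi$, and the coarea formula applied to the level curves of $\psi$ yields the decomposition and the length bound directly. For $n \geq 3$ one has to work harder, and this is the genuine content of Smirnov's structure theorem: a Choquet-type decomposition of the cone of divergence-free vector charges into ``elementary solenoids'', established via a weak-$*$ compactness/Hahn--Banach argument in the space of normal $1$-currents, combined with an approximation of such elementary solenoids by smooth closed curves whose total length is controlled by the mass of the original current.
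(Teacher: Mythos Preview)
The paper does not prove this theorem: it is quoted verbatim from Smirnov's 1993 paper and used as a black box, so there is no ``paper's own proof'' to compare against. Your task was therefore to reconstruct Smirnov's argument, and your outline is broadly faithful to his strategy---dualize the closed $(n-1)$-form to a compactly supported divergence-free $L^1$ vector field, view it as a normal $1$-cycle, mollify, and then decompose into elementary solenoids approximated by closed curves with controlled total length.

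That said, what you have written is a roadmap rather than a proof. The two-dimensional case via the stream function and coarea is fine, but the heart of the matter is precisely the step you flag as ``the main obstacle'': in $n\geq 3$ the flow-tube closure via Poincar\'e recurrence is only heuristic. Recurrence of Lebesgue-a.e.\ orbits in a bounded region does not by itself produce closed curves with a length budget summing to at most $\mathbf{M}(T_F)$; one needs the full machinery of Smirnov's structure theorem---the Choquet/Hahn--Banach decomposition of the cone of solenoidal charges into elementary solenoids, together with his quantitative approximation of each elementary solenoid by smooth closed curves whose lengths are controlled by the mass. You mention these ingredients in the last paragraph, but you invoke them rather than prove them, which is effectively circular: you are citing Smirnov to prove Smirnov. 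If the intent is simply to indicate why the theorem is true, your sketch is adequate; if an actual proof is required, the $n\geq 3$ decomposition step must be carried out in full.
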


In our case $d \Phi \in L^1_{n-1}(\mathbb{R}^n)$ does not necessarily vanish and we need a more general version of Smirnov's theorem, which we formulate in the following form
\begin{theorem} [\cite{smirnov1993decomposition}]
\label{thm_Smirnov_C}
Let $\Phi\in \Upsilon^1_{n-1}(\mathbb{R}^n)$. Then there exist $P\in \Upsilon^1_{n-1}(\mathbb{R}^n)$ and $Q\in \Upsilon^1_{n-1}(\mathbb{R}^n)$ such that
\begin{itemize}
    \item $\|\Phi\|_{L^1_{n-1}} = \|P\|_{L^1_{n-1}} + \|Q\|_{L^1_{n-1}}$,
    \item $dP=0$ and we can apply the previous theorem to $P$ \item $d Q = d\Phi$.
\end{itemize}
Moreover, there exist $\{\lambda^l_j\}$ and smooth curves  $\tilde{\gamma}^l_j$ (not necessarily closed) such that for all $l\geq 1$
\[
\sum_{j=1}^\infty |\lambda^l_j| |\tilde{\gamma}^l_j| \leq \|Q\|_{L^1_{n-1}},
\]
\[
\sum_{j=1}^\infty |\lambda^l_j| \leq \|dQ\|_{L^1_{n}}
\]
and for $1\leq i\leq n$
\[
\sum\limits_{j=1}^\infty \lambda^l_j \int_{\tilde{\gamma}^l_j} u(s) \tau_i(s) ds \to \int u(x) q_i(x) dx, \text{ as } l\to \infty.
\]
where $q_i$ are the components of $Q$.
\end{theorem}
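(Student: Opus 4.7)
The plan is to reinterpret the statement as a decomposition of normal 1-currents in $\mathbb{R}^n$ and then invoke Smirnov's cycle/acycle decomposition directly; the nontrivial content is entirely contained in \cite{smirnov1993decomposition}, and our task is to package it in the language of $\Upsilon^1_{n-1}$-forms.

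First I would set up a dictionary between $\Upsilon^1_{n-1}$-forms and normal 1-currents. Identify an $(n-1)$-form $\Phi=\sum_{|I|=n-1}\phi_I\,dx^I$ via the Hodge star with a vector field $V_\Phi=(v_1,\ldots,v_n)$, and hence with the 1-current $T_\Phi(\omega)=\int\omega\cdot V_\Phi\,dx$. With the fiber norm on currents dual to $\ell^\infty$ (which matches the $\ell^1$ convention used in $\|\cdot\|_{L^1_{n-1}}$), the mass $\mathbf{M}(T_\Phi)$ equals $\|\Phi\|_{L^1_{n-1}}$. An integration by parts shows that $\partial T_\Phi$ is the distribution corresponding to $\pm d\Phi$ (equivalently, the divergence of $V_\Phi$), so $\Phi\in\Upsilon^1_{n-1}(\mathbb{R}^n)$ is exactly the condition that $T_\Phi$ be a \emph{normal} 1-current, with $\mathbf{M}(\partial T_\Phi)=\|d\Phi\|_{L^1_n}$.

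Next I would invoke Smirnov's cycle/acycle decomposition: every normal 1-current $T$ in $\mathbb{R}^n$ splits as $T=T^{\mathrm{cl}}+T^{\mathrm{ac}}$, where $\partial T^{\mathrm{cl}}=0$, $\partial T^{\mathrm{ac}}=\partial T$, and mass is additive, $\mathbf{M}(T)=\mathbf{M}(T^{\mathrm{cl}})+\mathbf{M}(T^{\mathrm{ac}})$. Translating back via the dictionary produces the required pair $(P,Q)$ with $dP=0$, $dQ=d\Phi$, and $\|\Phi\|_{L^1_{n-1}}=\|P\|_{L^1_{n-1}}+\|Q\|_{L^1_{n-1}}$; Theorem \ref{thm_Smirnov_B} then applies to the closed form $P$. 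For the ``moreover'' part I would use Smirnov's superposition representation of the acyclic piece: $T^{\mathrm{ac}}=\int[\gamma]\,d\mu(\gamma)$, the integral being over a positive finite measure $\mu$ on simple (non-closed) rectifiable curves, with $\int|\gamma|\,d\mu(\gamma)=\mathbf{M}(T^{\mathrm{ac}})=\|Q\|_{L^1_{n-1}}$, and with the total mass of $\mu$ controlled by $\mathbf{M}(\partial T^{\mathrm{ac}})=\|dQ\|_{L^1_n}$ (because endpoints do not cancel in an acyclic current). Weak-$*$ approximation of $\mu$ by finitely supported measures, combined with smoothing of the rectifiable curves, yields for each $l\geq 1$ coefficients $\lambda^l_j$ and smooth curves $\tilde{\gamma}^l_j$ satisfying both $\sum_j|\lambda^l_j|\,|\tilde{\gamma}^l_j|\leq\|Q\|_{L^1_{n-1}}$ and $\sum_j|\lambda^l_j|\leq\|dQ\|_{L^1_n}$; the required limit of line integrals is then weak-$*$ convergence tested against the continuous function $u$.

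The main technical point is ensuring that both bounds in the last part hold \emph{simultaneously} for the same discrete approximation. This is built into Smirnov's construction, because the single measure $\mu$ controls both the arc length of $T^{\mathrm{ac}}$ and the endpoint mass of $\partial T^{\mathrm{ac}}$, so the two sums are nothing more than different moments of a common discretization of $\mu$. Apart from this, the argument is a largely notational translation of \cite{smirnov1993decomposition} into the Hodge-dual language of $(n-1)$-forms used throughout the paper.
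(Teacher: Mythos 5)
The paper does not actually prove this statement: it is quoted from Smirnov's decomposition paper and used as a black box, so there is no in-text argument to compare yours against. That said, your reconstruction is the right route and essentially the only one: identify $\Phi$ via the Hodge star with a vector field, hence with a normal $1$-current $T_\Phi$ whose mass is $\|\Phi\|_{L^1_{n-1}}$ and whose boundary mass is $\|d\Phi\|_{L^1_n}$, apply Smirnov's cycle/acyclic splitting $T=T^{\mathrm{cl}}+T^{\mathrm{ac}}$ with additivity of mass, and then use the complete decomposability of the acyclic piece into a superposition $\int[\gamma]\,d\mu(\gamma)$ of simple oriented curves. Two points deserve to be made explicit. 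First, the simultaneous validity of the two bounds rests precisely on the ``no cancellation'' property of Smirnov's acyclic decomposition, namely that both $\mathbf{M}(T^{\mathrm{ac}})=\int|\gamma|\,d\mu$ and $\mathbf{M}(\partial T^{\mathrm{ac}})=\int\mathbf{M}(\partial[\gamma])\,d\mu$ hold; since $\mathbf{M}(\partial[\gamma])=2$ for a non-closed simple curve, one even gets $\|\mu\|\leq\tfrac12\|dQ\|_{L^1_n}$, so the stated inequality holds with room to spare --- you should state this rather than just assert that ``endpoints do not cancel.'' Second, the passage from the measure $\mu$ on rectifiable curves to finite sums of \emph{smooth} curves satisfying the exact (not $(1+\varepsilon)$-perturbed) inequalities, and the verification that the tested convergence holds for a merely continuous (possibly unbounded) $u$, both need a word; in the paper's actual application $\Phi$ is compactly supported, which localizes everything and makes both steps routine, but the theorem as stated for general $\Phi\in\Upsilon^1_{n-1}(\mathbb{R}^n)$ and general $u\in C(\mathbb{R}^n)$ is slightly stronger than what the discretization argument gives without that localization. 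These are refinements, not gaps in the strategy.
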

Let us introduce an auxiliary norm for $u\in C(\mathbb{R}^n)$:
\[
\|u\|_{**} = \sup\limits_{\partial \gamma=\emptyset}\frac{1}{|\gamma|} \left| \int_\gamma u(s) \tau(s) ds \right| + \sup\limits_{|\gamma|< 1} \left| \int_\gamma u(s) \tau(s) ds \right|
\]
\[
+ \sup\limits_{|\gamma|\geq 1}\frac{1}{|\gamma|} \left| \int_\gamma u(s) \tau(s) ds \right|.
\]

\begin{lem}
For any $u\in d^{n-1}(\mathbb{R}^n)\cap C(\mathbb{R}^n)$,
\[
\|u\|_{d^{n-1}(\mathbb{R}^n)} \leq 2 \|u\|_{**}.
\]
\end{lem}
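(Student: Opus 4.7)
My plan is to run a Bourgain--Brezis-style argument: use the generalized Smirnov decomposition (Theorem~\ref{thm_Smirnov_C}) to write an arbitrary normalized test form as a sum of a closed form plus a form whose exterior derivative carries the ``non-closed'' information, reduce each $I$-component to a limit of weighted line integrals of $u$, and then match each curve to the appropriate one of the three suprema defining $\|u\|_{**}$.

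First I fix $\Phi = \sum_{|J|=n-1} \phi_J\, dx^J \in \mathcal{D}^{n-1}(\mathbb{R}^n)$ with $\|\Phi\|_{\Upsilon^1_{n-1}} \leq 1$ and a multi-index $I$ of length $n-1$, and estimate $|u(\phi_I)|$. Applying Theorem~\ref{thm_Smirnov_C} to $\Phi$ yields $P, Q \in \Upsilon^1_{n-1}(\mathbb{R}^n)$ with $dP=0$, $dQ=d\Phi$, and $\|P\|_{L^1_{n-1}} + \|Q\|_{L^1_{n-1}} = \|\Phi\|_{L^1_{n-1}}$. Since $u$ is continuous and $p_I, q_I \in L^1$, the pairing with $u$ is an honest integral, and I treat $u(p_I)$ and $u(q_I)$ separately.

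For the closed part, Theorem~\ref{thm_Smirnov_B} supplies closed smooth curves $\gamma^m_j$ and weights $\mu^m_j$ with $\sum_j |\mu^m_j||\gamma^m_j| \leq \|P\|_{L^1_{n-1}}$ such that $\int u\,p_I\,dx = \lim_m \sum_j \mu^m_j \int_{\gamma^m_j} u \tau_I \, ds$. Because each $\gamma^m_j$ has empty boundary, the first supremum in $\|u\|_{**}$ yields $|\int_{\gamma^m_j} u\tau_I \, ds| \leq |\gamma^m_j|\|u\|_{**}$, and summing produces $|u(p_I)| \leq \|u\|_{**}\|P\|_{L^1_{n-1}}$. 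For the non-closed part, Theorem~\ref{thm_Smirnov_C} gives curves $\tilde\gamma^l_j$ and weights $\lambda^l_j$ satisfying the two simultaneous constraints $\sum_j |\lambda^l_j||\tilde\gamma^l_j| \leq \|Q\|_{L^1_{n-1}}$ and $\sum_j |\lambda^l_j| \leq \|d\Phi\|_{L^1_n}$. The key step is to split the $j$-sum by length: on $\{|\tilde\gamma^l_j| \geq 1\}$ I use the third (length-normalized) supremum to bound $|\int u\tau_I\,ds| \leq |\tilde\gamma^l_j|\|u\|_{**}$ and absorb into the first constraint; on $\{|\tilde\gamma^l_j| < 1\}$ I use the middle (short-curve) supremum to bound the integral by $\|u\|_{**}$ and absorb the remaining $|\lambda^l_j|$-mass into the second constraint. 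This yields $|u(q_I)| \leq \|u\|_{**}(\|Q\|_{L^1_{n-1}} + \|d\Phi\|_{L^1_n})$.

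Adding the two bounds and using $\|P\|_{L^1_{n-1}} + \|Q\|_{L^1_{n-1}} + \|d\Phi\|_{L^1_n} = \|\Phi\|_{\Upsilon^1_{n-1}} \leq 1$ gives $|u(\phi_I)| \leq \|u\|_{**}$. Taking the supremum over $\Phi$ and the maximum over $I$ then yields $\|u\|_{d^{n-1}} \leq \|u\|_{**}$, which is in fact stronger than the claimed factor of $2$. The only delicate point, and thus the main obstacle, is aligning the two independent summability constraints supplied by Theorem~\ref{thm_Smirnov_C} with the length-based splitting so that each curve in the $Q$-decomposition is controlled by exactly one supremum in $\|u\|_{**}$; the remaining steps are routine bookkeeping of the limiting procedure.
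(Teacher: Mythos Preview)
Your proposal is correct and follows essentially the same route as the paper: apply Theorem~\ref{thm_Smirnov_C} to split $\Phi=P+Q$, use Theorem~\ref{thm_Smirnov_B} on the closed part $P$, and split the curves in the $Q$-decomposition by length to match the three suprema in $\|u\|_{**}$. The only difference is cosmetic: the paper first picks a single $\Phi$ with $\|u\|_{d^{n-1}}\le 2\max_I|u(\phi_I)|$ (approximating the supremum, which is where the factor $2$ enters), whereas you bound $|u(\phi_I)|\le \|u\|_{**}$ for \emph{every} normalized $\Phi$ and then take the supremum, giving the sharper constant $1$.
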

\begin{proof}
By the definition of $d^{n-1}(\mathbb{R}^n)$, there exists \[
\Phi = \sum\limits_{i=1}^n \phi_i dx^1\wedge \dots \widehat{dx^i} \wedge \dots dx^n  \in \mathcal{D}^{n-1}(\mathbb{R}^n)
\]
such that
\[
\|\Phi\|_{L^1_{n-1}} + \|d\Phi\|_{L^1} \leq 1
\]
and
\begin{equation}
\label{temp_fg_1}
\|u\|_{d^{n-1}} \leq 2 \max\limits_{I} |u(\phi_I)|.
\end{equation}

Let us apply Theorem \ref{thm_Smirnov_C} to $\Phi$. Then $\Phi$ can be decomposed into the sum of $P$ and $Q$ such that $d\Phi=dQ$, $\|\Phi\|_{L^1_{n-1}}  = \|P\|_{L^1_{n-1}}+\|Q\|_{L^1_{n-1}}  $ and $Q$ is a weak limit of the linear combination of the curves $\tilde{\gamma}^l_j$ in the sense that
\[
\sum\limits_{j=1}^\infty \tilde{\lambda}^l_j \int_{\tilde{\gamma}^l_j} u(s) \tau_i(s) ds \to \int u(x) q_i(x) dx, \text{ as } l\to \infty,
\]
where
\[
\sum_{j=1}^\infty |\tilde{\lambda}^l_j| (1+|\tilde{\gamma}^l_j|) \leq \|Q\|_{L^1_{n-1}} + \|dQ\|_{L^1} \leq 1, \text{ for all } l\geq 1.
\]
Moreover, applying Theorem \ref{thm_Smirnov_B} to $P$, we get a sequence of closed curves $\gamma^l_j$ and numbers $\lambda^l_j$ such that
\[
\sum\limits_{j=1}^\infty \lambda^l_j \int_{\gamma^l_j} u(s) \tau_i(s) ds \to \int u(x) p_i(x) dx, \text{ as } l\to \infty
\]
and
\[
\sum_{j=1}^\infty |\lambda^l_j| |\gamma^l_j| \leq \|P\|_{L^1_{n-1}}\leq 1 \text{ for all } l\geq 1.
\]

All in all,
\[
\int u(x) \phi_i(x) dx  = \lim\limits_{l\to \infty} \sum\limits_{j=1}^\infty \lambda^l_j \int_{\gamma^l_j} u(s) \tau_i(s) ds + \sum\limits_{j=1}^\infty \tilde{\lambda}^l_j \int_{\tilde{\gamma}^l_j} u(s) \tau_i(s) ds
\]
and
\begin{equation}
\label{temp_fg_2}
\left| \int u(x) \phi_i(x) dx \right| \leq \sup\limits_{l,j} \left| \frac{1}{|\gamma^l_j|} \int_{\gamma^l_j} u(s) \tau_i(s) ds \right|  +
\end{equation}
\[
+ \sup\limits_{l,|\tilde{\gamma}^l_j|< 1} \left| \int_{\tilde{\gamma}^l_j} u(s) \tau_i(s) ds \right| + \sup\limits_{l,|\tilde{\gamma}^l_j|\geq 1} \left| \frac{1}{|\tilde{\gamma}^l_j|} \int_{\tilde{\gamma}^l_j} u(s) \tau_i(s) ds \right| \leq \|u\|_{**}.
\]
The result follows from (\ref{temp_fg_1}) and (\ref{temp_fg_2}).
\end{proof}

\begin{lem}
For any $u\in C(\mathbb{R}^n)$
\[
\|u\|_{*} \leq \|u\|_{**} \leq 4 \|u\|_*.
\]
\end{lem}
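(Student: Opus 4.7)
The inequality $\|u\|_{*}\leq\|u\|_{**}$ is immediate: the definition of $\|u\|_{**}$ differs from that of $\|u\|_{*}$ only by the addition of the non-negative middle term $\sup_{|\gamma|<1}\bigl|\int_{\gamma}u\tau\,ds\bigr|$. The content of the lemma is therefore the reverse estimate, and for this it suffices to prove
\[
\sup_{|\gamma|<1}\left|\int_{\gamma}u\tau\,ds\right|\leq 3\|u\|_{*},
\]
because then $\|u\|_{**}\leq\|u\|_{*}+3\|u\|_{*}=4\|u\|_{*}$.

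The plan is a direct geometric construction. Fix a smooth curve $\gamma$ with endpoints $a,b$ and length $L<1$; then $|a-b|\leq L<1$, so one can find a smooth curve $\beta$ from $b$ back to $a$ of length exactly $1$ (for instance, a two-segment path $b\to p\to a$ with $p$ off the line $\overline{ab}$, chosen so that the total length is $1$, with the corner at $p$ rounded smoothly; any length in $[|a-b|,\infty)$ is realizable by varying $p$). The concatenation $\Gamma:=\gamma\cup\beta$ is then a (piecewise smooth) closed curve of length $L+1$. Applying the closed-curve term of $\|u\|_{*}$ to $\Gamma$ and the length-$\geq 1$ term to $\beta$ gives
\[
\left|\int_{\Gamma}u\tau\,ds\right|\leq(L+1)\|u\|_{*},\qquad\left|\int_{\beta}u\tau\,ds\right|\leq\|u\|_{*},
\]
and since $\int_{\Gamma}=\int_{\gamma}+\int_{\beta}$, the triangle inequality yields
\[
\left|\int_{\gamma}u\tau\,ds\right|\leq(L+2)\|u\|_{*}<3\|u\|_{*},
\]
as required. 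Notice the constant $L+2$ is sharp enough: choosing $|\beta|>1$ would only worsen the bound, while $|\beta|<1$ would forfeit the length-$\geq 1$ estimate on $\beta$.

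The only technical point is that the suprema in $\|u\|_{*}$ are stated over smooth curves, whereas $\Gamma$ has corners at $a$ and $b$. This is handled by the standard rounding-of-corners argument: replacing $\Gamma$ in an $\varepsilon$-neighborhood of each corner with a smooth arc of length $O(\varepsilon)$ produces a smooth closed curve $\Gamma_{\varepsilon}$ with $|\Gamma_{\varepsilon}|\to L+1$, and, using continuity of $u$ (hence local boundedness near the corners), $\int_{\Gamma_{\varepsilon}}u\tau\,ds\to\int_{\Gamma}u\tau\,ds$ as $\varepsilon\to 0$. The closed-curve bound in $\|u\|_{*}$ thus passes to the limit, and the argument above goes through; the same remark applies if one wishes $\beta$ itself to be smooth from the outset. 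I expect this smoothing step to be the only part of the proof requiring any real care, as the rest is just arithmetic on the triangle inequality.
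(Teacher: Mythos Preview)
Your argument is correct and follows essentially the same route as the paper: close up the short curve $\gamma$ by adjoining an auxiliary curve of length between $1$ and $2$ (you take length exactly $1$, the paper takes $1<|\gamma'|<2$), apply the closed-curve supremum to the concatenation and the long-curve supremum to the auxiliary piece, and conclude via the triangle inequality. Your treatment of the corner-smoothing issue is a welcome addition that the paper's proof leaves implicit.
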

\begin{proof}
The first inequality follows from the definitions of the norms. In order to see the second one, we need to show that
\[
\sup\limits_{|\gamma|< 1} \left| \int_\gamma u(s) \tau(s) ds \right| \leq  \sup\limits_{\partial \gamma=\emptyset}\frac{3}{|\gamma|} \left| \int_\gamma u(s) \tau(s) ds \right| + \sup\limits_{|\gamma|\geq 1}\frac{3}{|\gamma|} \left| \int_\gamma u(s) \tau(s) ds \right|.
\]
Let us consider any $\gamma$ with $|\gamma|<1$ and $\partial \gamma  = \{a,b\}$. We can always find $\gamma'$ such that $1<|\gamma'|<2$ and $\gamma'':= \gamma + \gamma'$ is a closed curve.

Then
\[
\left| \int_\gamma u(s) \tau(s) ds \right| \leq \left| \int_{\gamma''} u(s) \tau(s) ds \right| + \left| \int_{\gamma'} u(s) \tau(s) ds \right|
\]
\[
\leq \left| \frac{3}{|\gamma''|}\int_{\gamma''} u(s) \tau(s) ds \right| + \left| \frac{3}{|\gamma'|}\int_{\gamma'} u(s) \tau(s) ds \right|
\]
\end{proof}

\subsection{Examples of $d^k(\mathbb{R}^n)$ functions}
In this section, we want to show that there are more functions in $d^k(\mathbb{R}^n)$ besides those in $W^{1,n}(\mathbb{R}^n)$.

\subsubsection{Triebel-Lizorkin and Besov functions}

We recall that Sobolev space $W^{s,p}(\mathbb{R}^n)$, $1<p<\infty$ is a special case of more general classes of functions
\[
W^{s,p}(\mathbb{R}^n) = F^{s,p}_p(\mathbb{R}^n) = B^{s,p}_p(\mathbb{R}^n),
\]
here $F^{s,p}_q$, $s\in \mathbb{R}$, $0<p,q<\infty$ is the space of Triebel-Lizorkin and $B^{s,p}_q(\mathbb{R}^n)$, $s\in \mathbb{R}$, $0<p,q\leq \infty$, is the Besov space (see e.g. \cite{grafakos2009modern} or \cite{Triebel_Function_spaces_2} for definitions).

It was shown in \cite{van2010limiting} (see Proposition 2.1 there),  that
$\mathring{F}^{s,p}_q \subset D_{n-1}$ for all $sp=n$, $1<p<\infty$, $0<q<\infty$ (here $\mathring{F}^{s,p}_q$ is a homogeneous Triebel-Lizorkin space). Recalling the embedding theorems (see e.g. Ex 6.5.2 in \cite{grafakos2009modern})
\[
\mathring{B}^{s,p}_{\min(p,q)} \subset \mathring{F}^{s,p}_q \subset \mathring{B}^{s,p}_{\max(p,q)},
\]
and
\[
\mathring{B}^{s,p}_q \subset \mathring{B}^{s',p'}_q, \text{ if } sp=s'p' \text{ and } s>s'
\]
one can obtain the embedding $\mathring{B}^{s,p}_q\subset D_{n-1}$ for $0<q<\infty$. The case $q=\infty$ remains open (see Open problem 1 in \cite{Schaftingen2014}).

One can notice that the proof of Proposition 2.1 in \cite{van2010limiting} is exactly the same as the proof of Theorem 1.5 in \cite{van2004simple}. In fact it can be extended to the non-homogeneous setting as
\begin{theorem}
Let $1<p<\infty$, $1<q<\infty$. Then there exists constants $C_1$ and $C_2$ such that
\[
\|u\|_{d^{n-1}} \leq C_1 \|u\|_{F^{n/p,p}_q}
\]
and
\[
\|u\|_{d^{n-1}} \leq C_2 \|u\|_{B^{n/p,p}_q}.
\]
\end{theorem}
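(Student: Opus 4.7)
The plan is to adapt the proof of the homogeneous embedding $\mathring F^{n/p,p}_q \subset D_{n-1}$ from Proposition 2.1 of \cite{van2010limiting} (whose argument mirrors Theorem 1.5 of \cite{van2004simple}) to the non-homogeneous setting, the new issue being the extra $\|d\Phi\|_{L^1_n}$ term in the $\Upsilon^1_{n-1}$ norm. By density of $\mathcal{D}(\mathbb{R}^n)$ in $F^{n/p,p}_q$ and $B^{n/p,p}_q$ for $1<p,q<\infty$, I would fix $u \in \mathcal{D}(\mathbb{R}^n)$ and an arbitrary $\Phi = \sum_I \phi_I\, dx^I \in \mathcal{D}^{n-1}(\mathbb{R}^n)$ with $\|\Phi\|_{L^1_{n-1}} + \|d\Phi\|_{L^1_n}\leq 1$, and reduce matters to showing $|u(\phi_I)|\leq C\|u\|_{F^{n/p,p}_q}$ (resp.\ $\leq C\|u\|_{B^{n/p,p}_q}$) for each component.

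The first step is to apply Smirnov's decomposition (Theorem \ref{thm_Smirnov_C}) to write $\Phi = P + Q$ with $dP = 0$, $dQ = d\Phi$, and $\|P\|_{L^1}+\|Q\|_{L^1}\leq 1$. The closed part $P$ satisfies the hypotheses of the homogeneous Van Schaftingen embedding from \cite{van2010limiting} (or its Besov analogue), so combined with the elementary inequality $\|u\|_{\mathring F^{n/p,p}_q}\leq \|u\|_{F^{n/p,p}_q}$ one obtains
\[
|u(p_I)|\leq C\|u\|_{\mathring F^{n/p,p}_q}\|P\|_{L^1}\leq C\|u\|_{F^{n/p,p}_q}.
\]
For the correction $Q$, I would reproduce the Bessel-potential trick used in the proof of Theorem \ref{main_thm}: realize each component $q_I$ as the image of $d\Phi$ under a zero-order pseudodifferential operator built from $\delta(I-\Delta)^{-1}$. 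By Theorem \ref{thm_goldberg_OPS}, such operators send $L^1$ into $h^1$, giving $\|q_I\|_{h^1}\leq C(\|\Phi\|_{L^1}+\|d\Phi\|_{L^1})\leq C$. The pairing with $u$ is then controlled by $h^1$--$bmo$ duality together with the classical critical embedding $F^{n/p,p}_q\subset bmo$ (and $B^{n/p,p}_q\subset bmo$ for the Besov case), yielding
\[
|u(q_I)|\leq C\|u\|_{bmo}\|q_I\|_{h^1}\leq C\|u\|_{F^{n/p,p}_q}.
\]

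The main obstacle is arranging the splitting $\Phi = P + Q$ so that \emph{simultaneously} $P$ is closed (needed for the homogeneous estimate) and each component of $Q$ sits in $h^1$ with norm controlled by $\|\Phi\|_{L^1}+\|d\Phi\|_{L^1}$. Smirnov's theorem gives the closedness cleanly but only yields $L^1$ control on $Q$; the Bessel--Hodge construction gives $h^1$ control but leaves a zero-order residual (arising from the identity $(I-\Delta)^{-1}\Delta = (I-\Delta)^{-1} - I$ applied to $d\Phi$, which forces us to add a further $\mathrm{OPS}^0$-bounded summand to $P$ in order to keep it closed). Reconciling these two pictures is the technical heart of the argument, after which the Besov estimate is identical modulo invoking the Besov analogue of the homogeneous Van Schaftingen bound in place of the Triebel--Lizorkin one.
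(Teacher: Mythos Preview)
Your approach diverges substantially from what the paper intends and contains a genuine gap.

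The paper does not give a detailed proof; it simply asserts that the argument of Proposition~2.1 in \cite{van2010limiting} (identical to that of Theorem~1.5 in \cite{van2004simple}) carries over to the non-homogeneous setting. That argument is Van Schaftingen's direct slicing proof, and the paper's point is that when one reruns it for a form $\Phi$ with $d\Phi\in L^1$ rather than $d\Phi=0$, the extra terms that appear are controlled by $\|d\Phi\|_{L^1}$ together with the inhomogeneous norm $\|u\|_{F^{n/p,p}_q}$ (respectively $\|u\|_{B^{n/p,p}_q}$). No Smirnov decomposition, no Bessel--Hodge splitting, and no $h^1$--$bmo$ duality enter.

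Your Step~3 contains a concrete error. You write that ``by Theorem~\ref{thm_goldberg_OPS}, such operators send $L^1$ into $h^1$'', but Goldberg's theorem asserts only $\mathrm{OPS}^0: h^1 \to h^1$. Zero-order (and even order $-1$) pseudodifferential operators do \emph{not} map $L^1$ into $h^1$; the Riesz transforms already fail this. Since the hypothesis $\|\Phi\|_{\Upsilon^1_{n-1}} \leq 1$ gives only $\|d\Phi\|_{L^1}\leq 1$ and no $h^1$ control on $d\Phi$, you cannot conclude that the components $q_I$ lie in $h^1$, and the duality bound $|u(q_I)|\leq \|u\|_{bmo}\|q_I\|_{h^1}$ is unavailable. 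This is exactly the ``main obstacle'' you yourself flag at the end, and it is not a matter of presentation but a genuine failure of the method: the Smirnov route yields only $L^1$ control on $Q$, while the Bessel--Hodge route needs $h^1$ input to produce $h^1$ output. The two cannot be reconciled without the false mapping $\mathrm{OPS}^0: L^1\to h^1$, so as written the argument for the $Q$-part does not close.
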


\subsubsection{Locally Lipschitz functions}
The following proposition provides a simple sufficient condition to ensure that $u\in d^{n-1}(\mathbb{R}^n)$.

\begin{prop}
Let $u\in W^{1,1}_{loc} (\mathbb{R}^n\setminus \{0\})$. If $|x|(u(x)+ \nabla u(x))\in L^\infty(\mathbb{R}^n)$, then $u\in d^{n-1}(\mathbb{R}^n)$ and
\[
\|u\|_{d^{n-1}} \leq C \||x|(|u|+ |\nabla u|)\|_{L^\infty}.
\]
\end{prop}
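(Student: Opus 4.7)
The plan is to avoid the line-integral characterization of Theorem~\ref{thm_from_Smirnov} altogether and give a direct integration-by-parts argument. Let $M := \bigl\| |x|(|u|+|\nabla u|) \bigr\|_{L^\infty}$ and fix a test form $\Phi = \sum_{j=1}^n \psi_j\, dx^{I_j} \in \mathcal{D}^{n-1}(\mathbb{R}^n)$. Identifying the components $(\psi_1,\dots,\psi_n)$ with a smooth compactly supported vector field $\psi$, we have $\|\Phi\|_{L^1_{n-1}} = \sum_j \|\psi_j\|_{L^1}$ and $\|d\Phi\|_{L^1_n}$ coincides, up to signs, with $\|\operatorname{div}\psi\|_{L^1}$. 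By the definition of $d^{n-1}$ it suffices to prove $|u(\psi_j)| \le C M \|\Phi\|_{\Upsilon^1_{n-1}}$ for each $j$.

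The key ingredient is the pointwise identity
\[
\psi_j \;=\; \operatorname{div}(x_j\,\psi) \;-\; x_j\,\operatorname{div}\psi,
\]
which follows from $\partial_i(x_j\psi_i) = \delta_{ij}\psi_i + x_j\partial_i\psi_i$ summed in $i$. Substituting and integrating by parts in the first term (for the moment only formally) produces
\[
\int_{\mathbb{R}^n} u\,\psi_j\,dx \;=\; -\int_{\mathbb{R}^n} x_j\,\nabla u\cdot\psi\,dx \;-\; \int_{\mathbb{R}^n} u\,x_j\,\operatorname{div}\psi\,dx.
\]
The hypothesis supplies the uniform pointwise bounds $|x_j\nabla u|\le |x||\nabla u|\le M$ and $|u\,x_j|\le |x||u|\le M$, so bounding each integrand pointwise gives
\[
|u(\psi_j)| \;\le\; M\bigl(\|\Phi\|_{L^1_{n-1}} + \|d\Phi\|_{L^1_n}\bigr) \;=\; M\,\|\Phi\|_{\Upsilon^1_{n-1}},
\]
and taking the supremum over $\Phi$ yields $\|u\|_{d^{n-1}} \le M$, as desired.

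The main obstacle is justifying the integration by parts rigorously, since $u$ lies only in $W^{1,1}_{loc}(\mathbb{R}^n\setminus\{0\})$ and may be singular at the origin. I would apply the identity first on $\Omega_r = \{|x|>r\}$, where $u$ is Lipschitz thanks to $|\nabla u|\le M/|x|$, and then let $r\to 0^+$. The resulting boundary contribution on $\partial B_r$ is controlled by
\[
\Bigl|\int_{\partial B_r} u\,x_j\,\psi\cdot(x/|x|)\,dS\Bigr| \;\le\; M\,\|\psi\|_{L^\infty}\,|\partial B_r| \;=\; O(r^{n-1}),
\]
which vanishes as $r\to 0^+$ provided $n\ge 2$. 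Convergence of the three bulk integrals to their counterparts over $\mathbb{R}^n$ follows from dominated convergence: $|u\psi_j|\le (M/|x|)|\psi_j|$ is integrable on the compact support of $\psi_j$ because $1/|x|\in L^1_{loc}(\mathbb{R}^n)$ for $n\ge 2$, while $|x_j\nabla u\cdot\psi|\le M|\psi|$ and $|u\,x_j\operatorname{div}\psi|\le M|\operatorname{div}\psi|$ are already globally dominated by $L^1$ functions. The dimensional restriction $n\ge 2$, implicit in the statement, is thus precisely what makes the singular set $\{0\}$ removable in the limiting argument.
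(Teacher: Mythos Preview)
Your argument is correct and is essentially the same as the paper's: both rest on the integration-by-parts identity $\int u\,\phi_j = -\int x_j\,\nabla u\cdot\phi - \int x_j\,u\,\operatorname{div}\phi$, after which the hypothesis bounds each term by $M\|\Phi\|_{\Upsilon^1_{n-1}}$. The paper simply records this identity and the resulting estimate without discussing the singularity at the origin; your additional justification (excising $B_r$ and showing the boundary term is $O(r^{n-1})$) is a welcome piece of rigor that the paper omits, but it does not constitute a different approach.
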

\begin{proof}
The proof follows from  integration by parts as in the proof of Proposition 4.3 in \cite{Schaftingen_function_spaces_BMO_Sobolev}.

We need to show that for any $\Phi = \sum\limits_{j=1}^n \phi_j(x) dx^1\wedge \dots \widehat{dx}^j \wedge \dots dx^n \in \mathcal{D}^{n-1}(\mathbb{R}^n)$, we have
\[
\left|\int u(x)\phi_j(x) dx \right| \leq C \||x|(u(x)+ \nabla u(x))\|_{L^\infty} \|\Phi\|_{\Upsilon^1_{n-1}}.
\]

Note that
\[
 \int x_j (\sum_{i} \phi_i \partial_i u) dx  =  - \int \phi_j u dx  - \int x_j u \cdot (\sum_{i} \partial_i \phi_i)  dx.
\]
So
\[
\left| \int u(x)\phi_j(x) dx \right| \leq n \||x| \nabla u\|_{L^\infty} \|\Phi\|_{L^1_{n-1}} + \||x| u\|_{L^\infty} \|d\Phi\|_{L^1_{n}}.
\]
\end{proof}

The proposition allows us to give an example of $u\in d^{n-1}$ which is not covered by the previous classes of functions, the Bessel potential $G_n$.

\begin{remark}
\label{rem_3.5.3}
A typical example of $u\in D^{n-1}\setminus W^{1,n}$ in \cite{Schaftingen_function_spaces_BMO_Sobolev} is the function $u(x) = \log |x|$. However, this function does not belong to $bmo(\mathbb{R}^n)$ and therefore is not in any $d^k$, $1\leq k \leq n$ as
\[
\sup_{|Q|>1} \frac{1}{|Q|}\int_{Q} |\log|y|| dy  = \infty.
\]
\end{remark}

\begin{example}
\label{ex_last}
Let $G_n(x)$ be the Bessel potential of order $n$, i.e. the function whose Fourier transforms is given by $\hat{G}_n(\xi) = (1+|\xi|^2)^{-n/2}$.

The fact that $G_n$ satisfies the conditions of the last proposition follows from the fact that $G_n$ is a continuously differentiable function on $\mathbb{R}^n\setminus \{0\}$ and the asymptotic formulas  for the Bessel potentials (see e.g. \cite{aronszajn1961theory}, pp. 415-417):
\[
G_n(x) \sim C_1 \log|x|, \text{ as } x\to 0,
\]
\[
G_n(x) \sim C_2 |x|^{-1/2} e^{-|x|}, \text{ as } x\to \infty.
\]
Moreover,
\[
\frac{\partial}{\partial x_i} G_n(x) = C'_s \cdot \frac{x_i}{|x|}  K_{1}(|x|),
\]
where $K_1$ is the Bessel-Macdonald function of order $1$, with the asymptotics
\[
K_1(r) \sim C_3 r^{-1} , \text{ as } r\to 0+
\]
\[
K_1(r) \sim C_4 r^{-1/2} e^{-r} , \text{ as } r\to \infty.
\]
\end{example}

\subsection{Application to PDE}

We will illustrate how non-homogeneous $d^k$ spaces can be used in the analysis of classic PDE.

The following result was shown in \cite{bourgain2007new} (see Theorems 2 and 3 there): if $\Delta U = F$ in $\mathbb{R}^n$ and $div F=0$, then
\[
\|U\|_\infty + \|\nabla U\|_2 \leq C \|F\|_1, \text{ if } n=2
\]
and
\[
\|U\|_{n/(n-2)} + \|\nabla U\|_{n/(n-1)} \leq C \|F\|_1, \text{ if } n\geq 3.
\]

A more general result of Bourgain and Brezis  (see Theorem $4'$ in \cite{bourgain2007new} and Remark 2.1 in \cite{Brezis_Schaftingen}) implies that one can relax the condition $div F=0$ to $div F \in L^1$ to obtain
\[
\|\nabla U\|_{n/(n-1)} \leq C (\|F\|_1 + \|div F\|_1).
\]
Note that for $n\geq 3$ this can be combined with a Sobolev embedding theorem to produce
\[
\|U\|_{n/(n-2)} + \|\nabla U\|_{n/(n-1)} \leq C (\|F\|_1 + \|div F\|_1).
\]
However (as noted in \cite{Brezis_Schaftingen}), if $n=2$ then  $U$ may no longer be an $L^\infty$ vector field.
Let us explain why it may happen using Theorem \ref{main_thm}.

Let $g(x)=\log|x|$. Then $g*F$ is continuous for any $F\in \Upsilon^1_{1}$ and if
\[
\|U\|_\infty = (2\pi)^{-1} \|g*F\|_\infty \leq C (\|F\|_1 + \|div F\|_1)
\]
were true for any $F\in \mathcal{D}^1(\mathbb{R}^2)$, then we would have
\[
|g*F(0)| = |\int g(x) F(x) dx | \leq C \|F\|_{\Upsilon^{1}_{1}},
\]
and $g(x)=\log|x|$ would be an $d^{1}$ function and by Theorem \ref{main_thm}, $\log|x|\in bmo(\mathbb{R}^2)$. However, this is false by Remark \ref{rem_3.5.3}.

So the solution of equation $\Delta U = F\in \mathbb{R}^2$ can be essentially unbounded even if $div F\in L^1$, because the fundamental solution of $\Delta$ in $\mathbb{R}^2$ is not an element of $d^{1}(\mathbb{R}^2)$.
Based on the examples of $d^{n-1}(\mathbb{R}^n)$ functions, one can guess that the situation should be better in the case of the Helmholtz equation.
Indeed, the following proposition shows that solutions to the Helmholtz equation can be fully controlled even under relaxed conditions.
\begin{theorem}
\label{prop_application}
 Let $n\geq 2$, $F\in L^1(\mathbb{R}^n;\mathbb{R}^n)$ and $div F\in L^1(\mathbb{R}^n)$. Then the system $(I-\Delta) U = F$ admits a unique solution $U$ such that
\begin{itemize}
    \item If $n=2$, then
    \[
    \|U\|_{\infty} + \|\nabla U\|_{2} \leq C (\|F\|_1 + \|div F\|_1)
    \]
    \item If $n\geq 3$, then
    \[
    \|U\|_{n/(n-2)} + \|\nabla U\|_{n/(n-1)} \leq C (\|F\|_1 + \|div F\|_1)
    \]
\end{itemize}
\end{theorem}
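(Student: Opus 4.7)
The plan is to construct the solution componentwise as $U = \mathcal{J} F$, where $\mathcal{J}$ is the order-$2$ Bessel-potential convolution already used in the proof of Theorem \ref{main_thm}. Uniqueness is automatic in $\mathcal{S}'(\mathbb{R}^n)$: any $V$ with $(I - \Delta)V = 0$ satisfies $(1 + |\xi|^2)\widehat{V} = 0$ and hence $V = 0$. Existence together with $U \in L^1(\mathbb{R}^n;\mathbb{R}^n)$ is immediate from $G_2, F \in L^1$ and Young's inequality. The central device is to encode $F$ and $\operatorname{div} F$ in a single $(n-1)$-form
\[
\Phi_F(y) = \sum_{j=1}^{n} (-1)^{j-1} F_j(y)\, dy^1 \wedge \cdots \widehat{dy^j} \cdots \wedge dy^n,
\]
for which a direct computation yields $d\Phi_F = (\operatorname{div} F)\, dy^1 \wedge \cdots \wedge dy^n$ and hence $\|\Phi_F\|_{\Upsilon^1_{n-1}} = \|F\|_1 + \|\operatorname{div} F\|_1$. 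Because the components of $\Phi_F$ are $\pm F_1, \ldots, \pm F_n$, pairing any $u \in d^{n-1}(\mathbb{R}^n)$ with $\Phi_F$---legitimate by Remark \ref{rem_d_space_upsilon}, as $\Phi_F$ need not be compactly supported to lie in $\Upsilon^1_{n-1}$---returns, up to signs, the quantities $\int u\, F_k$.

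For the gradient estimate, valid for all $n \geq 2$, I would dualize against $L^n$. For $g \in \mathcal{D}(\mathbb{R}^n)$ with $\|g\|_n \leq 1$, Fubini and the evenness of $G_2$ yield
\[
\int g(x)\, \partial_i U_k(x)\, dx = -\int F_k(y)\, \partial_i w(y)\, dy, \qquad w := \mathcal{J} g.
\]
Standard Bessel-potential theory places $w$ in $W^{2,n}(\mathbb{R}^n)$ with $\|w\|_{W^{2,n}} \leq C\|g\|_n$, hence $\partial_i w \in W^{1,n}(\mathbb{R}^n)$ with the same bound, and Theorem \ref{thm_VS_my_version} then lands $\partial_i w$ in $d^{n-1}(\mathbb{R}^n)$ with $\|\partial_i w\|_{d^{n-1}} \leq C\|g\|_n$. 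Applying $\partial_i w$ as a $d^{n-1}$-functional to the $k$-th component of $\Phi_F$ gives
\[
\left|\int g\, \partial_i U_k\, dx\right| \leq \|\partial_i w\|_{d^{n-1}}\|\Phi_F\|_{\Upsilon^1_{n-1}} \leq C\|g\|_n\bigl(\|F\|_1 + \|\operatorname{div} F\|_1\bigr),
\]
and taking the supremum over $g$ (followed by density) produces $\|\nabla U\|_{n/(n-1)} \leq C(\|F\|_1 + \|\operatorname{div} F\|_1)$.

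For the remaining bound the argument splits by dimension. When $n = 2$ the Helmholtz fundamental solution $G_2$ coincides with the order-$n$ Bessel potential $G_n$ of Example \ref{ex_last}, so $G_2 \in d^1(\mathbb{R}^2)$; translation invariance of the $d^1$-norm yields $G_2(x - \cdot) \in d^1(\mathbb{R}^2)$ with the same norm for every $x \in \mathbb{R}^2$, and applying this functional to the components of $\Phi_F$ recovers $\pm U_k(x)$, yielding $\|U\|_\infty \leq \|G_2\|_{d^1}(\|F\|_1 + \|\operatorname{div} F\|_1)$. When $n \geq 3$, the Gagliardo--Nirenberg--Sobolev embedding $W^{1,n/(n-1)}(\mathbb{R}^n) \hookrightarrow L^{n/(n-2)}(\mathbb{R}^n)$ combined with $U \in L^1$ and the gradient estimate upgrades to $\|U\|_{n/(n-2)} \leq C(\|F\|_1 + \|\operatorname{div} F\|_1)$.

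The main obstacle is the construction of $\Phi_F$ whose $\Upsilon^1_{n-1}$-norm sees both $\|F\|_1$ and $\|\operatorname{div} F\|_1$, together with the justification, via Remark \ref{rem_d_space_upsilon}, that $d^{n-1}$-functionals extend continuously to this non-compactly supported form. The dichotomy in the conclusion is intrinsic: the $n = 2$ proof cannot be ported to $n \geq 3$ because near the origin $G_2(x) \sim |x|^{2-n}$ fails the condition $|x|(|u| + |\nabla u|) \in L^\infty$ that the locally Lipschitz proposition requires, so $G_2 \notin d^{n-1}(\mathbb{R}^n)$ for $n \geq 3$.
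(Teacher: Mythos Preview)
Your proof is correct and, for the gradient estimate, takes a genuinely different route from the paper's. The paper obtains $\|\nabla U\|_{n/(n-1)}$ in two separate ways: for $n\ge 3$ it writes $\Delta U = \frac{\Delta}{I-\Delta}F =: \tilde F$, observes that $\frac{\Delta}{I-\Delta}$ is convolution against a finite measure (so $\tilde F$ and $\operatorname{div}\tilde F$ stay in $L^1$), and then invokes the external Bourgain--Brezis estimate for $\Delta U = \tilde F$; for $n=2$ it uses the energy identity $\int|\nabla U_i|^2 = \int U_iF_i - \int U_i^2 \le \|U\|_\infty\|F\|_1$, which only becomes available \emph{after} the $L^\infty$ bound. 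Your duality argument---pairing against $g\in\mathcal D$ with $\|g\|_n\le 1$, moving the derivatives onto $w=\mathcal J g$ so that $\partial_i w\in W^{1,n}\subset d^{n-1}$ via Theorem~\ref{thm_VS_my_version}, and then testing $\partial_i w$ against the form $\Phi_F$---handles all $n\ge 2$ uniformly and stays entirely inside the paper's own $d^k$ framework, never calling on the Bourgain--Brezis theorem. This is arguably a cleaner demonstration of what the $d^{n-1}$ machinery buys. The $\|U\|_\infty$ bound for $n=2$ (via $G_2\in d^1(\mathbb R^2)$ and translation invariance) and the concluding Sobolev step for $n\ge 3$ are handled the same way in both proofs.
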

\begin{proof}
Without loss of generality we can assume that $F\in \mathcal{S}(\mathbb{R}^2;\mathbb{R}^2)$.

\textit{Case 1:} If $n\geq 3$, then
\[
\Delta U = \frac{\Delta}{I-\Delta} F =: \tilde{F}.
\]
As $\frac{\Delta}{I-\Delta}$ is an operator of convolution against a finite measure (see e.g. Chapter 5 in \cite{stein2016singular}), $\tilde{F}\in L^1$ and $div \tilde{F} = \frac{\Delta}{I-\Delta} div F \in L^1$, with
\[
\|\tilde{F}\|_1 + \|div \tilde{F}\|_1 \leq C( \|F\|_1 + \|div F\|_1).
\]
Hence by Theorem $4'$ in \cite{bourgain2007new},
\[
\|\nabla U\|_{n/(n-1)} \leq C (\|\tilde{F}\|_1 + \|div \tilde{F}\|_1) \leq C'( \|F\|_1 + \|div F\|_1)
\]
and the application of Sobolev's embedding theorem completes the proof.

\textit{Case 2:} If $n=2$, then solution $U$ has the form $U(x) = G_2*F(x)$, where $G_2(x)$ is the Bessel potential of order $2$. By  Example \ref{ex_last}, $G_2\in d^{1}(\mathbb{R}^2)$.
Thus for any $x\in \mathbb{R}^2$,
\[
|U(x)| = |G_2*F(x)|  \leq \|G_2\|_{d^1} \|\tau_x F\|_{\Upsilon^1_1(\mathbb{R}^2)} = \|G_2\|_{d^1} \| F\|_{\Upsilon^1_1(\mathbb{R}^2)},
\]
where $\tau_x$ is the translation operator defined by $(\tau_x f)(y) = f(y-x)$.
In other words
\begin{equation}
    \label{appl_temp1}
    \|U\|_\infty \leq C (\|F\|_1 + \|div F\|_1 ).
\end{equation}
In order to control $\nabla U$ notice that the decay of $F$ and $G_2$ implies
\[
\int |\nabla U_i(x)|^2 dx =  - \int U_i(x) \Delta U_i(x) dx  =
\]
\[
=  \int U_i(x) F_i(x) dx   - \int U^2_i(x) dx.
\]
Hence, recalling that $U$ is a convolution of the $L^1$ functions $G_2$ and $F$,
\[
\|\nabla U\|_2 \leq C \|U\|^{1/2}_\infty ( \|F\|_1 + \|U\|_1)^{1/2} \leq C \|U\|^{1/2}_\infty \|F\|^{1/2}_1.
\]
 Using (\ref{appl_temp1}) we complete the proof.
\end{proof}

\section{$d^k$ spaces on Lipschitz domains}
In this section we define $d^k$ classes on domains. Everywhere in this section we assume $\Omega$ to be a bounded Lipschitz domain in $\mathbb{R}^n$.

\begin{definition}
Let $1\leq k\leq n$. A distribution $u\in \mathcal{D}'(\Omega)$ is said to belong to $d^k(\Omega)$ if there exists $C>0$ such that $|u(\phi_I)| \leq C \|\Phi\|_{\Upsilon^1_k(\Omega)}$ for any
\[
\Phi= \sum\limits_{|I|=k} \phi_I dx^I \in \mathcal{D}^k(\Omega).
\]
\end{definition}
We denote the space of such distributions by $d^k(\Omega)$ and equip it with the norm
\[
\|u\|_{d^k(\Omega)} := \sup\{ |u(\phi_I)| : \Phi\in \mathcal{D}^k(\Omega); \|\Phi\|_{\Upsilon^1_k(\Omega)}\leq 1 \}.
\]
\begin{remark}
Let $1\leq k\leq n$. We want to consider distributions $u\in \mathcal{E}'(\Omega)$ such that $|u(\phi_I)| \leq C \|\Phi\|_{\Upsilon^1_k(\Omega)}$ for some finite $C>0$ and any
\[
\Phi= \sum\limits_{|I|=k} \phi_I dx^I \in \mathcal{D}^k(\mathbb{R}^n).
\]
The class of $\mathcal{E}'(\Omega)\cap d^k(\mathbb{R}^n)$, equipped with the norm $\|\cdot\|_{d^k(\mathbb{R}^n)}$ forms an incomplete normed space. Therefore we define  $d^k_z(\Omega)$ as follows.
\end{remark}

\begin{remark}
\label{rem_d_local_def}
The definitions we use were suggested by Van Schaftingen in \cite{Schaftingen_function_spaces_BMO_Sobolev}. It is also possible to define $d^k(\Omega)$  as we did in Remark \ref{rem_d_space_upsilon}. Any $u\in \mathcal{D}'(\Omega)$ defines a linear map $\tilde{u}:\mathcal{D}^k(\Omega) \to \mathbb{R}^{\binom{n}{k}}$ by
\[
\tilde{u}\left(\sum\limits_{|I|=k} \phi_I dx^I \right) = (u(\phi_I))_{I}
\]
and $u\in d^k(\Omega)$ if and only if $\tilde{u}$ can be extended to a bounded linear map from $\Upsilon^1_{k,0}(\Omega)$ to $(\mathbb{R}^{\binom{n}{k}},\|\cdot\|_{max})$, where
$\Upsilon^1_{k,0}(\Omega) = \overline{\mathcal{D}^k(\Omega)}$ and the closure is taken with respect to the $\Upsilon^1_k$ norm.
\end{remark}

\subsection{$d^k_z(\Omega)$ spaces}

All properties of $d^k_z(\Omega)$ spaces can be deduced from the previous results and the following definition
\begin{definition}
\label{prop_d_z}
Let $1\leq k\leq n$. Then
\[
d^k_z(\Omega) = \{u \in d^k(\mathbb{R}^n): supp \  u \in \overline{\Omega}\}.
\]
\end{definition}
\begin{remark}
It is clear that $d^k_z(\Omega)$ is a closed subspace of $d^k(\mathbb{R}^n)$, hence complete, and  $\mathcal{E}'(\Omega)\cap d^k(\mathbb{R}^n)\subset d^k_z(\Omega) $.
Conversely, any $u\in d^k_z$ is the weak limit of $\mathcal{E}'(\Omega)\cap d^k(\mathbb{R}^n)$. Indeed, consider any $u\in d^k(\mathbb{R}^n)$ supported in $\bar{\Omega}$.
By Theorem \ref{main_thm} and the definition of $bmo_z(\bar{\Omega})$, $u\in bmo_z(\Omega)$. In particular $u\in L^1(\Omega)$. Let $\eta_j$ be a sequence of $\mathcal{D}(\Omega)$ functions such that $\lim\limits_{j\to \infty} \eta_j  = \chi_{\Omega}$, the characteristic function of $\Omega$. Then by Lebesgue's dominated convergence theorem, for any $\Phi\in \mathcal{D}^k(\bar{\Omega})$ and $I$,
\[
\int_{\Omega} u(x) \phi_I(x) dx = \lim_{j\to \infty} \int_{\Omega} (\eta_j u)(x) \phi_I(x) dx.
\]
This shows that $u = \lim\limits_{j\to \infty} (\eta_j u)$ is a weak limit.
\end{remark}
Combining this definition with Lemma \ref{lem_emb} we obtain
\begin{prop}
The spaces $d^k_z(\Omega)$ form a monotone family, i.e.\ the following embeddings hold
\[
d^n_z(\Omega) \subset d^{n-1}_z(\Omega) \subset \dots \subset d^1_z(\Omega).
\]
\end{prop}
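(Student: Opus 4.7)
The statement should follow almost immediately from the ambient embedding on $\mathbb{R}^n$ (Lemma \ref{lem_emb}) together with the definition $d^k_z(\Omega)=\{u\in d^k(\mathbb{R}^n):\mathrm{supp}\,u\subset\overline{\Omega}\}$ (Definition/Proposition \ref{prop_d_z}), so the plan is essentially to unfold definitions. By induction it suffices to handle one step, i.e.\ to prove $d^l_z(\Omega)\subset d^{l-1}_z(\Omega)$ continuously for each $2\leq l\leq n$.

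I would argue as follows. Pick $u\in d^l_z(\Omega)$. By the definition of $d^l_z(\Omega)$, $u$ is an element of $d^l(\mathbb{R}^n)$ whose support lies in $\overline{\Omega}$. Lemma \ref{lem_emb}, applied on the whole space, gives $u\in d^{l-1}(\mathbb{R}^n)$ with the norm bound
\[
\|u\|_{d^{l-1}(\mathbb{R}^n)}\leq \|u\|_{d^l(\mathbb{R}^n)}.
\]
The support of $u$ is unchanged by this inclusion, so it still lies in $\overline{\Omega}$; hence $u\in d^{l-1}_z(\Omega)$. Since by Definition \ref{prop_d_z} the norms on $d^k_z(\Omega)$ are just the restrictions of $\|\cdot\|_{d^k(\mathbb{R}^n)}$, the above inequality reads $\|u\|_{d^{l-1}_z(\Omega)}\leq \|u\|_{d^l_z(\Omega)}$, yielding continuity of the inclusion. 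Iterating from $l=n$ down to $l=2$ produces the asserted chain.

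There is no substantive obstacle: the content is entirely in Lemma \ref{lem_emb}, and the only thing to verify for the localized version is the stability of the support condition under the ambient embedding, which is automatic because the inclusion $d^l(\mathbb{R}^n)\hookrightarrow d^{l-1}(\mathbb{R}^n)$ is the set-theoretic identity.
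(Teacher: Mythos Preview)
Your proposal is correct and matches the paper's own approach exactly: the paper simply states that the proposition follows by combining Definition~\ref{prop_d_z} with Lemma~\ref{lem_emb}, which is precisely the argument you have spelled out in detail.
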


\begin{prop}
Let $\Omega$ be a bounded Lipschitz domain and $W^{1,n}_0(\Omega)$ be the closure of $\mathcal{D}(\Omega)$ functions in the norm $\|\cdot\|_{W^{1,n}(\Omega)}$. Then $W^{1,n}_0(\Omega)$ is continuously embedded into $d^{n-1}_z(\Omega)$.
\end{prop}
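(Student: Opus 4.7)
The plan is to reduce to the already established embedding $W^{1,n}(\mathbb{R}^n)\subset d^{n-1}(\mathbb{R}^n)$ of Theorem \ref{thm_VS_my_version} via extension by zero. Concretely, I would define a map $E: W^{1,n}_0(\Omega)\to W^{1,n}(\mathbb{R}^n)$ by declaring $E(u)=\tilde{u}$, where $\tilde{u}$ is the zero extension of $u$ outside $\Omega$, and then verify that $\tilde{u}$ is automatically supported in $\overline{\Omega}$ and lies in $d^{n-1}(\mathbb{R}^n)$, so $\tilde{u}\in d^{n-1}_z(\Omega)$.

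The first step is to check that $E$ is a well-defined bounded linear map. For a test function $\phi\in \mathcal{D}(\Omega)$ the zero extension $\tilde{\phi}$ belongs to $\mathcal{D}(\mathbb{R}^n)$ and obviously satisfies $\|\tilde{\phi}\|_{W^{1,n}(\mathbb{R}^n)}=\|\phi\|_{W^{1,n}(\Omega)}$. Since $\mathcal{D}(\Omega)$ is dense in $W^{1,n}_0(\Omega)$ by definition, this isometry extends uniquely to a linear isometry $E:W^{1,n}_0(\Omega)\to W^{1,n}(\mathbb{R}^n)$; the resulting $\tilde{u}$ is the limit in $W^{1,n}(\mathbb{R}^n)$ of the zero extensions of an approximating sequence $\phi_j\in \mathcal{D}(\Omega)$, so in particular $\mathrm{supp}\,\tilde{u}\subset \overline{\Omega}$.

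Next, applying Theorem \ref{thm_VS_my_version} to $\tilde{u}$ yields
\[
\|\tilde{u}\|_{d^{n-1}(\mathbb{R}^n)}\leq C\|\tilde{u}\|_{W^{1,n}(\mathbb{R}^n)}=C\|u\|_{W^{1,n}(\Omega)}.
\]
Combined with the support condition $\mathrm{supp}\,\tilde{u}\subset\overline{\Omega}$ and Definition \ref{prop_d_z}, this places $\tilde{u}$ in $d^{n-1}_z(\Omega)$ with the norm controlled by $\|u\|_{W^{1,n}(\Omega)}$, which is exactly the claimed continuous embedding.

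There is no serious obstacle; the only point one must be careful about is the identification in Definition \ref{prop_d_z} that $d^{n-1}_z(\Omega)$ consists of elements of $d^{n-1}(\mathbb{R}^n)$, so the embedding must be stated in the sense that $u\in W^{1,n}_0(\Omega)$ is sent to its zero extension. If one wishes to view this intrinsically (i.e.\ as a distribution on $\Omega$ pairing against $\Phi\in \mathcal{D}^{n-1}(\Omega)$), one can equivalently test $\tilde{u}$ against the zero-extended form $\tilde{\Phi}\in \mathcal{D}^{n-1}(\mathbb{R}^n)$, which satisfies $\|\tilde{\Phi}\|_{\Upsilon^1_{n-1}(\mathbb{R}^n)}=\|\Phi\|_{\Upsilon^1_{n-1}(\Omega)}$, so the same estimate transfers without modification.
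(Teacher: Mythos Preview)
Your proof is correct and follows essentially the same approach as the paper: extend $u\in W^{1,n}_0(\Omega)$ by zero to obtain $\tilde u\in W^{1,n}(\mathbb{R}^n)$ supported in $\overline{\Omega}$, apply Theorem \ref{thm_VS_my_version}, and invoke Definition \ref{prop_d_z}. The only cosmetic difference is that the paper cites the characterization of $W^{1,n}_0(\Omega)$ via zero extension (Theorem 5.29 in \cite{adams2003sobolev}) whereas you derive the same fact directly from the density of $\mathcal{D}(\Omega)$.
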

\begin{proof}
The space $W^{1,n}_0(\Omega)$ can be characterized (see e.g. Theorem 5.29 in \cite{adams2003sobolev}) as follows: let $f\in W^{1,n}(\Omega)$, then $f\in W^{1,n}_0(\Omega)$ if and only if the extension of $f$ by zero to $\mathbb{R}^n\setminus \bar{\Omega}$ belongs to $W^{1,n}(\mathbb{R}^n)$. Using this characterization, we can identify any $u\in W^{1,n}_0(\Omega)$ with $\tilde{u}\in W^{1,n}(\mathbb{R}^n)$ supported in $\bar{\Omega}$. By Van Schaftingen's theorem such $\tilde{u}$ is an element of $ d^{n-1}(\mathbb{R}^n)$ and is supported in $\bar{\Omega}$. Therefore by Defintion \ref{prop_d_z},  $\tilde{u}\in d^{n-1}_z(\Omega)$.
\end{proof}

\begin{prop}
The space $d^1_z(\Omega)$ is a proper subspace of $bmo_z(\Omega)$.
\end{prop}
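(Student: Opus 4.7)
The proof splits into the continuous inclusion and the strictness. For the inclusion, given $u \in d^1_z(\Omega)$, Definition \ref{prop_d_z} identifies $u$ as an element of $d^1(\mathbb{R}^n)$ supported in $\overline{\Omega}$, so Theorem \ref{main_thm} (with $k=1$) yields $u \in bmo(\mathbb{R}^n)$ with $\|u\|_{bmo(\mathbb{R}^n)} \leq C\|u\|_{d^1(\mathbb{R}^n)}$; the vanishing of $u$ outside $\overline\Omega$ then places $u$ in $bmo_z(\Omega)$ with $\|u\|_{bmo_z(\Omega)} \leq C\|u\|_{d^1_z(\Omega)}$.

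For the strictness, my plan is to invoke the uniform boundedness principle. Suppose one can construct a sequence of forms $\Phi^{(m)} = \phi^{(m)}\,dx^1 \in \mathcal{D}^1(\Omega)$ with $\sup_m \|\Phi^{(m)}\|_{\Upsilon^1_1} < \infty$ but $\|\phi^{(m)}\|_{h^1_r(\Omega)} \to \infty$. By Theorem \ref{thm_bmoz_as_dual} the evaluation functionals $T_m(v) := v(\phi^{(m)})$ on $bmo_z(\Omega)$ are bounded with $\|T_m\|_{(bmo_z(\Omega))^*} = \|\phi^{(m)}\|_{h^1_r(\Omega)} \to \infty$, so Banach--Steinhaus produces some $u \in bmo_z(\Omega)$ with $|u(\phi^{(m)})| \to \infty$ along a subsequence. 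But any $u \in d^1_z(\Omega)$ satisfies $|u(\phi^{(m)})| \leq \|u\|_{d^1_z(\Omega)}\|\Phi^{(m)}\|_{\Upsilon^1_1} = O(1)$, so this $u$ lies in $bmo_z(\Omega) \setminus d^1_z(\Omega)$, giving the desired strictness.

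For the sequence I would use a tensor-product ansatz. Fix $x_0 \in \Omega$ with $\mathrm{dist}(x_0,\partial\Omega) \geq 3$ (rescaling $\Omega$ beforehand if necessary) and $r \in (0,1)$ with $B(x_0,2r) \subset \Omega$; split $x = (x_1,x') \in \mathbb{R} \times \mathbb{R}^{n-1}$ and set $\phi^{(m)}(x) = \psi_m(x_1-x_0^1)\chi(x'-x_0')$ with $\chi \in \mathcal{D}(B(0,r)) \subset \mathcal{D}(\mathbb{R}^{n-1})$ a fixed nonnegative cutoff positive near the origin and $\{\psi_m\} \subset \mathcal{D}((-r,r))$ to be chosen. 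A direct computation gives
\[
\|\Phi^{(m)}\|_{\Upsilon^1_1} = \|\psi_m\|_{L^1(\mathbb{R})}\bigl(\|\chi\|_{L^1(\mathbb{R}^{n-1})} + \|\nabla_{x'}\chi\|_{L^1(\mathbb{R}^{n-1})}\bigr),
\]
so uniform $\Upsilon^1_1$-boundedness reduces to $\|\psi_m\|_{L^1(\mathbb{R})} = O(1)$; the distance condition on $x_0$ ensures that $\|\phi^{(m)}\|_{h^1_r(\Omega)}$ is comparable to the $h^1(\mathbb{R}^n)$-norm of the zero extension, so it suffices to arrange $\|\psi_m \otimes \chi\|_{h^1(\mathbb{R}^n)} \to \infty$.

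The hardest step, and the main obstacle, is producing $\{\psi_m\}$ with $\|\psi_m\|_{L^1(\mathbb{R})}$ bounded yet $\|\psi_m \otimes \chi\|_{h^1(\mathbb{R}^n)}$ divergent. The one-variable divergence $\|\psi_m\|_{h^1(\mathbb{R})} \to \infty$ under $\|\psi_m\|_{L^1} = O(1)$ is the standard expression of the strict inclusion $h^1(\mathbb{R}) \subsetneq L^1(\mathbb{R})$ together with density of $\mathcal{D}(\mathbb{R})$ in both norms (Lemma \ref{lem_h1_density}); concrete examples are smoothed truncations of a mean-zero $L^1(\mathbb{R})$ function whose Hilbert transform escapes $L^1$. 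The passage to divergence of the tensor product in $h^1(\mathbb{R}^n)$ should follow from a slicing lower bound $m_\phi(\psi_m \otimes \chi)(x_1,x') \gtrsim \chi(x')\, m_{\phi_1}\psi_m(x_1)$ obtained with a rank-one test function $\phi = \phi_1 \otimes \phi'$ and the smoothness of $\chi$. Once these analytic inputs are secured, the Banach--Steinhaus step above closes the argument.
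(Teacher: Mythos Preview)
Your first paragraph is exactly the paper's proof: the paper's argument is the single sentence that the inclusion follows from Theorem~\ref{main_thm}, Definition~\ref{prop_d_z}, and the definition of $bmo_z(\Omega)$.

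The paper's proof stops there; the word ``proper'' is asserted but not justified in the text. Your Banach--Steinhaus plan for strictness therefore goes well beyond what the paper supplies, and the outline is sound. A few points to tighten if you carry it out: you need only the one-sided bound $\|\phi^{(m)}\|_{h^1_r(\Omega)}\to\infty$, not two-sided comparability with the $h^1(\mathbb{R}^n)$ norm of the zero extension, and this is exactly what the distance condition delivers (with a test function supported in the unit ball, the local maximal function of \emph{any} $h^1(\mathbb{R}^n)$ extension coincides with that of the zero extension on a fixed neighbourhood of $x_0$); rescale the construction rather than $\Omega$; and for $\psi_m$ a more direct choice than the $H^1$/Hilbert-transform route is a smoothed approximate identity $\psi_m\approx m\,\chi_{[0,1/m]}$, which has $\|\psi_m\|_{L^1}=1$ while $m_{\phi_1}\psi_m(x_1)\gtrsim|x_1|^{-1}$ on $(1/m,c)$, so that your tensor-product slicing bound with a nonnegative product test function gives $\|\psi_m\otimes\chi\|_{h^1(\mathbb{R}^n)}\gtrsim\log m$ immediately.
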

\begin{proof}
It follows immediately from Theorem \ref{main_thm}, Defintion \ref{prop_d_z} and the definition of $bmo_z(\Omega)$.
\end{proof}
All in all, we can see that the spaces $d^k_z(\Omega)$ form a family of intermediate spaces between $W^{1,n}_0(\Omega)$ and $bmo_z(\Omega)$. 

\subsection{$d^k(\Omega)$ spaces}
It follows directly from the definitions of $d^k(\mathbb{R}^n)$ and $d^k(\Omega)$, that $u\to u|_{\Omega}$ maps $d^k(\mathbb{R}^n)$ to $d^k(\Omega)$ and
\begin{equation}
\label{d_r_and_d}
\|u|_{\Omega}\|_{d^k(\Omega)} \leq \|u\|_{d^k(\mathbb{R}^n)},
\end{equation}
where $u|_{\Omega}$ stands for the restriction of $u$ to $\Omega$.

Repeating verbatim the proof of Proposition \ref{lem_emb}, one obtains
\begin{prop}
Let $1\leq k<l\leq n$ and $u\in d^l(\Omega)$. Then $u\in d^k(\Omega)$ and $\|u\|_{d^k(\Omega)} \leq \|u\|_{d^l(\Omega)}$. In other words
\[
d^n (\Omega) \subset d^{n-1} (\Omega) \subset \cdots \subset d^{1} (\Omega).
\]
\end{prop}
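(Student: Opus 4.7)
The plan is to transfer the proof of Lemma \ref{lem_emb} verbatim, the only point to verify being that the wedging operation used there preserves the support constraint, i.e.\ sends $\mathcal{D}^{l-1}(\Omega)$ into $\mathcal{D}^{l}(\Omega)$.

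First I would reduce to $k = l-1$ by induction on $l - k$; once the contraction $d^{l}(\Omega) \hookrightarrow d^{l-1}(\Omega)$ is established in one step, iterating it gives the full chain $d^{n}(\Omega) \subset d^{n-1}(\Omega) \subset \dots \subset d^{1}(\Omega)$ with norm $1$ at each stage.

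For the single-step case, fix $u \in d^{l}(\Omega)$ and an arbitrary $\Phi = \sum_{|I|=l-1} \phi_I\, dx^{I} \in \mathcal{D}^{l-1}(\Omega)$. Given a multi-index $I$ with $|I| = l-1$, since $l-1 < n$ there exists $j \in \{1,\dots,n\}\setminus I$, and I set $\tilde{\Phi} := \Phi \wedge dx^{j}$. The key observation is that $\mathrm{supp}\,\tilde{\Phi} \subset \mathrm{supp}\,\Phi$ is a compact subset of $\Omega$, so $\tilde{\Phi} \in \mathcal{D}^{l}(\Omega)$. A direct component count gives $\|\tilde{\Phi}\|_{L^{1}_{l}(\Omega)} \leq \|\Phi\|_{L^{1}_{l-1}(\Omega)}$, and because $d(\Phi \wedge dx^{j}) = d\Phi \wedge dx^{j}$ we also have $\|d\tilde{\Phi}\|_{L^{1}_{l+1}(\Omega)} \leq \|d\Phi\|_{L^{1}_{l}(\Omega)}$, hence $\|\tilde{\Phi}\|_{\Upsilon^{1}_{l}(\Omega)} \leq \|\Phi\|_{\Upsilon^{1}_{l-1}(\Omega)}$. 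One of the components of $\tilde{\Phi}$ is $\pm \phi_{I}\, dx^{I}\wedge dx^{j}$, so applying the definition of $\|\cdot\|_{d^{l}(\Omega)}$ to $\tilde{\Phi}$ yields
\[
|u(\phi_{I})| \leq \|u\|_{d^{l}(\Omega)} \|\tilde{\Phi}\|_{\Upsilon^{1}_{l}(\Omega)} \leq \|u\|_{d^{l}(\Omega)} \|\Phi\|_{\Upsilon^{1}_{l-1}(\Omega)}.
\]
Taking the supremum over $\Phi$ with $\|\Phi\|_{\Upsilon^{1}_{l-1}(\Omega)} \leq 1$ and over $I$ gives $u \in d^{l-1}(\Omega)$ together with $\|u\|_{d^{l-1}(\Omega)} \leq \|u\|_{d^{l}(\Omega)}$.

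There is essentially no obstacle: the construction is local and the support-preservation property of $\wedge\, dx^{j}$ is exactly what makes the Euclidean proof go through on $\Omega$ without modification. The only hypothesis being used is $l \leq n$, which guarantees the existence of the index $j$ needed to extend $\Phi$ to a nontrivial $l$-form.
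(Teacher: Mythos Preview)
Your proof is correct and follows exactly the approach the paper intends: the paper simply states that the proof of Lemma~\ref{lem_emb} goes through verbatim, and your write-up reproduces that argument while making explicit the one point that needs checking on a domain, namely that $\Phi \wedge dx^{j}$ remains in $\mathcal{D}^{l}(\Omega)$ because wedging with $dx^{j}$ does not enlarge the support.
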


In order to show that $W^{1,n}(\Omega)\subset d^{n-1}(\Omega)$, we recall the extension property of Sobolev spaces. It is well-known (see e.g. Theorem 5.24 in \cite{adams2003sobolev}) that if $\Omega$ is a Lipschitz domain then there exists a bounded linear operator $E:W^{l,p}(\Omega)\to W^{l,p}(\mathbb{R}^n)$ such that  $Eu = u$ almost everywhere in $\Omega$ for all $u\in W^{l,p}(\Omega)$. If we consider such an extension $E$ on $W^{1,n}(\Omega)$ and recall (\ref{d_r_and_d}) and Theorem \ref{thm_VS_my_version}, then
\[
\|u\|_{d^{n-1}(\Omega)} = \|Eu|_{\Omega}\|_{d^{n-1}(\Omega)}\leq \|Eu\|_{d^{n-1}(\mathbb{R}^n)} \leq
\]
\[
\leq \|Eu\|_{W^{1,n}(\mathbb{R}^n)} \leq \|E\| \|u\|_{W^{1,n}(\Omega)}.
\]
In other words,
\begin{prop}
If $\Omega$ is a bounded Lipschitz domain, then $W^{1,n}(\Omega)$ is continuously embedded into $d^{n-1}(\Omega)$.
\end{prop}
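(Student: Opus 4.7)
The plan is to reduce the assertion on $\Omega$ to the analogous embedding on all of $\mathbb{R}^n$ (Theorem \ref{thm_VS_my_version}) by way of a Sobolev extension operator. Because $\Omega$ is a bounded Lipschitz domain, the classical Calder\'on--Stein extension theorem (e.g.\ Theorem 5.24 in \cite{adams2003sobolev}) supplies a bounded linear operator $E:W^{1,n}(\Omega)\to W^{1,n}(\mathbb{R}^n)$ with $(Eu)|_\Omega=u$ a.e.\ and $\|Eu\|_{W^{1,n}(\mathbb{R}^n)}\le \|E\|\,\|u\|_{W^{1,n}(\Omega)}$. This is the only ``hard'' ingredient, and it is black-boxed.

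Given $u\in W^{1,n}(\Omega)$, I would set $\tilde u:=Eu\in W^{1,n}(\mathbb{R}^n)$. First, by Theorem \ref{thm_VS_my_version} applied on the whole space,
\[
\|\tilde u\|_{d^{n-1}(\mathbb{R}^n)}\le C\,\|\tilde u\|_{W^{1,n}(\mathbb{R}^n)}.
\]
Second, by the restriction inequality (\ref{d_r_and_d}), which is immediate from the definitions since any test form $\Phi\in\mathcal{D}^{n-1}(\Omega)$ extends by zero to a member of $\mathcal{D}^{n-1}(\mathbb{R}^n)$ with identical $\Upsilon^1_{n-1}$ norm,
\[
\|u\|_{d^{n-1}(\Omega)}=\|\tilde u|_\Omega\|_{d^{n-1}(\Omega)}\le \|\tilde u\|_{d^{n-1}(\mathbb{R}^n)}.
\]
Chaining these two bounds with the extension estimate yields $\|u\|_{d^{n-1}(\Omega)}\le C\|E\|\,\|u\|_{W^{1,n}(\Omega)}$, which is precisely the continuous embedding claimed.

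There is no genuine obstacle here: the proof is a direct composition of three ingredients (extension, the global Van Schaftingen-type embedding, and the trivial restriction bound), each of which is already in hand. The only point requiring any care at all is checking that (\ref{d_r_and_d}) applies in the constant $1$ form stated, but this is essentially a definitional matter and would be remarked on in a single sentence.
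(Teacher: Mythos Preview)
Your proposal is correct and follows essentially the same route as the paper: extend via the Calder\'on--Stein operator (Theorem 5.24 in \cite{adams2003sobolev}), apply Theorem \ref{thm_VS_my_version} on $\mathbb{R}^n$, and restrict using (\ref{d_r_and_d}). The paper's argument is literally the same chain of inequalities.
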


The following result is the analogue of Theorem \ref{main_thm} on Lipschitz domains.
\begin{theorem}
\label{thm_3}
Any $u\in d^1(\Omega)$ is a $bmo_r(\Omega)$ function and
\[
\|u\|_{bmo_r(\Omega)} \leq C \|u\|_{d^1(\Omega)}.
\]
\end{theorem}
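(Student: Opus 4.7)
The plan is to use the duality $bmo_r(\Omega) \cong (h^1_z(\Omega))^{*}$ from Theorem~\ref{thm_bmor_as_dual} together with the density of $\mathcal{D}(\Omega)$ in $h^1_z(\Omega)$ (Lemma~\ref{lem_density_h1_domains}). With these in hand, it suffices to prove the pointwise bound
\[
|u(f)| \leq C\,\|u\|_{d^1(\Omega)}\,\|f\|_{h^1_z(\Omega)} \qquad \forall f \in \mathcal{D}(\Omega),
\]
which identifies $u$ with an element of $bmo_r(\Omega)$ of comparable norm.

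To establish the bound I would imitate the key construction in the proof of Theorem~\ref{main_thm}: for each $f \in \mathcal{D}(\Omega)$, build $n$ differential $1$-forms $\Phi^{j}=\sum_{i}\phi^{j}_{i}\,dx^{i} \in \mathcal{D}^{1}(\Omega)$, $j=1,\dots,n$, satisfying
\[
f = \sum_{i=1}^{n}\phi^{i}_{i}, \qquad \|\Phi^{j}\|_{\Upsilon^{1}_{1}(\Omega)} \leq C\,\|f\|_{h^{1}_{z}(\Omega)},
\]
with $C$ independent of $f$. Granting such a decomposition, the estimate $|u(f)| \leq \sum_{i} |u(\phi^{i}_{i})| \leq nC\,\|u\|_{d^1(\Omega)}\,\|f\|_{h^1_z(\Omega)}$ is immediate from the definition of $d^1(\Omega)$.

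For the construction itself, I would first extend $f$ by zero to $\mathbb{R}^{n}$ and apply the $\mathbb{R}^{n}$ argument from Theorem~\ref{main_thm} to obtain forms $\tilde\Phi^{j}$ with components $\tilde\phi^{j}_{i}=(\mathcal{J}/n-\partial_{i}\partial_{j}\mathcal{J})f$, the diagonal identity $\sum_i \tilde\phi^i_i = f$, and $\|\tilde\Phi^{j}\|_{\Upsilon^{1}_{1}(\mathbb{R}^{n})}\leq C\|f\|_{h^{1}}$. These components are globally supported on $\mathbb{R}^n$, so to produce elements of $\mathcal{D}^{1}(\Omega)$ I would localize: using an atomic/partition-of-unity-style decomposition, reduce to the case where $f$ is supported in a small cube $Q\subset\overline\Omega$, apply the above construction to each piece, truncate the resulting forms by a smooth cutoff contained in $\Omega$ (working in a local Lipschitz boundary chart when $Q$ meets $\partial\Omega$), and then correct the divergence error introduced by the cutoff via a Bogovskii-type solution operator for the divergence equation on the enlarged cube or half-cube.

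The main obstacle is this final localization step. The Bessel-potential construction underlying Theorem~\ref{main_thm} is inherently nonlocal, so a direct cutoff of $\tilde\Phi^{j}$ destroys the identity $f=\sum_{i}\phi^{i}_{i}$; the Bogovskii correction is needed to restore it, and must be controlled uniformly in the $\Upsilon^{1}_{1}$ norm for pieces of $f$ localized arbitrarily close to $\partial\Omega$. This uniformity is precisely where the Lipschitz regularity of $\partial\Omega$ enters essentially, through the bounded behavior of the Bogovskii operator and the change of variables in boundary charts.
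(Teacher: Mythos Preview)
Your overall framework—duality $bmo_r(\Omega)\cong(h^1_z(\Omega))^*$, density of $\mathcal{D}(\Omega)$, and the goal of building $1$-forms $\Phi^j$ with $f=\sum_i\phi^i_i$ and $\|\Phi^j\|_{\Upsilon^1_1}\lesssim\|f\|_{h^1}$—matches the paper exactly. The divergence is in how the forms are produced. The paper does \emph{not} try to localize the Bessel-potential construction from Theorem~\ref{main_thm}. Instead it first splits $f=g+\theta$ with $\theta=(\int f)\psi$ for a fixed $\psi\in\mathcal{D}(\Omega)$, handling $\theta$ trivially via $\Theta=\sum_i\theta\,dx^i$. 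For the mean-zero part $g$, it uses Lemma~\ref{lem_H1_h1} to place $g\in H^1_z(\Omega)$, then (Lemma~\ref{lem_temp2}) runs the atomic decomposition and applies the Ne\v cas/Bogovski\u\i\ result (Lemma~\ref{lem_auscher}) \emph{on each atom's cube} to solve $\operatorname{div}F=g$ with $F\in W^{1,1}_0(\Omega;\mathbb{R}^n)$ and $\|DF\|_{L^1}\leq C\|g\|_{H^1}$. The forms are then $\Phi^j=\sum_i\partial_iF_j\,dx^i\in\Upsilon^1_{1,0}(\Omega)$, which are \emph{closed} by construction, so $\|\Phi^j\|_{\Upsilon^1_1}=\|\Phi^j\|_{L^1_1}$ and no correction step is needed at all.

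Your proposed route has a genuine gap at the localization step. If the cutoff $\chi$ equals $1$ on $\operatorname{supp}f$, then $\sum_i\chi\tilde\phi^i_i=\chi f=f$ and the diagonal identity is \emph{not} destroyed; what the cutoff damages is the bound on $d(\chi\tilde\Phi^j)=d\chi\wedge\tilde\Phi^j+\chi\,d\tilde\Phi^j$, and the first term scales like $\|\nabla\chi\|_\infty\|\tilde\Phi^j\|_{L^1}$, which blows up as the atom's cube shrinks toward $\partial\Omega$. A Bogovski\u\i\ operator solves $\operatorname{div}G=h$, not a problem of the form ``restore control on $d\Phi^j$,'' so the correction you invoke does not fit the error you actually face. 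Conversely, if $\chi$ is fixed and not identically $1$ on $\operatorname{supp}f$, the remainder $(1-\chi)f$ must itself be written as a divergence with $W^{1,1}_0$ bounds—at which point you are back to solving $\operatorname{div}F=g$ directly, and the Bessel detour has bought nothing. The paper's approach recognizes this and goes straight to the divergence equation; its advantage is that the resulting forms are closed, everything stays inside $\Omega$, and the only place Lipschitz regularity enters is the uniform constant in Lemma~\ref{lem_auscher}.
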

The proof is more technical than the one of Theorem \ref{main_thm} because of the presence of $\partial \Omega$. Firstly, we state a corollary of the Ne\v cas inequality:
\[
\|f\|_{L^2(\Omega)} \leq C( \|f\|_{W^{-1,2}(\Omega)} + \|\nabla f\|_{W^{-1,2}(\Omega)}) \forall f\in L^2(\Omega).
\]

\begin{lem} [\cite{auscher2005hardy}, Lemma 10]
\label{lem_auscher}
Let $\Omega$ be a bounded Lipschitz domain in $\mathbb{R}^n$. If $g\in L^2(\Omega)$ and $\int g  = 0$, then there exists a vector-valued function $F\in W^{1,2}_0(\Omega, \mathbb{R}^n)$ such that
\[
\left\{
\begin{array}{l}
    \rm{div} F = g, \text{ in } \Omega   \\
    \|DF\|_{L^2} \leq C\|g\|_2.
\end{array}
\right.
\]
Here $DF$ is a matrix $\partial_j F_i$ and $C>0$ depends only on the Lipschitz constant of $\Omega$.
\end{lem}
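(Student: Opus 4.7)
The plan is to deduce this from the Ne\v cas inequality stated just above, via a closed-range argument for the divergence operator. Write $L^2_0(\Omega) := \{g\in L^2(\Omega):\int_\Omega g=0\}$ and consider the bounded linear map
\[
T := \mathrm{div}: W^{1,2}_0(\Omega, \mathbb{R}^n) \to L^2_0(\Omega), \qquad F \mapsto \partial_1 F_1 + \cdots + \partial_n F_n,
\]
which indeed lands in $L^2_0$ by the divergence theorem (since $F$ has zero boundary values). The statement of the lemma is equivalent to saying that $T$ is surjective and admits a bounded right inverse. Integration by parts identifies the Hilbert-space adjoint of $T$ with $-\nabla:L^2_0(\Omega)\to W^{-1,2}(\Omega,\mathbb{R}^n)$, once $W^{1,2}_0$ is equipped with the equivalent inner product $\langle DF,DG\rangle_{L^2}$ (Poincar\'e, with constant depending only on the Lipschitz character). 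By Banach's closed range theorem, it therefore suffices to establish the bound-below estimate
\[
\|p\|_{L^2(\Omega)} \leq C\|\nabla p\|_{W^{-1,2}(\Omega,\mathbb{R}^n)} \qquad \forall p\in L^2_0(\Omega),
\]
with $C$ depending only on the Lipschitz constant of $\Omega$.

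I would prove the displayed bound-below estimate by a compactness argument that combines the Ne\v cas inequality with the compact embedding $L^2(\Omega)\hookrightarrow W^{-1,2}(\Omega)$. Suppose for contradiction that it fails; then there exist $p_m\in L^2_0(\Omega)$ with $\|p_m\|_{L^2}=1$ and $\|\nabla p_m\|_{W^{-1,2}}\to 0$. By Rellich's theorem a subsequence converges in $W^{-1,2}(\Omega)$, and Ne\v cas's inequality applied to the differences,
\[
\|p_m-p_k\|_{L^2}\leq C\bigl(\|p_m-p_k\|_{W^{-1,2}}+\|\nabla(p_m-p_k)\|_{W^{-1,2}}\bigr),
\]
forces $\{p_m\}$ to be Cauchy in $L^2$, with some limit $p\in L^2_0$ of unit norm. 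But $\nabla p_m\to 0$ in $W^{-1,2}$, so $\nabla p = 0$ in $\mathcal{D}'(\Omega)$; thus $p$ is a constant, and $\int_\Omega p=0$ forces $p\equiv 0$, contradicting $\|p\|_{L^2}=1$.

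With the estimate in hand, $-\nabla$ is injective on $L^2_0$ (zero gradient plus mean zero yields $p=0$) and has closed range in $W^{-1,2}(\Omega,\mathbb{R}^n)$. The closed range theorem then yields $\mathrm{Range}(T)=L^2_0(\Omega)$ together with a bounded right inverse $S:L^2_0(\Omega)\to W^{1,2}_0(\Omega,\mathbb{R}^n)$; setting $F:=Sg$ gives $\mathrm{div}\,F=g$ and $\|DF\|_{L^2}\leq C\|g\|_{L^2}$ by the Poincar\'e-based equivalence of norms. The main obstacle in this program is the compactness step, which \emph{a priori} only yields a constant $C=C(\Omega)$; recovering dependence solely on the Lipschitz constant of $\Omega$ requires tracing through a proof of Ne\v cas's inequality with explicit Lipschitz dependence, or appealing to a scaling plus finite covering of $\Omega$ by rescaled star-shaped Lipschitz pieces of comparable size and Lipschitz character.
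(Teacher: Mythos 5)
Your argument is correct and is exactly the route the paper points to: the paper does not prove this lemma but quotes it from Auscher--Russ--Tchamitchian (Lemma 10) as ``a corollary of the Ne\v cas inequality,'' and your closed-range/Peetre--Tartar derivation (using Rellich compactness to upgrade the Ne\v cas estimate to $\|p\|_{L^2}\leq C\|\nabla p\|_{W^{-1,2}}$ on $L^2_0(\Omega)$, then dualizing to get surjectivity of $\mathrm{div}$ with a bounded right inverse) is the standard proof of that cited result, with connectedness of the domain correctly used to kill the limit $p$. The one caveat you rightly flag --- that the compactness step only gives $C=C(\Omega)$ rather than dependence on the Lipschitz character alone --- is real but immaterial here, since the paper applies the lemma only on the fixed cubes/domains arising in Lemma \ref{lem_temp2}, where a scaling argument on cubes gives the uniform constant.
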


Using this lemma we prove the following
\begin{lem}
\label{lem_temp2}
Let $\Omega$ be a bounded Lipschitz domain in $\mathbb{R}^n$. If $g\in H^1_z(\Omega)$, then there exists a vector-valued function $F\in W^{1,1}_0(\Omega, \mathbb{R}^n)$ such that
\[
\left\{
\begin{array}{l}
    \rm{div} F = g, \text{ in } \Omega   \\
    \|DF\|_{L^1} \leq C\|g\|_{H^1}.
\end{array}
\right.
\]
\end{lem}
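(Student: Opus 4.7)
The plan is to reduce to atoms via Theorem \ref{thm_CKS_H1_atoms} and solve the divergence equation atom-by-atom using the $L^2$ version in Lemma \ref{lem_auscher}, then sum up. Since $g\in H^1_z(\Omega)$, decompose $g=\sum_k\lambda_k a_k$ in $\mathcal{D}'(\Omega)$, where each $a_k$ is an $H^1_z(\Omega)$ atom supported in a cube $Q_k\subset\Omega$ with $\|a_k\|_{L^2}\leq|Q_k|^{-1/2}$, $\int_{Q_k}a_k=0$, and $\sum_k|\lambda_k|\leq C\|g\|_{H^1}$.

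For each $k$, view the cube $Q_k$ as a Lipschitz domain (with Lipschitz constant independent of $|Q_k|$) and apply Lemma \ref{lem_auscher} to $a_k$ on $Q_k$ to obtain $F_k\in W^{1,2}_0(Q_k,\mathbb{R}^n)$ with $\operatorname{div}F_k=a_k$ and $\|DF_k\|_{L^2(Q_k)}\leq C\|a_k\|_{L^2(Q_k)}$, where $C$ can be taken independent of $k$ by the usual scaling argument (if $F$ solves the problem on the unit cube with right-hand side $h$, then $\tilde F(y)=\lambda F(y/\lambda)$ solves it on $Q_\lambda=[0,\lambda]^n$ with right-hand side $h(\cdot/\lambda)$, and $\|D\tilde F\|_{L^2(Q_\lambda)}/\|h(\cdot/\lambda)\|_{L^2(Q_\lambda)}=\|DF\|_{L^2(Q_1)}/\|h\|_{L^2(Q_1)}$). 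Extend $F_k$ by zero to all of $\Omega$, which stays in $W^{1,1}_0(\Omega,\mathbb{R}^n)$ because $F_k\in W^{1,2}_0(Q_k,\mathbb{R}^n)$ and $Q_k\subset\Omega$. By Cauchy--Schwarz,
\[
\|DF_k\|_{L^1(\Omega)}=\|DF_k\|_{L^1(Q_k)}\leq|Q_k|^{1/2}\|DF_k\|_{L^2(Q_k)}\leq C|Q_k|^{1/2}\|a_k\|_{L^2(Q_k)}\leq C,
\]
where the last bound uses the atomic size condition $\|a_k\|_{L^2}\leq|Q_k|^{-1/2}$.

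Set $F:=\sum_k\lambda_k F_k$. Since $\sum_k|\lambda_k|\|DF_k\|_{L^1(\Omega)}\leq C\sum_k|\lambda_k|\leq C'\|g\|_{H^1}$, the partial sums form a Cauchy sequence in $W^{1,1}_0(\Omega,\mathbb{R}^n)$ (using also the Poincaré inequality to control $\|F_k\|_{L^1}$ by $\|DF_k\|_{L^1}$, as $F_k\in W^{1,1}_0(\Omega,\mathbb{R}^n)$ is supported in the bounded set $\overline\Omega$), so $F\in W^{1,1}_0(\Omega,\mathbb{R}^n)$ with $\|DF\|_{L^1}\leq C\|g\|_{H^1}$. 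Finally, for $\varphi\in\mathcal{D}(\Omega)$, continuity of the pairing with $\varphi$ in $W^{1,1}_0$ yields $\langle\operatorname{div}F,\varphi\rangle=-\sum_k\lambda_k\int_{Q_k}F_k\cdot\nabla\varphi=\sum_k\lambda_k\langle a_k,\varphi\rangle=\langle g,\varphi\rangle$, so $\operatorname{div}F=g$ in $\Omega$.

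The main obstacle, and the reason the proof is not entirely mechanical, is to ensure that the constant $C$ from Lemma \ref{lem_auscher} applied on each cube $Q_k$ can be chosen independent of $|Q_k|$; this is handled by the scaling argument above, together with the observation that the Lipschitz constant of a cube is the same for all cubes. A secondary point is checking that the extension of $F_k$ by zero from $Q_k$ to $\Omega$ lies in $W^{1,1}_0(\Omega,\mathbb{R}^n)$ with the correct divergence, which follows because $F_k$ has zero trace on $\partial Q_k$, so no singular measure arises on $\partial Q_k$ when one differentiates.
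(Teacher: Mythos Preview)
Your proof is correct and follows essentially the same approach as the paper: atomic decomposition via Theorem~\ref{thm_CKS_H1_atoms}, solve the divergence equation on each cube using Lemma~\ref{lem_auscher}, extend by zero, and sum. You add a couple of details the paper glosses over (the scaling argument ensuring the constant from Lemma~\ref{lem_auscher} is uniform in $|Q_k|$, and the use of Poincar\'e to get convergence of the full $W^{1,1}_0$-norm rather than just $\|DF_k\|_{L^1}$), but the strategy is identical.
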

\begin{proof}
Let $g\in H^1_z(\Omega)$. Then by Theorem \ref{thm_CKS_H1_atoms}, it can be decomposed into $H^1_z(\Omega)$ atoms $a_i\in L^2(\mathbb{R}^n)$  as
\[
g = \sum\limits_{i=1}^\infty \lambda_i a_i
\]
and
\[
\sum\limits_{i=1}^\infty |\lambda_i| \leq 2 \|g\|_{H^1}.
\]
For each $i\geq 1$, by means of Lemma \ref{lem_auscher}, we can find  $V^i\in W^{1,2}_0(Q_i, \mathbb{R}^n)$, such that
\[
\left\{
\begin{array}{l}
    \rm{div} V^i = a_i  \text{ in } Q_i   \\
    \|DV^i\|_{L^2} \leq C\|a_i\|_{L^2}.
\end{array}
\right.
\]

As $W^{1,2}_0(Q_i)$ fields, $V^i$ can be continuously extended by 0 to $W^{1,2}(\Omega)$. We denote these extensions by the same $V^i$. We claim that $F  = \sum\limits_{i=1}^\infty \lambda_i V^i$ is the solution we seek.

Indeed, since $a_i$ are atoms, we have
\[
\|D V^i\|_{L^1} \leq |Q_i|^{1/2} \|D V^i\|_{L^2} \leq C |Q_i|^{1/2} \|a_i\|_{L^2} \leq C_1 \text{ for all } i\geq 1.
\]
Therefore, the partial sums $\sum_{i=1}^N \lambda_i V^i$, supported in $\Omega$, converge to an element $F$ of $W^{1,1}_0(\Omega, \mathbb{R}^{n\times n})$ and
\[
\|DF\|_{L^1} \leq C_1 \sum_i |\lambda_i| \leq  C\|g\|_{H^1}.
\]
Finally, by the construction of $F$,
\[
\textrm{div} F = \sum_i \lambda_i \cdot \textrm{div} V^i  = \sum_i \lambda_i a_i = g.
\]
\end{proof}

Now we can prove the last theorem of this section

\begin{proof} [Proof of Theorem \ref{thm_3}]
We will use the duality between $h^1_z(\Omega)$ and $bmo_r(\Omega)$ asserted by Theorem \ref{thm_bmor_as_dual}.
By Lemma \ref{lem_density_h1_domains}, it is enough to show that for any $f\in  \mathcal{D}(\Omega)$ and $u\in d^1(\Omega)$
\begin{equation}
\label{sq_0}
|u(f)|\leq C \|u\|_{d^1} \|f\|_{h^1}.
\end{equation}
Given $f\in \mathcal{D}(\Omega)$, we write $f$ as the sum $f=g+\theta$, where
\[
g = f - \int f(x) dx\cdot  \psi,
\]
\[
\theta = \int f(x) dx \cdot \psi,
\]
where $\psi\in \mathcal{D}(\Omega) $ is any function with $\int \psi = 1$.

Note that $\theta\in \mathcal{D}(\Omega)$ with $\|\theta\|_{h^1} \leq \|\psi\|_{L^1} \|f\|_{h^1}$ and  $\|\theta\|_{W^{1,1}} \leq \|f\|_{h^1} \|\psi\|_{W^{1,1}}$. Moreover if we define  $\Theta = \sum\limits_{i=1}^n \theta dx^i \in \mathcal{D}^1(\Omega)$, then $\|\Theta\|_{\Upsilon^1_{1}(\Omega)} \leq C \|\psi\|_{W^{1,1}} \|f\|_{h^1}$. Therefore
\begin{equation}
\label{sq_1}
|u(\theta)| \leq \|u\|_{d^1(\Omega)} \|\Theta\|_{\Upsilon^1_{1}(\Omega)} \leq C_\psi \|u\|_{d^1(\Omega)} \|f\|_{h^1}.
\end{equation}

On the other hand, for $g\in \mathcal{D}(\Omega)$, we recall Lemma \ref{lem_H1_h1} to see that $g\in H^1_z(\Omega)$ and
\begin{equation}
\label{sd}
\|g\|_{H^1} \leq C_{\Omega} \|g\|_{h^1}\leq C'_\psi \|f\|_{h^1}.
\end{equation}
Hence,  Lemma \ref{lem_temp2} is applicable and there exists $F\in W^{1,1}_0(\Omega; \mathbb{R}^n)$ such that
\[
\left\{
\begin{array}{l}
    \rm{div} F = g, \text{ in } \Omega   \\
    \|D F\|_{L^1(\Omega; \mathbb{R}^{n\times n})} \leq C \|g\|_{H^1}.
\end{array}
\right.
\]
Using this $F$, we introduce $n$ differential forms
\[
\Phi^j = \sum\limits_{i=1}^n \partial_i F_j dx^i
\]
and claim that all $\Phi^j\in \Upsilon^1_{1,0}(\Omega)$ and $\|\Phi^j\|_{\Upsilon^1_{1}(\Omega)} \leq C'_\psi \|f\|_{h^1}$ (recall that $\Upsilon^1_{k,0}(\Omega) = \overline{\mathcal{D}^k(\Omega)}$ where the closure is taken with respect to the $\Upsilon^1_k$ norm). Assuming the claim and recalling that $u$ is well defined on components of $\Upsilon^1_{1,0}(\Omega)$ forms (see Remark \ref{rem_d_local_def}), one has
\begin{equation}
\label{sq_2}
|u(g)| = |u(\sum\limits_{i=1}^n \partial_i F_i) | \leq \sum\limits_{i,j=1}^n |u(\partial_i F_j)| \leq
\end{equation}
\[
\leq n \|u\|_{d^1(\Omega)}  \max_{1\leq j \leq n} \|\Phi^j\|_{\Upsilon^1_1(\Omega)} \leq C \|u\|_{d^1(\Omega)}  \|f\|_{h^1}.
\]
We complete the proof by deducing (\ref{sq_0}) from (\ref{sq_1}), (\ref{sq_2}) and the triangle inequality.

In order to prove the claim, we note that $d\Phi^j=0$ by construction and all components of $\Phi^j$ are $L^1(\Omega)$ functions, bounded in the $L^1$-norm by a multiple of $\|g\|_{H^1}$. Recalling (\ref{sd}), we may conclude that
\[
\|\Phi^j\|_{\Upsilon^1_1(\Omega)} = \|\Phi^j\|_{L^1_1(\Omega)} \leq C \|f\|_{h^1}.
\]
Furthermore,  $F_j\in W^{1,1}_0(\Omega)$ for $j=1,\dots, n$, which means that there exist sequences $\{F^m_j\}_{m=1}^\infty \subset \mathcal{D}(\Omega)$ such that $\|\partial_i F^m_j - \partial_i F_j\|_{L^1(\Omega)} \to 0$, as $m\to \infty$. Hence, by forming  closed $\mathcal{D}^1(\Omega)$-forms
\[
\Phi^{j,m} = \sum\limits_{i=1}^n \partial_i F^m_j dx^i,
\]
we can construct $\mathcal{D}^1(\Omega)$ approximations of $\Phi^j$, such that as  $m\to \infty$,
\[
\|\Phi^{j,m} - \Phi^{j}\|_{\Upsilon^1_1(\Omega)}  = \|\Phi^{j,m} - \Phi^{j}\|_{L^1_1(\Omega)} \to 0,
\]
which shows that $\Phi^j\in \Upsilon^1_{1,0}(\Omega)$ for $j=1,\dots, n$.
\end{proof}

\section{$d^k$ spaces on Riemannian manifolds}
Let $(M,g)$ be a complete Riemannian manifold. Then $\exp_p$ is defined on $T_p M$ and, as mentioned earlier, for sufficiently small $r_p>0$, maps $B_{r_p}(0)\in T_p M$ diffeomorphically onto an open subset of $M$.
Let us denote by $\textrm{inj}_M(p)$, the supremum of all such $r_p>0$ and define the injectivity radius of $M$ as
\[
\textrm{inj}_M := \inf\{\textrm{inj}_M(p): p\in M\}.
\]
\begin{definition}
A Riemannian manifold $(M,g)$ is called a manifold with bounded geometry if
\begin{enumerate}
    \item $M$ is complete and connected;
    \item $\textrm{inj}_M >0$;
    \item For every multi-index $\alpha$, there exists $C_\alpha>0$ such that $|D^\alpha g_{i,j}| \leq C_\alpha$ in the normal geodesic coordinates $(\Omega_p(r_p), \exp^{-1}_p)$.
\end{enumerate}
\end{definition}

Examples of manifolds with  bounded geometry include compact  Riemannian manifold, $\mathbb{R}^n$ and $\mathbb{H}^n$ (see e.g. \cite{eldering2013normally}).

\subsection{Tame partition of unity}
Let $(M,g)$ be a Riemannian manifold with bounded geometry. For $\delta\in (0,\textrm{inj}_M)$, we denote by $\Omega_\delta(p)$, the image $B_\delta(0)$ by the map $\exp_p$ which is called a geodesic ball with radius $\delta$ centered at $p$.
\begin{prop}[\cite{Triebel_Function_spaces_2} p. 284]
 For sufficiently small $\delta>0$ there exists a uniformly locally finite covering of $M$ by a sequence of geodesic balls $\{\Omega_\delta(p_j)\}_{j\in \mathbb{Z}_+}$ and a corresponding smooth partition of unity $\{\psi_j\}_{j\in \mathbb{Z}_+}$ subordinate to $\{\Omega_\delta(p_j)\}_{j\in \mathbb{Z}_+}$.
\end{prop}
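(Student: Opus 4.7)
The plan is to construct the covering by a greedy packing argument and then build the partition of unity by normalizing pullbacks of a fixed Euclidean bump function via the exponential map. Fix $\delta\in(0,\mathrm{inj}_M/4)$, and use Zorn's lemma (or an iterative construction, since bounded geometry implies $M$ is separable) to select a maximal collection $\{p_j\}_{j\in\mathbb{Z}_+}\subset M$ with the property that the geodesic balls $\Omega_{\delta/2}(p_j)$ are pairwise disjoint. Maximality forces $M=\bigcup_j\Omega_\delta(p_j)$: if some $q\in M$ had $d(q,p_j)\geq\delta$ for every $j$, then $\Omega_{\delta/2}(q)$ could be adjoined, contradicting maximality.

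For uniform local finiteness, the key is to bound, independently of $j$, the number of indices $k$ for which $\Omega_\delta(p_k)\cap\Omega_\delta(p_j)\neq\emptyset$. For any such $k$, the ball $\Omega_{\delta/2}(p_k)$ lies inside $\Omega_{5\delta/2}(p_j)$, and all these balls are pairwise disjoint. In the normal geodesic coordinates around any $p\in M$, condition (3) of bounded geometry gives uniform $C^0$ bounds on $g_{ij}$ and $g^{ij}$, so the Riemannian volume form is comparable to Lebesgue measure with constants independent of $p$. This yields $c_1\delta^n\leq\mathrm{vol}(\Omega_{\delta/2}(p_k))\leq c_2\delta^n$ uniformly, and dividing $\mathrm{vol}(\Omega_{5\delta/2}(p_j))$ by the uniform lower bound produces a uniform upper bound $N=N(\delta)$ on the cardinality.

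To construct the partition of unity, fix a nonnegative $\eta\in\mathcal{D}(\mathbb{R}^n)$ with $\eta\equiv 1$ on $B_{\delta/2}(0)$ and $\mathrm{supp}\,\eta\subset B_\delta(0)$. Put $\tilde\psi_j:=\eta\circ\exp_{p_j}^{-1}$ on $\Omega_\delta(p_j)$ and extend by zero. Since condition (3) also controls derivatives of $g_{ij}$ uniformly in the base point, the derivatives $D^\alpha\tilde\psi_j$ in normal coordinates are bounded uniformly in $j$. The sum $\Sigma:=\sum_j\tilde\psi_j$ satisfies $1\leq\Sigma\leq N$ (lower bound by the covering property, upper bound by local finiteness), and its derivatives are uniformly bounded because at each point only at most $N$ summands are nonzero. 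Setting $\psi_j:=\tilde\psi_j/\Sigma$ gives a smooth partition of unity subordinate to $\{\Omega_\delta(p_j)\}$.

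The main obstacle is obtaining \emph{uniform} constants throughout the argument—uniform volume comparison, uniform $N$, and uniform bounds on derivatives of $\psi_j$. Each of these rests squarely on bounded geometry: condition (2) prevents the normal charts from collapsing and allows $\delta$ to be chosen once for all of $M$, while condition (3) supplies the uniform metric estimates used both in the volume comparison and in differentiating compositions with $\exp_{p_j}^{-1}$.
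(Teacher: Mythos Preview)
The paper does not supply its own proof of this proposition; it is quoted as a known result from Triebel's book (p.~284). Your argument is correct and is precisely the standard construction one finds there and elsewhere: a maximal $\delta/2$-separated net gives the covering, a volume-comparison count (using the uniform metric bounds from condition~(3)) gives uniform local finiteness, and normalizing pullbacks of a fixed Euclidean bump yields the partition of unity with uniformly bounded derivatives. One tiny remark: separability of $M$ already follows from $M$ being a connected Riemannian manifold (hence second countable), not from bounded geometry per se, so the iterative selection needs no extra hypothesis.
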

Such covering and partition of unity we will call following Taylor \cite{taylor2009hardy}, a tame covering and a tame partition of unity.

\subsection{$W^{s,p}(M)$, $h^1(M)$ and $\textrm{bmo}(M)$}
\begin{definition} [\cite{Triebel_Function_spaces_2}, Chapter 7]
Let $(M,g)$ be a Riemannian manifold with bounded geometry and let $\{\psi_j\}$ be a tame partition of unity subordinate to a tame cover by geodesic balls $\{\Omega_\delta(p_j)\}$. The Sobolev space $W^{s,p}(M)$, $1<p<\infty$, $s>0$ is defined as \[
W^{s,p}(M) = \{f\in \mathcal{D}'(M):  \sum\limits_{j\in \mathbb{Z}_+} \|\psi_j f\circ \exp_{p_j}\|^p_{W^{s,p}(\mathbb{R}^n)}<\infty \}
\]
\end{definition}

Taylor in \cite{taylor2009hardy}, introduced versions of Hardy spaces and $bmo$ on manifolds with bounded geometry. One way to define $h^1(M)$ is as follows:
\begin{definition} [\cite{taylor2009hardy} Corollary 2.4]
Let $f\in \mathcal{D}'(M)$ and $\{\psi_j\}$ a tame partition of unity subordinate to a tame cover by geodesic balls $\{\Omega_\delta(p_j)\}$. We say that $f\in h^1(M)$ if $
\sum_j \|(\psi_j f) \circ \exp_{p_j}\|_{h^1(\mathbb{R}^n)} < \infty$.
We equip the space $h^1(M)$ with the norm
\[
\|f\|_{h^1(M)} = \sum_j \|(\psi_j f) \circ \exp_{p_j}\|_{h^1(\mathbb{R}^n)}.
\]
\end{definition}

The space $bmo(M)$ is defined similarly
\begin{definition}[\cite{taylor2009hardy} Corollary 3.4]
Let $f\in L^1_{loc}(M)$ and $\{\psi_j\}$ a tame partition of unity subordinate to a tame cover by geodesic balls $\{\Omega_\delta(p_j)\}$. We say that $f\in bmo(M)$ if $
\sum_j \|(\psi_j f) \circ \exp_{p_j}\|_{bmo(\mathbb{R}^n)} < \infty$.
We equip the space $bmo(M)$ with the norm
\[
\|f\|_{bmo(M)} = \sum_j \|(\psi_j f) \circ \exp_{p_j}\|_{bmo(\mathbb{R}^n)}.
\]
\end{definition}

\begin{remark}
All these classes of functions have equivalent global definitions. However, for our purposes it is more convenient to use the introduced versions. We refer to \cite{taylor2009hardy}, \cite{aubin1982nonlinear} and \cite{Triebel_Function_spaces_2} for alternative definitions and the proofs of their equivalence.
\end{remark}

\subsection{$d^k(M)$ spaces and the embedding into $bmo(M)$}

\begin{definition}
Let $\{\psi_j\}$ be a tame partition of unity subordinate to a tame cover by geodesic balls $\{\Omega_\delta(p_j)\}$. We say that $u\in \mathcal{D}'(M)\in d^k(M)$ if for each $j$, $(\psi_j u) \circ \exp_{p_j} \in d^k(\mathbb{R}^n)$ and
\[
\|u\|_{d^k(M)} := \sum_j \|(\psi_j u) \circ \exp_{p_j}\|_{d^k(\mathbb{R}^n)} < \infty.
\]
\end{definition}

We complete this part with the result which immediately follows from the definitions of the spaces $W^{1,n}(M)$, $d^k(M)$, $bmo(M)$ and the results of Section 3.2: Lemma 3.2.4 and Theorems 3.2.5, 3.2.6.

\begin{theorem}
\label{thm_on_manifolds}
Let $M$ be the Riemannian manifold with bounded geometry. Then the following continuous embeddings are true
\[
W^{1,n}(M) \subset d^{n-1}(M) \subset \dots \subset d^1(M) \subset bmo(M)
\]
\end{theorem}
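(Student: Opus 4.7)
The strategy is to deduce each link of the embedding chain on $M$ from its $\mathbb{R}^n$ counterpart in Section 3 by applying the latter termwise through the tame partition of unity. Fix a tame cover $\{\Omega_\delta(p_j)\}_{j \in \mathbb{Z}_+}$ of $M$ with subordinate tame partition $\{\psi_j\}$, and for $u \in \mathcal{D}'(M)$ set $v_j := (\psi_j u) \circ \exp_{p_j}$. Each $v_j$ is a distribution on $\mathbb{R}^n$ supported in the fixed ball $B_\delta(0)$, with $\delta$ independent of $j$. By the definitions adopted in Section 5.2, the membership and norm of $u$ in each of $W^{1,n}(M)$, $d^k(M)$, and $bmo(M)$ are governed by summability of the corresponding $\mathbb{R}^n$-norms of the family $\{v_j\}$, so the global inclusions reduce to inequalities of the form $\|v_j\|_{X(\mathbb{R}^n)} \leq C\|v_j\|_{Y(\mathbb{R}^n)}$ with $C$ uniform in $j$.

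For the intermediate links $d^l(M) \subset d^{l-1}(M)$ with $2 \leq l \leq n-1$, applying Lemma \ref{lem_emb} to each $v_j$ yields $\|v_j\|_{d^{l-1}(\mathbb{R}^n)} \leq \|v_j\|_{d^l(\mathbb{R}^n)}$; termwise summation then gives $\|u\|_{d^{l-1}(M)} \leq \|u\|_{d^l(M)}$. For the terminal link $d^1(M) \subset bmo(M)$, Theorem \ref{main_thm} applied to each $v_j$ delivers $\|v_j\|_{bmo(\mathbb{R}^n)} \leq C\|v_j\|_{d^1(\mathbb{R}^n)}$ with $C$ absolute, and summation produces $\|u\|_{bmo(M)} \leq C\|u\|_{d^1(M)}$. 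For the first link $W^{1,n}(M) \subset d^{n-1}(M)$, I would apply Theorem \ref{thm_VS_my_version} to each $v_j$ to obtain $\|v_j\|_{d^{n-1}(\mathbb{R}^n)} \leq C\|v_j\|_{W^{1,n}(\mathbb{R}^n)}$ uniformly in $j$, then sum.

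The main substantive point I anticipate is the reconciliation of summation conventions in the Sobolev step: the natural norm on $W^{1,n}(M)$ involves an $\ell^n$-type sum of local $W^{1,n}(\mathbb{R}^n)$ norms, whereas $d^{n-1}(M)$ is defined via an $\ell^1$-type sum of local $d^{n-1}(\mathbb{R}^n)$ norms. The reconciliation uses the uniform compact support of each $v_j$ in the fixed bounded set $B_\delta(0)$ together with the uniform local finiteness of the tame cover (only boundedly many $\psi_j$ are nonzero at any point of $M$), so that one can reorganize the partition of unity and invoke a Hölder-type comparison on $B_\delta(0)$ to absorb the discrepancy into the embedding constant. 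Once this is verified, the three termwise estimates combine into the asserted chain $W^{1,n}(M) \subset d^{n-1}(M) \subset \cdots \subset d^1(M) \subset bmo(M)$, with the remaining inclusions following formally from the $\mathbb{R}^n$-level results.
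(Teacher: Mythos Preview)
Your proposal is precisely the paper's approach: the paper's proof consists of a single sentence stating that the result follows immediately from the local definitions of $W^{1,n}(M)$, $d^k(M)$, $bmo(M)$ together with Lemma~\ref{lem_emb} and Theorems~\ref{thm_VS_my_version} and~\ref{main_thm}, i.e., exactly the termwise transfer through the tame partition that you describe. The $\ell^n$-versus-$\ell^1$ discrepancy you flag in the Sobolev link is a genuine point that the paper does not address either; your instinct to invoke uniform local finiteness is the right one, though making it rigorous for noncompact $M$ requires care, and may ultimately amount to reading the outer aggregation in the definitions of $d^k(M)$ and $bmo(M)$ as a supremum over $j$ (consistent with Taylor's original characterization of $bmo(M)$) rather than the literal $\sum_j$ written here.
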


\section{Acknowledgement}
The author is very grateful to Galia Dafni for careful reading of the manuscript and many useful discussions concerning it. He would also like to show his gratitude to the referee for valuable suggestions and comments that considerably helped to improve the quality of this paper.

\bibliography{references}
\bibliographystyle{amsplain}

\address{Department of Mathematics and Statistics, Concordia University, Montreal, QC H3G 1M8, Canada}\\
\email{almaz.butaev@concordia.ca}

\end{document}